\documentclass[12pt,a4paper]{article}
\RequirePackage{amsmath,amsthm}
\RequirePackage[colorlinks,citecolor=blue,urlcolor=blue,pagebackref=true]{hyperref}
\RequirePackage[T1]{fontenc}
\usepackage[british,french]{babel}
\usepackage{amssymb,math,locenglish}
\usepackage{fourier}
\renewcommand{\cut}[1]{}

\usepackage[usenames]{color}
\usepackage{tikz}

%\usepackage{ulem}

%\usepackage[T1]{fontenc}
%\usepackage[pdftex,pdftitle={Continuous time Markov chains},pdfauthor={Dimitri Petritis},pdfsubject={Markov chains},pdfkeywords={Markov chains, continuous time, semi-martingale approach},hyperfigures=true,pagebackref=true,backref=true,colorlinks=true,pdfpagemode=UseOutilines,
%linkcolor=refcolor,
%anchorcolor=anchorcolor,
%citecolor=citecolor,pdfview=FitB]{hyperref}

%\textheight=23cm
%\textwidth=16cm
%\voffset=-2cm
%\hoffset=-1.5cm
%\parskip=4mm
\newcommand{\LastUpdate}{\eurtoday}

\begin{document}
%\begin{frontmatter}

\begin{center}
\baselineskip=5mm
\textbf
{\Large
{%\textsf
{Explosion, implosion, and moments of passage times\\ for continuous-time Markov chains:\\
\vskip3mm
a semimartingale approach}}
 }
 \footnote{
\textit{1991 Mathematics Subject Classification:}
60J10, 60K20.\\
%{\em 1990 Physics and Astronomy Classification Scheme:} \\
\textit{Key words and phrases:}
Continuous-time Markov chain,
recurrence criteria,
explosion criteria, 
moments of passage times,
implosion.
}
\vskip1cm
\parbox[t]{14cm}{
Mikhail {\sc  Menshikov}$^{a}$ and Dimitri {\sc Petritis}$^b$\\
\vskip5mm   
{\scriptsize  
\baselineskip=5mm
a. Department of Mathematical Sciences,
University of Durham,
South Road,
Durham DH1 3LE,United Kingdom,
Mikhail.Menshikov@durham.ac.uk\\
%\vskip5mm
b. Institut de Recherche
Math\'ematique, Universit\'e de Rennes I,
Campus de Beaulieu, 
35042 Rennes, France, Dimitri.Petritis@univ-rennes1.fr}}
\vskip10mm
\LastUpdate
\end{center}
\vskip5mm
%\titlerunning{Continuous-time Markov chains}

\selectlanguage{british}

\begin{abstract}  We establish general theorems quantifying the notion of recurrence --- through an estimation of the moments of passage times --- for irreducible continuous-time Markov chains on countably infinite state spaces.
{\cut{These theorems are applied to models whose jump processes have a deeply critical behaviour.}} Sharp conditions of occurrence of the phenomenon of explosion are also obtained.
A new phenomenon of implosion is introduced and sharp conditions for its
occurrence are proven. 
The general results  are illustrated by treating models having a difficult behaviour even in discrete time.
\vskip5mm
\selectlanguage{french}
\centerline{\textbf{Résumé}}
\vskip2mm
Nous établissons des théorèmes généraux qui quantifient la notion de récurrence --- à travers l'estimation des moments de temps de passage ---
pour des chaînes de Markov à temps continu sur des espaces d'états dénombrablement  infinis. Des conditions fines garantissant l'apparition du phénomène d'explosion sont obtenues. Un nouveau phénomène d'implosion est introduit et des conditions fines pour son apparition sont aussi démontrées. Les résultats généraux sont illustrés sur des modèles ayant un comportement non-trivial même lorsqu'ils sont considérés en temps discret.

\end{abstract}
\selectlanguage{british}

\section{Introduction, notation, and main results}
\label{sec:intro}
\subsection{Introduction}

\label{ssec:intro}

In this paper
we establish general theorems quantifying the notion of recurrence
--- by studying which moments of the passage time exist --- for 
irreducible continuous-time Markov chains $\xi=(\xi_t)_{t\in[0,\infty[}$ on a countable space $\BbX$ in critical regimes. 
\cut{The aforementioned criticality means that even the discrete time chain --- sampled at the moments of jumps  --- has already a non-trivial behaviour. }

Models of discrete-time Markov chains with non-trivial  behaviour include reflected random walks in wedges of dimension $d= 2$ \cite{AspIasMen,FayMalMen,MenPet-wedge}, Lamperti processes \cite{Lam1,Lam2}, etc.
These chains exhibit strange polynomial behaviour. In the null recurrent case some (but not all) moments of the
random time needed to reach a finite set are obtained by transforming the 
discrete-time Markov chain into a discrete-time semimartingale via its mapping
through a Lyapunov function \cite{FayMalMen}.

There exist in the literature powerful theorems \cite{AspIasMen}, applicable to discrete-time critical Markov chains, allowing to determine which moments of the passage time exist. Beyond their theoretical interest, such results can be used to estimate the decay of the stationary measure \cite{MenPop}, and even the speed of convergence towards the stationary measure.
The first aim of this paper is to show that theorems concerning moments of passage times can be usefully and instrumentally extended to the continuous time situation. 

Continuous-time Markov chains have an additional feature compared to discrete-time ones, namely, on each visited state they spend a random holding time (exponentially distributed)
defined as the difference between successive jump times. We consider the space inhomogeneous situation where the parameters $\gamma_x\in\BbR_+$ of the exponential holding times   (the inverse of their expectation) are unbounded, i.e.\ 
$\sup_{x\in\BbX} \gamma_x=+\infty$. In such situations, the phenomenon of \textit{explosion} can occur for transient chains. Chung \cite{Chu} has established  that
the condition $\sum_{n=1}^\infty 1/{\gamma_{\tilde{\xi}_n}}<+\infty$, where $\tilde{\xi}_n$ is the position of the chain immediately after the $n$-th jump has
occurred,    
is equivalent to explosion. However this condition is very difficult to check since is global i.e.\ 
requires the knowledge of the entire trajectory of the embedded Markov chain. Later, sufficient conditions for explosion --- whose validity can be verified by local estimates ---   have been introduced.  Sufficiently sharp conditions of explosion and non-explosion, applicable only to Markov chains on countably infinite subsets of non-negative reals, are given in \cite{HamKle,KerKle}. In \cite{Str}, a sufficient condition of explosion is established for Markov chains on general countable sets; similar sufficient conditions of explosion are established in
\cite{Wag} for Markov chains on locally compact separable metric spaces.

The second aim of this paper is to show that the phenomenon of explosion can also be  sharply studied
by the use of Lyapunov functions and to establish \textit{locally verifiable}
 conditions for explosion/non explosion for Markov chains on arbitrary graphs. 
This method is applied to models that even without explosion are difficult to study. More fundamentally, the development of the  semimartingale method has been largely inspired with these specific critical models in mind
(such as the cascade of $k$-critically transient Lamperti models or of reflected random walks on quarter planes) that seem refractory to known methods. 

Finally, we demonstrate a new phenomenon, we termed \textit{implosion}
(see definition \ref{def:implo} below), reminiscent of the Döblin's condition for general Markov chains \cite{Dob}, occurring in the case $\sup_{x\in\BbX} \gamma_x=\infty$. We show that this phenomenon can also be explored with the help of Lyapunov functions.

\subsection{Notation} 
\label{ssec:notat}
Throughout this paper, $\BbX$ denotes the state space of our Markov chains;
it denotes an abstract denumerably infinite set, equipped with its full $\sigma$-algebra $\cX=\cP(\BbX)$. It is worth stressing here that, generally, this space is not naturally partially ordered. The graph whose edges are the ones induced by the stochastic matrix,  when equipped with the natural graph metric on $\BbX$ need not be isometrically embedable  into $\BbZ^d$ for some $d$. Since the definition of a continuous-time Markov chain on a denumerable set is standard (see \cite{Chu}, for instance), we introduce below its usual
equivalent description in terms of holding times and embedded Markov chain merely for the purpose of  establishing our notation.

Denote by $\Gamma=(\Gamma_{xy})_{x,y\in\BbX}$ the \textit{generator} of the continuous Markov chain, namely the 
matrix satisfying:
$\Gamma_{xy} \geq 0$ if $y\ne x$ and $\Gamma_{xx}=-\gamma_x$, where $\gamma_x=\sum_{y\in\BbX\setminus\{x\}} \Gamma_{xy}$. We assume that for all $x\in\BbX$, we have $\gamma_x<\infty$.

We construct a stochastic Markovian matrix $P=(P_{xy})_{x,y\in\BbX}$ out of $\Gamma$ by defining 
\[P_{xy}=\left\{\begin{array}{ll}
\frac{\Gamma_{xy}}{\gamma_x} & \textrm{if }\ \gamma_x\ne 0\\
0& \textrm{if }\ \gamma_x=0, 
\end{array}\right. \textrm{for} \ y\ne x, \ \ \textrm{and}\ \
P_{xx} =\left\{\begin{array}{ll}
0 & \textrm{if }\ \gamma_x\ne 0\\
1 & \textrm{if }\ \gamma_x= 0.
\end{array}\right.\]
The kernel $P$ defines a discrete-time $(\BbX,P)$-Markov chain 
$\tilde{\xi}=(\tilde{\xi}_n)_{n\in\BbN}$ termed the
\textit{Markov chain embedded at the moments of jumps}. 
To avoid irrelevant complications, we always assume that this Markov chain
is \textit{irreducible}. 

Define a sequence $\sigma=(\sigma_n)_{n\geq 1}$ of \textit{random holding times} distributed,  conditionally on $\tilde{\xi}$, according to an exponential law. More precisely, consider
\[\BbP(\sigma_{n}\in ds|\tilde{\xi})=\gamma_{\tilde{\xi}_{n-1}}
\exp(- s \gamma_{\tilde{\xi}_{n-1}}) \id_{\BbR_+}(s) ds, \ n\geq 1,\]
so that $\BbE(\sigma_{n}|\tilde{\xi})=1/{\gamma_{\tilde{\xi}_{n-1}}}$.
The sequence $J=(J_n)_{n\in\BbN}$ of \textit{random jump times} is defined accordingly by $J_0=0$ and for $n\geq 1$ by $J_n=\sum_{k=1}^n \sigma_k$.
The life time is denoted $\zeta=\lim_{n\rightarrow\infty} J_n$ and we say
that the (not yet defined continuous-time) Markov chain \textit{explodes} on  $\{\zeta<\infty\}$, while it does not
explode (or is regular, or conservative) on $\{\zeta=\infty\}$.

\begin{rema}{}
The parameter  $\gamma_x$ must be interpreted as the  proper frequency of the internal clock  of the Markov chain multiplicatively modulating the local speed of the chain. We always assume that for all $x\in\BbX$, $0<\gamma_x<\infty$. The case  
$0<\underline{\gamma}:=\inf_{x\in\BbX} \gamma_x\leq \sup_{x\in\BbX} =:\overline{\gamma}<\infty$ is elementary because the chain can be stochastically controlled by two Markov chains whose internal clocks tick respectively at constant pace $\underline{\gamma}$ and $\overline{\gamma}$. Therefore the sole interesting cases are 
\begin{itemize}
\item $\sup_x \gamma_x=\infty$: the internal clock ticks unboundedly fast (leading to an unbounded local speed of the chain),
\item $\inf_x \gamma_x=0$: the internal clock ticks arbitrarily slowly (leading to a local speed that can be arbitrarily close to 0).
\end{itemize}
\end{rema}

To have a unified description of both explosive and non-explosive processes, we can extend the state space into $\hat{\BbX}=\BbX\cup\{\partial\}$ by adjoining a special absorbing state $\partial$. The continuous-time Markov chain is then the
c\`adl\`ag process $\xi=(\xi_t)_{t\in[0,\infty[}$
defined by 
\[
\xi_0=\tilde{\xi}_0 \ \ \textrm{and}\ \ \
\xi_t=\left\{\begin{array}{ll}
\sum_{n\in\BbN} \tilde{\xi}_n\id_{[J_n,J_{n+1}[}(t) & \textrm{ for }\ 
0<t<\zeta\\
\partial & \textrm{ for }\ 
t\geq \zeta.
\end{array}\right.
\]
Note that although $\BbX$ is merely a set (i.e.\ no internal composition rule is defined on it), the above ``sum'' is well-defined since for every  fixed $t$ only one term survives. 
We refer the reader to standard texts (for instance \cite{Chu,Nor}) for the proof of the equivalence between $\xi$ and $(\tilde{\xi}, J)$. The natural right continuous filtration $(\cF_t)_{t\in[0,+\infty[}$ is defined as usual through
$\cF_t=\sigma(\xi_s: s\leq t)$; similarly  $\cF_{t-}=\sigma(\xi_s: s<t)$, and  
$\tilde{\cF}_n=\sigma(\tilde{\xi}_k, k\leq n)$ for $n\in\BbN$.
%%%% Klebaner p. 250
For an arbitrary $(\cF_t)$-stopping time $\tau$, we denote as usual its past $\sigma$-algebra $\cF_\tau=\{A\in\cF_\infty: A\cap\{\tau\leq t\}\in\cF_t\}$ and its strict past $\sigma$-algebra $\cF_{\tau-}=\sigma \{A\cap \{t<\tau\}; t\geq 0, A\in\cF_t\}\vee \cF_0$.
Since it is immediate to show that $\tau$ is $\cF_{\tau-}$-measurable, 
we conclude that
the only information contained in $\cF_{J_{n+1}}$ but not in $\cF_{J_{n+1}-}$
is contained within the random variable $\tilde{\xi}_{n+1}$, i.e.\ the position where the chain jumps at the moment $J_{n+1}$.

If $A\in\cX$, we denote by $\tau_A=\inf\{t\geq 0: \xi_t\in A\}$ the $(\cF_t)$-stopping time of reaching $A$.  

A dual notion to explosion is that of \textit{implosion}:
\begin{defi}{}
\label{def:implo}
Let $(\xi_t)_{t\in[0,\infty[}$ be a continuous-time Markov chain on $\BbX$ and let $A\subset\BbX$ be a proper subset of $\BbX$. We say that the Markov chain \textit{implodes} towards $A$ if $\exists K>0: \forall x\in A^c,
\BbE_x(\tau_A)\leq K.$
\end{defi}

\begin{rema}{} 
It will be shown in proposition \ref{pro:implosion-irreducibility} that in the case the set $A$ is finite and the chain is irreducible, implosion towards $A$ means implosion towards any state. In this situation, we speak about \textit{implosion of the chain}.
\end{rema}
It is worth noticing that some other definitions of implosion can be introduced; all convey the same idea of reaching a finite set from an arbitrary initial point within a random time that can be uniformly (in the initial point) bounded in some appropriate stochastic sense. We stick at the form introduced in the previous definition because it is easier to 
establish necessary and sufficient conditions for its occurrence and is easier to illustrate on specific problems (see \S \ref{sec:exam}).

We use the notational conventions of \cite{Revuz} to denote measurable functions, namely
$m\cX =\{f:\BbX\rightarrow \BbR| f\ \textrm{is $\cX$-measurable}\}$ with all possible decorations: $b\cX$ to denote bounded measurable functions,
$m\cX_{+}$ to denote non-negative measurable functions, etc.
For $f\in m\cX_+$ and $\alpha>0$, we denote by $\sublev{f}{\alpha}$ the \textit{sublevel set} of $f$ of height $\alpha$ defined by
\[\sublev{f}{\alpha}:=\{x\in\BbX: f(x) \leq \alpha\}.\]
We recall that a function $f\in m\cX_+$ is \textit{unbounded} if $\sup_{x\in\BbX} f(x)=
+\infty$ while  \textit{tends to infinity} ($f\rightarrow\infty$) when for every $n\in\BbN$
the sublevel set $\sublev{f}{n}$ is finite. 
Measurable functions $f$ defined on $\BbX$ can be extended to functions $\hat{f}$, defined on $\hat{\BbX}$,
by $\hat{f}(x)=f(x)$ for all $x\in\BbX$ and $\hat{f}(\partial):=0$ (with obvious extension of the $\sigma$-algebra).

We denote by $\Dom(\Gamma)=\{f\in m\cX: \sum_{y\in\BbX\setminus\{x\}} \Gamma_{xy}
|f(y)|<+\infty, \forall x\in\BbX\}$ the domain of the generator and by  $\Dom_+(\Gamma)$ the set of  non-negative functions in the domain.
The action of the generator $\Gamma$ on $f\in\Dom(\Gamma)$ reads then:
$\Gamma f(x):=\sum_{y\in\BbX} \Gamma_{xy} f(y)$.

\subsection{Main results}
\label{ssec:main}

We recall once more that in the whole paper  we make the following

\begin{glas}{} The chain embedded at the moments of jumps is  \textit{irreducible} and $0<\gamma_x<\infty$ for all $x\in\BbX$.
\end{glas}

We are now in position to state our main results concerning the use of Lyapunov function to obtain, through semimartingale theorems, precise and locally verifiable conditions on the parameters of the chain allowing us to establish existence or non-existence of moments of passage times, explosion or implosion phenomena. The proofs of these results are given in section \ref{sec:proof}; the section \ref{sec:exam} treats some critical models (especially \ref{ssec:lamperti} and \ref{ssec:quadrant}) that are difficult  to study even in discrete time, illustrating thus both the power of our methods and giving specific examples on how to use them. 

\subsubsection{Existence or non-existence of moments of passage times}

\begin{theo} %%%%%%%%%%%%%%%%%%%%%%%%%%%%
\label{thm:moments} 
Let $f\in \Dom_+(\Gamma)$ be such that $f\rightarrow \infty$. 
\begin{enumerate}
\item If there exist constants $a>0$, $c>0$  and $p>0$ such that
$f^p\in\Dom_+(\Gamma)$ and
\[\Gamma f^p(x)\leq  -cf^{p-2}(x), \forall x\not\in \sublev{f}{a}, \]
then $\BbE_{x}(\tau_{\sublev{f}{a}}^q)<+\infty$ for all $q< p/2$ and all $x\in\BbX$. 
\item 
Let $g \in m\cX_+$.
If there exist
\begin{enumerate}
\item  a constant $b>0$ such that $f\leq  bg$,
\item constants $a>0$ and $c_1>0$ such that
$\Gamma g(x) \geq -c_1$ for $x\not\in \sublev{g}{a}$, 
\item constants $c_2>0$ and $r>1$ such that $g^r\in\Dom(\Gamma)$ and $\Gamma g^r (x) \leq c_2 g^{r-1}(x)$ for $x\not\in \sublev{g}{a}$,
 and 
\item a constant $p>0$ such that $f^p\in\Dom(\Gamma)$ and $\Gamma f^p\geq 0$ for $x\not\in \sublev{f}{ab}$,
\end{enumerate}
 then $\BbE_{x}(\tau_{\sublev{f}{a}}^q)=+\infty$ for all $q>p$ and all
$x\not\in \sublev{f}{a}$.
\end{enumerate}
\end{theo}

In many cases,  the function $g$, whose existence is assumed in statement 2, of the above theorem can be chosen as  $g=f$ (with obviously $b=1$). In such situations  we have to check $\Gamma f^r \leq c_2 f^{r-1}$ for some $r>1$  and find a $p>0$ such that $\Gamma f^p \geq 0$ on the appropriate sets.  However, in the case of the problem studied in \S \ref{ssec:quadrant}, for instance, the full-fledged version of the previous theorem is needed.

Note that the conditions $f\in\Dom_+(\Gamma)$ and $f^p\in\Dom_+(\Gamma)$
for some $p>0$ holding simultaneously imply that $f^q\in\Dom_+(\Gamma)$ for all $q$ in the interval with end points $1$ and $p$. When  $\tau_A$ is integrable, the chain is positive recurrent. In the null recurrent situation however, $\tau_A$ is almost surely finite
but not integrable; nevertheless, some fractional moments $\BbE(\tau_A^q)$ with $q<1$ can exist. Similarly, in the positive recurrent case, some higher moments
 $\BbE(\tau_A^q)$ with $q>1$ may fail to exist.

 When $p=2$, the first statement in the above theorem \ref{thm:moments} simplifies to the following: if $\Gamma f(x)\leq -\epsilon$, for some $\epsilon>0$ and for $x$ outside a finite set $F$, then the passage time $\BbE_x(\tau_F^q)<\infty$ for all $x\in\BbX$ and all $q<1$. 
As a matter of fact, in this situation, we have a stronger result, expressed in the form of the following
 \begin{theo}
 \label{thm:moments-stronger}
 The following are equivalent:
 \begin{enumerate}
 \item The chain is positive recurrent.
 \item 
 There exist a pair  $(F,f)$, where $F$ is a finite proper subset of\ \ $\BbX$\ \ and $f$ a function  in $\Dom_+(\Gamma)$  verifying\ \ \ $\inf_{x\in F^c} f(x) > \max_{x\in F} f(x)$, and a constant $\epsilon>0$   such that  
$\Gamma f(x) \leq -\epsilon$ for all $x\not\in F$.
\end{enumerate}
 \end{theo}
Obviously, positive recurrence implies \textit{a fortiori} that 
$\BbE_x(\tau_F)<\infty$.

\begin{rema}{}
It is obvious that the pair $(F,f)$ in the above theorem \ref{thm:moments-stronger} is not uniquely determined. Mostly, it will be possible to chose a function $f\rightarrow \infty$ and $F$ as the 
sublevel set of $f$ at height $a$, for some $a>0$. Sometimes it will be possible to chose  the function $f$ uniformly bounded; this case will be further considered in theorem \ref{thm:implo} and leads to implosion.  It is also immediate that if $f$ verifies the conditions $\Gamma f(x) \leq -\epsilon$ for $x\in F^c$ and  $\inf_{x\in F^c} f(x) > \max_{x\in F} f(x)$, then the function $g= f\id_{F^c}$ verifies \textit{a fortiori} the same conditions. 
\end{rema}

 If only establishing occurrence of recurrence or transience is sought, the first generalisation of Foster's criteria to the continuous-time case was given in the unpublished technical report \cite{Miller-Foster}. Notice however that the method in that paper is subjected to the same important restriction as in the original paper of Foster \cite{Fos}, namely the semi-martingale condition must be verified everywhere but in one point.
 
 If $\gamma_x$ is bounded away from $0$ and $\infty$, then since the Markov chain can be stochastically controlled by two Markov chains with constant  $\gamma_x$ reading respectively $\gamma_x=\underline{\gamma}$ and $\gamma_x=\overline{\gamma}$ for all $x$, the previous result is the straightforward generalisation of the theorems 1 and 2 of \cite{AspIasMen} 
 established in the case of discrete time; as a matter of fact, in the case of constant $\gamma_x$,  the complete behaviour of the continuous time process is encoded solely into the jump chain and since results in 
 \cite{AspIasMen} were optimal, the present theorem introduces no improvement.  Only the  interesting cases of $\sup_{x\in\BbX}\gamma_x=\infty$ or $\inf_{x\in\BbX}\gamma_x=0$
 are studied in the sequel; the models studied in \S \ref{sec:exam},
 illustrate how the theorem can be used in critical cases to obtain locally verifiable 
 conditions of existence/non-existence of moments of reaching times.
 The process $X_t=f(\xi_t)$, the image of the Markov chain through the Lyapunov function $f$, can be shown to be a semimartingale; therefore, the semimartingale approach will prove instrumental as was the case in discrete time chains.
 
 \subsubsection{Explosion}
 The next results concern explosion obtained again using Lyapunov functions. It is worth noting that although explosion can only occur in the transient case,
 the next result is strongly reminiscent of the Foster's criterion \cite{Fos}
 for positive recurrence!  
 \begin{theo} %%%%%%%%%%%%%%%%%%%%%%%%%%%%%%
 \label{thm:explo}
 The following are equivalent:
 \begin{enumerate}
 \item There exist $f\in\Dom_+(\Gamma)$ strictly positive and 
 $\epsilon >0$ such that  $\Gamma f(x)\leq -\epsilon$
 for all $x\in\BbX$.
 \item The explosion time $\zeta$ satisfies $\BbE_x \zeta <+\infty$ for all $x\in\BbX$.
 \end{enumerate}
 \end{theo}
 
 \begin{rema}{}
 Comparison of statements 1 of theorem \ref{thm:moments} and 1 of theorem \ref{thm:explo} demands some comments. The conditions of theorem \ref{thm:moments} imply that $\sublev{f}{a}$ is a finite set and \textit{necessarily not empty}.  For $p=2$ and $F=f^p$, the condition reads $\Gamma F(x) \leq -\epsilon$ outside some finite set and this implies recurrence. In theorem \ref{thm:explo} the condtion $\Gamma f\leq -\epsilon$ must hold \textit{everywhere} and this implies transience.
\end{rema}
The one-side implication 
$[\Gamma f(x)\leq -\epsilon, \forall x]
\Rightarrow [\BbP_x(\zeta<+\infty)=1, \forall x]$ is already established, for $f\geq0$,   in the second part of
theorem 4.3.6 of  \cite{Str}. Here, modulo the (seemingly) slightly more stringent requirement $f>0$,  we strengthen the result from almost sure finiteness to integrability  and prove equivalence instead of mere implication.  

\begin{prop}
\label{pro:explo}
Let $f\in\Dom_+(\Gamma)$ be a strictly positive bounded function and denote $b=\sup_{x\in\BbX} f(x)$;  assume there exists an increasing --- not necessarily strictly --- function $g:\BbR_+\setminus\{0\}\rightarrow\BbR_+\setminus\{0\}$ such that its inverse has an integrable singularity at 0, i.e.\  
$\int_0^b\frac{1}{g(y)}dy<\infty$.
If  we have
$\Gamma f(x)\leq -g(f(x))$ for all $x\in \BbX$, then $\BbE_x\zeta <\infty$ for all $x$.
\end{prop}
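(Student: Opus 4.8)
The plan is to reduce the proposition to Theorem~\ref{thm:explo} by composing the bounded Lyapunov function $f$ with a concave increasing map manufactured out of $g$. Concretely, I would set
\[
\phi(u)=\int_0^u\frac{dy}{g(y)},\qquad u\in[0,b].
\]
The integrable-singularity hypothesis $\int_0^b dy/g(y)<\infty$ makes $\phi$ well defined and finite on all of $[0,b]$; since $1/g>0$ the map $\phi$ is strictly increasing with $\phi(0)=0$, it is bounded by $\phi(b)<\infty$, and, because $g$ is increasing, $1/g$ is non-increasing so that $\phi$ is concave. I would then put $F=\phi\circ f$. As $0<f\le b$ and $\phi$ is strictly increasing with $\phi(0)=0$, the function $F$ is strictly positive and bounded, $0<F\le\phi(b)$; boundedness together with $\gamma_x<\infty$ gives $\sum_{y\ne x}\Gamma_{xy}|F(y)|\le\phi(b)\,\gamma_x<\infty$, so that $F\in\Dom_+(\Gamma)$.

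The heart of the matter is to estimate $\Gamma F$. Since every row of $\Gamma$ sums to zero, the diagonal contribution cancels and one may write $\Gamma F(x)=\sum_{y\ne x}\Gamma_{xy}\big(\phi(f(y))-\phi(f(x))\big)$. The decisive ingredient is the uniform supporting-line inequality
\[
\phi(v)-\phi(u)\le\frac{1}{g(u)}\,(v-u),\qquad u,v\in[0,b],
\]
which I would establish directly from the monotonicity of $1/g$, treating the cases $v\ge u$ and $v<u$ separately and bounding the integral $\phi(v)-\phi(u)$ by $1/g(u)$ times the increment $v-u$. Because $\Gamma_{xy}\ge0$ for $y\ne x$, inserting $u=f(x)$ and $v=f(y)$ into this inequality yields
\[
\Gamma F(x)\le\frac{1}{g(f(x))}\sum_{y\ne x}\Gamma_{xy}\big(f(y)-f(x)\big)=\frac{1}{g(f(x))}\,\Gamma f(x),
\]
where the last equality again uses that the rows of $\Gamma$ sum to zero and that $f\in\Dom_+(\Gamma)$. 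The hypothesis $\Gamma f(x)\le-g(f(x))$ now forces $\Gamma F(x)\le-1$ for every $x\in\BbX$. With $F$ a strictly positive function in $\Dom_+(\Gamma)$ satisfying $\Gamma F\le-1$ everywhere, Theorem~\ref{thm:explo} applies with $\epsilon=1$ and delivers $\BbE_x\zeta<\infty$ for all $x$, which is the claim.

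I expect the only genuinely delicate step to be the uniform supporting-line inequality: it is precisely where the monotonicity of $g$ is used in an essential way, and it is what permits $g$ to be merely increasing --- possibly discontinuous --- by dispensing with any appeal to differentiability of $\phi$. The surrounding bookkeeping, namely the legitimacy of rewriting $\Gamma F(x)$ through the mean-zero property of the rows of $\Gamma$ and the verification that $F\in\Dom_+(\Gamma)$, is routine once the boundedness of $F$ and the assumption $f\in\Dom_+(\Gamma)$ are in hand.
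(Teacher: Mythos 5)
Your proposal is correct and follows essentially the same route as the paper: the paper also sets $G(z)=\int_0^z dy/g(y)$, notes that $G$ is increasing, bounded and concave, deduces $\Gamma(G\circ f)(x)\le \Gamma f(x)/g(f(x))\le -1$, and concludes via Theorem~\ref{thm:explo}. Your only (welcome) refinement is to derive the supporting-line inequality $\phi(v)-\phi(u)\le (v-u)/g(u)$ directly from the monotonicity of $1/g$ rather than from differentiability of $G$, which the paper invokes somewhat loosely since $g$ need not be continuous.
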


The previous proposition, although stating the conditions on $\Gamma f$  quite differently than in theorem \ref{thm:explo}, will be shown to follow from the former. This proposition is interesting only when $\inf_{x\in\BbX}g\circ f(x)=0$  because then  the condition required in \ref{pro:explo} is weaker than the uniform requirement $\Gamma f(x)\leq -\epsilon$ for all $x$ of theorem \ref{thm:explo}.

If for some $x\in\BbX$, explosion (i.e.\ $\BbP_x(\zeta<+\infty)>0$) occurs, irreducibility of the chain implies that the process remains explosive for all starting points $x\in\BbX$. However, 
since the phenomenon of explosion can only occur in the transient case,
examples (see \S \ref{ssec:fingers}) can be constructed with non-trivial tail boundary so that for some initial $x\in\BbX$, we have both 
$0<\BbP_x(\zeta<+\infty)<1$. Additionally, the previous theorems established conditions that guarantee $\BbE_x\zeta <\infty$ (implying explosion). However examples are constructed where $\BbP_x(\zeta=\infty)=0$ (explosion does not occur) while $\BbE_x\zeta=\infty$. 
It is therefore important to have results on conditional explosion.

\begin{theo} %%%%%%%%%%%%%%%%%%%%%%%%%%%%%
\label{thm:cond-explo}
Let  $A$ be a proper (finite or infinite) subset of $\BbX$ and $f\in \Dom_+(\Gamma)$ such that 
\begin{itemize}
\item there exists $x_0\not\in A$ with $f(x_0)<\inf_{x\in A} f(x)$,
\item $\Gamma f(x) \leq -\epsilon$ on $A^c$
\end{itemize}
Then $\BbE_x(\zeta| \tau_A =\infty) <\infty$.
\end{theo}

The previous results (theorems \ref{thm:explo} and \ref{thm:cond-explo})  --- through unconditional or conditional integrability of the explosion time $\zeta$ ---  give conditions establishing explosion. 
For the theorem \ref{thm:explo} these conditions are even necessary and sufficient. It is nevertheless extremely difficult in general to prove that a function satisfying the conditions of the theorems does not exist. 
We need therefore a more manageable criterion guaranteeing non-explosion. Such a result is provided by the following

\begin{theo}  %%%%%%%%%%%%%%%%%%%%%%%%%%%
\label{thm:non-explo}
Let $f\in\Dom_+(\Gamma)$.  If
\begin{itemize}
\item $f\rightarrow \infty$,
\item there exists an increasing (not necessarily strictly) function 
$g:\BbR_+\rightarrow\BbR_+$ whose inverse is locally integrable but has non integrable tail (i.e.\ 
$G(z):=\int_{0} ^z \frac{dy}{g(y)}<+\infty$ for all $z\in\BbR_+$ but $\lim_{z\rightarrow \infty} G(z)=\infty$), and
\item $\Gamma f(x) \leq g(f(x))$ for all $x\in \BbX$,
\end{itemize}
 then 
$\BbP_x(\zeta=+\infty)=1$ for all $x\in\BbX$.
\end{theo}

The idea of the proof of the theorem \ref{thm:non-explo} relies on the intuitive idea that if $f(x)\leq g(f(x))$ for all $x$, then $\BbE(f(X_t))$ cannot grow very fast with time and since $f\rightarrow \infty$
the process itself cannot grow fast either. The same idea has been used in \cite{EibWag} to prove non-explosion for Markov chains on metric separable spaces.
Our result relies on the powerful  ideas developed in  the proof of  theorems 1 and 2 of \cite{KerKle} and of theorem 4.1 of \cite{EibWag} but  improves the original results in several respects. 
In first place, our result is valid on arbitrary denumerably infinite state spaces $\BbX$ (not necessarily subsets of $\BbR$); in particular, it can cope with models on higher dimensional lattices (like random walks in $\BbZ^d$ or reflected random walks in quadrants).
Additionally, even for processes on denumerably infinite subsets of $\BbR$, our result covers critical regimes  such as those exhibited by the Lamperti model (see \S \ref{ssec:lamperti}),  a ``crash test'' model, recalcitrant to the methods of  \cite{KerKle}.

\subsubsection{Implosion}

Finally, we state results about implosion. 
\begin{theo} %%%%%%%%%%%%%%%%%%%%%%%%%%%%
\label{thm:implo}
Suppose the embedded chain is recurrent.
\begin{enumerate}
\item
The following are equivalent:
\begin{itemize}
\item There exist a function $f\in\Dom_+(\Gamma)$ such that $\sup_{x\in\BbX} f(x)=b<\infty$, an   $a\in ]0,b[$, such that $\sublev{f}{a}$ is finite, and an $\epsilon>0$  such that
\[x\not\in \sublev{f}{a} \Rightarrow \Gamma f(x)\leq -\epsilon.\]
\item For every finite $A\in\cX$, there exists a constant $C:=C_A>0$ such that 
\[x\not\in A \Rightarrow \BbE_x\tau_{A} \leq C,\]
 (hence there is implosion towards $A$ and subsequently the chain is implosive).
\end{itemize}
\item Let $f\in\Dom_+(\Gamma)$ be such that  $f\rightarrow \infty$ and assume there exist constants $a>0$, $c>0$, $\epsilon>0$, and $r>1$ such that  $f^r\in\Dom_+(\Gamma)$. If further
\begin{itemize}
\item $\Gamma f(x) \geq -\epsilon$, for all $x\not\in \sublev{f}{a}$, and
\item $\Gamma f^r(x) \leq cf^{r-1}(x)$, for all $x\not\in  \sublev{f}{a}$, 
\end{itemize}
then the chain does not implode towards $ \sublev{f}{a}$.
\end{enumerate}
\end{theo}

We conclude this section by the following
\begin{prop}
\label{pro:implosion}
Suppose the embedded chain is recurrent. 
Let $f\in\Dom_+(\Gamma)$ be strictly positive and such that $\sup_{x\in\BbX} f(x)=b<\infty$; assume further that for any $a$ such that $0<a<b$, the sublevel set $\sublev{f}{a}$ is finite.
Let $g:[0,b]\rightarrow \BbR_+$ be an increasing function such that
$B:=\int_0^b \frac{dy}{g(y)}<\infty$. If $\Gamma f(x) \leq -g(f(x))$ for all $x\not\in  \sublev{f}{a}$ then $\BbE_x \tau_{ \sublev{f}{a}}\leq B$ for all $x\not\in  \sublev{f}{a}$, i.e.\ the chain implodes towards $\sublev{f}{a}$.
\end{prop}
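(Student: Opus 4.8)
The plan is to reduce the proposition to the uniform drift condition $\Gamma h(x)\le -1$ already exploited in theorem \ref{thm:implo}, by composing $f$ with a suitable concave change of variable, in the same spirit in which proposition \ref{pro:explo} is meant to follow from theorem \ref{thm:explo}. Concretely, I would set
\[
\Phi(t):=\int_0^t \frac{dy}{g(y)},\qquad t\in[0,b],
\]
and define the new Lyapunov function $h:=\Phi\circ f$. Since $g$ is increasing and (by $B<\infty$) strictly positive almost everywhere, $1/g$ is non-increasing, so $\Phi$ is well defined and finite, increasing, and concave, with $\Phi(0)=0$ and $\Phi(b)=B$. In particular $0\le h(x)=\Phi(f(x))\le \Phi(b)=B$ for every $x$, so $h$ is bounded; being non-negative and bounded it lies automatically in $\Dom_+(\Gamma)$, because $\sum_{y\ne x}\Gamma_{xy}|h(y)|\le B\gamma_x<\infty$.

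The heart of the argument is the estimate of $\Gamma h$. Writing $\Gamma h(x)=\sum_{y\ne x}\Gamma_{xy}\big(\Phi(f(y))-\Phi(f(x))\big)$ and using the supporting-tangent inequality for the concave function $\Phi$, namely $\Phi(v)-\Phi(u)\le \frac{1}{g(u)}(v-u)$ (valid because $1/g(u)$ belongs to the superdifferential of $\Phi$ at $u$, the one-sided derivatives being $1/g(u^+)\le 1/g(u)\le 1/g(u^-)$), I would bound each summand and then sum, using $f\in\Dom(\Gamma)$ and the fact that the factor $1/g(f(x))$ does not depend on the summation index $y$:
\[
\Gamma h(x)\le \frac{1}{g(f(x))}\sum_{y\ne x}\Gamma_{xy}\big(f(y)-f(x)\big)=\frac{\Gamma f(x)}{g(f(x))}.
\]
For $x\notin\sublev{f}{a}$ the hypothesis $\Gamma f(x)\le -g(f(x))$ then yields $\Gamma h(x)\le -1$. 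Thus $h$ is a bounded non-negative Lyapunov function satisfying the uniform condition $\Gamma h\le -1$ off the finite set $\sublev{f}{a}$, and its monotone dependence on $f$ makes its sublevel sets finite as well, so $h$ also fits the hypotheses of the first statement of theorem \ref{thm:implo}.

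It then remains to convert $\Gamma h\le -1$ into the quantitative bound $\BbE_x\tau_{\sublev{f}{a}}\le B$. Setting $A=\sublev{f}{a}$ and $\tau=\tau_A$, the route is Dynkin's formula: $h(\xi_{t\wedge\tau})+(t\wedge\tau)$ is a supermartingale, whence $\BbE_x[t\wedge\tau]\le h(x)-\BbE_x[h(\xi_{t\wedge\tau})]\le h(x)\le B$, and letting $t\to\infty$ by monotone convergence gives $\BbE_x\tau\le h(x)\le B$ for $x\notin A$, which is exactly implosion towards $\sublev{f}{a}$ with the announced explicit constant. I expect the main obstacle to be the rigorous justification of this last step in continuous time: verifying that the compensated process is genuinely a supermartingale up to the stopping time $\tau$, controlling the passage to the limit, and above all ensuring that no explosion can occur before $A$ is reached. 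This is precisely where the standing recurrence assumption on the embedded chain is used, since it guarantees that the finite set $A$ is reached almost surely and that the chain is regular up to $\tau_A$, so that the drift bound $\Gamma h(\xi_s)\le -1$ is in force for almost every $s<\tau$.
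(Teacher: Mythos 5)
Your argument is correct and is essentially the paper's own proof: the paper likewise sets $G(z)=\int_0^z dy/g(y)$, uses concavity of $G$ to get $m_{G\circ f}(x)\le m_f(x)/g(f(x))$ and hence $\Gamma(G\circ f)\le -1$ off $\sublev{f}{a}$, and then concludes. The supermartingale/Dynkin step you worry about at the end is exactly what lemma \ref{lem:expec-time} provides, yielding $\BbE_x\tau_{\sublev{f}{a}}\le G(f(x))\le G(b)=B$.
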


In some applications, it is quite difficult to guess  immediately the form of the function $f$ satisfying the uniform condition $\Gamma f(x) \leq -\epsilon$ required for the first statement of the previous theorem \ref{thm:implo} to apply. It is sometimes more convenient to check merely that $\Gamma f(x) \leq -g(f(x))$ for some function $g$ vanishing at $0$ in some controlled way. 
The proposition \ref{pro:implosion} --- although does not improve the already optimal statement 1 of the theorem \ref{thm:implo} --- provides us with a convenient alternative condition to be checked.

\section{Proof of the main theorems}
\label{sec:proof}

%%%%%%%%%%%%%%%%%%%%%%%%%%%%%%%%%%
 We have already introduced the notion of $\Dom(\Gamma)$. A related notion is that of  locally $p$-integrable functions,  
defined as $\ell^p(\Gamma)=\{f\in m\cX: \sum_{y\in\BbX} \Gamma_{xy}
|f(y)-f(x)|^p<+\infty, \forall x\in\BbX\}$, for some $p>0$. Obviously $\ell^1(\Gamma)=\Dom(\Gamma)$.  In accordance to our notational convention on decorations, $\ell_+^p(\Gamma)$ will denote positive $p$-integrable functions. For $f\in \ell^1(\Gamma)$, we define the \textit{local $f$-drift}
of the embedded Markov chain as the random variable
\[\Delta^f_{n+1}:=\Delta f(\tilde{\xi}_{n+1}):=f(\tilde{\xi}_{n+1})-f(\tilde{\xi}_{n+1-})=f(\tilde{\xi}_{n+1})-f(\tilde{\xi}_{n}),\] the \textit{local mean $f$-drift} by 
\[m_f(x):=\BbE(\Delta^f_{n+1}|
\tilde{\xi}_{n}=x)=\sum_{y\in\BbX} P_{xy} (f(y)-f(x))=\BbE_x\Delta^f_{1},\] and for  $\rho\geq1$ and 
$f\in \ell^\rho(\Gamma)$ the \textit{$\rho$-moment of the local $f$-drift}
 by
\[ v_f^{(\rho)}(x):=\BbE\large(|\Delta^f_{n+1}|^\rho|
\tilde{\xi}_{n}=x\large)=\sum_{y\in\BbX} P_{xy} |f(y)-f(x)|^\rho=\BbE_x|\Delta^f_{1}|^\rho.\] We always write shortly $v_f(x):=v_f^{(2)}(x)$.
The action of the generator $\Gamma$ on $f$ reads 
\[\Gamma f(x):=\sum_{y\in\BbX} \Gamma_{xy} f(y)=\gamma_x m_f(x).\]
Since $(\xi_t)_t$ is a pure jump process, the  process  $(X_t)_t$ transformed by $f\in\Dom(\Gamma)$, i.e.\ $X_t=f(\xi_t)$, is also a pure jump process reading, for $t<\zeta$,
\[X_t=f(\xi_t)=\sum_{k=0} ^\infty f(\tilde{\xi}_k)\id_{[J_k,J_{k+1}[}(t)=X_0+
\sum_{k=0} ^\infty \Delta^f_{k+1}\id_{]0,t]} (J_k).\]
If there is no explosion, the process $(X_t)$ is a $(\cF_t)$-semimartingale admitting the decomposition $X_t=X_0+M_t+A_t$, where $M_t$ is a martingale vanishing at $0$ and the predictable compensator reads \cite{Kle}
\[A_t=\int_{]0,t]} \Gamma f(\xi_{s-}) ds= \int_{[0,t[}  \Gamma f(\xi_{s}) ds.\]
Notice that, although not explicitly marked, 
$(X_t)$, $(M_t)$,  and $(A_t)$ depend on $f$. Therefore, for any admissible $f$ we have $dX_t=dM_t+dA_t=dM_t + \Gamma f(\xi_{t-})dt$.

\subsection{Proof of the theorems \ref{thm:moments} and \ref{thm:moments-stronger} on moments of passage times}
\label{ssec:semi-mart}
We start by the theorem \ref{thm:moments-stronger} that --- although stronger  --- is much simpler to prove.

\begin{lemm}
\label{lem:expec-time}
Let $(\Omega, \cG, (\cG_t), \BbP)$ be a filtered probability space and 
$(Y_t)$ a $(\cG_t)$-adapted process taking values in $[0, \infty[$.
Let $c\geq 0$ and denote $T=\inf\{t\geq 0: 
Y_t \leq c\}$; suppose that there exists $\epsilon >0$ such that 
$\BbE(dY_t |\cG_{t-})\leq -\epsilon dt$ on the event $\{T\geq t\}$. Suppose that $Y_0=y$ almost surely, for some $y\in[c,\infty[$.
Then $\BbE_y(T)\leq\frac{y}{\epsilon}$.
\end{lemm}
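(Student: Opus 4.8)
The plan is to convert the infinitesimal drift hypothesis into a genuine supermartingale and then read off the bound by a stopping argument. First I would fix the precise meaning of the hypothesis $\BbE(dY_t\mid\cG_{t-})\leq-\epsilon\,dt$ on $\{T\geq t\}$: it says that $(Y_t)$ is a $(\cG_t)$-semimartingale with decomposition $Y_t=Y_0+M_t+A_t$, where $M$ is a local martingale null at $0$ and $A$ is a predictable finite-variation process with $A_0=0$ satisfying $dA_t\leq-\epsilon\,dt$ on $\{t\leq T\}$; equivalently, $t\mapsto A_{t\wedge T}+\epsilon(t\wedge T)$ is non-increasing. This is exactly the form in which the condition will arise in our applications, where $Y_t=f(\xi_t)$ and $dA_t=\Gamma f(\xi_{t-})\,dt$.

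With this in hand I would introduce the stopped and shifted process
\[
Z_t:=Y_{t\wedge T}+\epsilon\,(t\wedge T),\qquad t\geq0,
\]
and show it is a nonnegative $(\cG_t)$-supermartingale. Writing $Z_t=Y_0+N_t+C_t$ with $N_t:=M_{t\wedge T}$ a local martingale and $C_t:=A_{t\wedge T}+\epsilon(t\wedge T)$, the hypothesis gives that $C$ is non-increasing with $C_0=0$, hence $C_t\leq0$. Consequently $Y_0+N_t=Z_t-C_t\geq Z_t\geq0$, since $Y\geq0$ and $\epsilon(t\wedge T)\geq0$. Thus $Y_0+N_t$ is a nonnegative local martingale, and a nonnegative local martingale is a supermartingale (by Fatou), whence $\BbE(Y_0+N_t)\leq Y_0=y$ for every $t$.

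The conclusion then follows by taking expectations and discarding nonnegative terms. Indeed $\BbE(Z_t)=\BbE(Y_0+N_t)+\BbE(C_t)\leq y+0=y$, because $C_t\leq0$. Since $Y_{t\wedge T}\geq0$, this yields $\epsilon\,\BbE(t\wedge T)\leq\BbE(Z_t)\leq y$, i.e.\ $\BbE(t\wedge T)\leq y/\epsilon$ for all $t$. Letting $t\to\infty$ and invoking monotone convergence on the increasing family $(t\wedge T)\uparrow T$ gives $\BbE(T)\leq y/\epsilon$, which is the assertion (and in particular shows $T<\infty$ almost surely).

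The step I expect to require the most care is the passage from the purely formal $\BbE(dY_t\mid\cG_{t-})\leq-\epsilon\,dt$ to a statement about an honest supermartingale: one must ensure that $Y$ is genuinely a semimartingale with a predictable, absolutely continuous drift, and that the local-martingale part survives stopping at $T$ without integrability issues. The device of working with the nonnegative local martingale $Y_0+N_t$, rather than attempting optional stopping on $M$ directly, is precisely what sidesteps any uniform-integrability requirement, since nonnegativity alone upgrades a local martingale to a supermartingale.
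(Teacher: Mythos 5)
Your proof is correct and follows essentially the same route as the paper: integrate the drift inequality over $[0,t\wedge T]$, use nonnegativity of $Y$ to obtain $\epsilon\,\BbE(t\wedge T)\leq y$, and let $t\to\infty$. The only difference is that you justify discarding the martingale part via the nonnegative-local-martingale/supermartingale device, a point the paper's proof passes over silently by simply "taking expectations"; this is a welcome refinement rather than a different argument.
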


\begin{proof}
Since $\{T\geq t\}\in\cG_{t-}$, the hypothesis of the lemma reads
$\BbE(dY_{t\wedge T}|\cG_{t-})\leq -\epsilon \id_{\{T\geq t\} }dt$. Taking expectations and integrating over time,  yields:
$0\leq \BbE(Y_{t\wedge T})\leq y -\epsilon \int_0^t \BbP(T\geq s) ds$
which implies $\epsilon\BbE(T)\leq  y$.
\end{proof}

\noindent\textit{Proof of the theorem \ref{thm:moments-stronger}.}
\begin{description}
\item{$[2\Rightarrow 1]:$}
Let $X_t=f(\xi_t)$; then the condition 2 reads 
$\BbE(dX_{t\wedge \tau_F}|\cG_{t-})\leq -\epsilon \id_{\{\tau_F\geq t\} }dt$. Hence,  in accordance with the previous lemma \ref{lem:expec-time}, we get 
$\BbE_x(\tau_F)\leq \frac{f(x)}{\epsilon}$ for every $x\not\in F$.
Now, let $x\in F$. Then
\begin{eqnarray*}
\BbE_x (\tau_F)&=& \BbE_x (\tau_F|\tilde{\xi}_1\in F)\BbP_x(\tilde{\xi}_1\in F)+ \BbE_x (\tau_F|\tilde{\xi}_1\not\in F)\BbP_x(\tilde{\xi}_1\not\in F)\\
&\leq & \sup_{x\in F} \left(\frac{1}{\gamma_x}+\sum_{y\in\BbX} P_{xy} \frac{f(y)}{\epsilon}\right)<\infty,
\end{eqnarray*}
the finiteness of the last expression being guaranteed by the conditions $f\in\Dom_+(\Gamma)$ and the finiteness of the set $F$.
Positive recurrence follows then from irreducibility of the chain.
\item{$[1\Rightarrow2]:$}
Let $F=\{z\}$ for some fixed $z\in\BbX$; positiive recurrence of the chain implies that $\BbE_x(\tau_F)<\infty$ for all $x\in\BbX$.
Fix some $\epsilon>0$ and define 
\[f(x)= \left\{\begin{array}{ll} \BbE_x(\tau_F) & \textrm{if }\ x\not\in F,\\
0 & \textrm{otherwise. }
\end{array}\right.\]
Then, $m_f(x) \leq \sum_{y\not= x}P_{xy} \epsilon\BbE_y(\tau_F) -\epsilon\BbE_x( \tau_F)=
\epsilon\BbE_x(\tau_F-\sigma_1)-\epsilon \BbE_x(\tau_F)=-\epsilon\BbE_x(\sigma_1)= -\frac{\epsilon}{\gamma_x}$, for all $x\not\in F$. It follows that 
$\Gamma f(x) =\gamma_x m_f(x)\leq -\epsilon$ outside $F$. 
\end{description}
Writing $X_t=f(\xi_t)$, we check immediately that $(X_t)$ satisfies the requirements of the lemma \ref{lem:expec-time} above.
\eproof

The proof of the theorem \ref{thm:moments} is quite technical and will be split into several steps formulated as independent lemmata and propositions on semimartingales that may have an interest \textit{per se}. As a matter of fact, we use these intermediate results to prove various results of very different nature.
\begin{lemm}
\label{lem:pgetwo}
Let $f\in\Dom_+(\Gamma)$   tending to infinity, $p\geq 2$,  and 
 $a>0$. Use the abbreviation $A:=\sublev{f}{a}$.  Denote $X_t=f(\xi_t)$ and assume further
that $f^p\in\Dom_+(\Gamma)$. 
If there exists $c>0$ such that
\[\Gamma f^p(x)\leq -cf^{p-2}(x), \forall x\not\in A, \]
then the process defined by 
$Z_t=(X^2_{\tau_A\wedge t} +\frac{c}{p/2} \tau_A\wedge t) ^{p/2}$
 is a non-negative supermartingale.
\end{lemm}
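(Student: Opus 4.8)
The plan is to check non-negativity (which is immediate, since $X_t=f(\xi_t)\ge 0$, $c>0$ and $\tau_A\wedge t\ge 0$) and then to establish the supermartingale property by exhibiting a semimartingale decomposition of $Z$ whose finite-variation part is non-increasing. Write $q:=p/2\ge 1$ and $v:=f^2$, so that $f^p=v^{q}$ and $f^{p-2}=v^{q-1}$, and set $\Theta_t:=\frac{c}{q}(\tau_A\wedge t)$ and $v_x:=f(x)^2$. I would realise $Z_t=(X^2_{\tau_A\wedge t}+\Theta_t)^q$ as the image of the space-time process under the map $\Phi(x,\theta):=(f(x)^2+\theta)^q$. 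Applying the change-of-variables formula for the pure-jump process $(X_t)$ together with the continuous finite-variation drift coming from $\Theta_t$ (exactly in the spirit of the decomposition $dX_t=dM_t+\Gamma f(\xi_{t-})\,dt$ recalled above), I obtain $Z_t=Z_0+M_t+C_t$, with $M$ a local martingale, $C_t=\int_0^t\mathcal D_s\,ds$, and the drift density, evaluated on $\{\xi_{s-}=x\notin A,\ s\le\tau_A\}$ with $\theta=\Theta_{s-}$, equal to
\[
\mathcal D(x,\theta)=c\,(v_x+\theta)^{q-1}+\gamma_x\sum_{y}P_{xy}\big[(v_y+\theta)^q-(v_x+\theta)^q\big].
\]
Local integrability of all the terms (needed to peel off the genuine local martingale $M$) follows from $f^p\in\Dom_+(\Gamma)$ together with the elementary bound $(v_y+\theta)^q\le 2^{q-1}(f(y)^p+\theta^{q})$ and the boundedness of $\theta=\Theta_{s-}$ on $[0,t]$.

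The whole problem then reduces to showing $\mathcal D(x,\theta)\le 0$ for every $x\notin A$ and every $\theta\ge 0$. At $\theta=0$ this is precisely the hypothesis, since $\mathcal D(x,0)=c\,v_x^{q-1}+\Gamma f^p(x)=c\,f^{p-2}(x)+\Gamma f^p(x)\le 0$. The entire difficulty is to propagate this to all $\theta\ge 0$, that is, to show that the one-point drift bound on $f^p$ survives the time-shift $\theta$. Since $x\notin A$ forces $v_x=f(x)^2>a^2>0$, I may divide $\mathcal D(x,\theta)\le 0$ by $(v_x+\theta)^{q-1}>0$ and substitute $\eta:=1/(v_x+\theta)\in(0,1/v_x]$ and $\delta_y:=v_y-v_x$, so that $(v_y+\theta)/(v_x+\theta)=1+\eta\delta_y\ge 0$. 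The inequality becomes $g(\eta)\le 0$, where
\[
g(\eta):=c+\gamma_x\sum_{y}P_{xy}\,h(\eta;\delta_y),\qquad h(\eta;\delta):=\frac{(1+\eta\delta)^q-1}{\eta},
\]
and the case $\theta=0$ is exactly $g(1/v_x)\le 0$.

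The crux is the elementary claim that $\eta\mapsto h(\eta;\delta)$ is non-decreasing on $\{\eta>0:1+\eta\delta\ge 0\}$ for every fixed $\delta$ whenever $q\ge 1$. I would prove this by computing $\partial_\eta h(\eta;\delta)=\psi(\eta\delta)/\eta^2$ with $\psi(z):=qz(1+z)^{q-1}-(1+z)^q+1$, and observing that $\psi(0)=0$ while $\psi'(z)=q(q-1)\,z\,(1+z)^{q-2}$, so that $\psi$ attains its minimum at $z=0$ and hence $\psi\ge 0$ for $z>-1$; consequently $\partial_\eta h\ge 0$. This is exactly where $p\ge 2$, i.e. $q\ge 1$, is indispensable: the sign of $\psi'$, hence the monotonicity of $h$, reverses for $q<1$. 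Granting the claim, $g$ is a non-negative combination of the non-decreasing functions $h(\cdot;\delta_y)$ plus a constant, hence non-decreasing; therefore $\eta\le 1/v_x$ yields $g(\eta)\le g(1/v_x)\le 0$, i.e. $\mathcal D(x,\theta)\le 0$ for all $\theta\ge 0$.

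Finally, $\mathcal D\le 0$ makes $C_t=\int_0^t\mathcal D_s\,ds$ non-increasing, so $Z=Z_0+M+C$ is a non-negative local supermartingale; since a non-negative local supermartingale is a true supermartingale (localise $M$, then pass to the limit by Fatou using $Z\ge 0$), the conclusion follows. The main obstacle is precisely the content of the second and third paragraphs: transferring the drift bound on $f^p$ at a point to the time-shifted functional, which is what forces, and is made possible by, the assumption $p\ge 2$.
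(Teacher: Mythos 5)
Your proof is correct, but it takes a genuinely different route from the paper's. The paper works directly with the conditional increment $\BbE(dZ_t|\cF_{t-})$: it applies It\^o's formula for the pure-jump process, then the \emph{conditional Minkowski inequality} in $L^{p/2}$ to get $\BbE\bigl((X_t^2+s)^{p/2}\mid\cF_{t-}\bigr)\le\bigl(\BbE(X_t^p\mid\cF_{t-})^{2/p}+s\bigr)^{p/2}$, feeds in the drift hypothesis $\BbE(dX_t^p\mid\cF_{t-})\le -cX_{t-}^{p-2}\,dt$, and finishes with the first-order bound $(1-u)^{2/p}\le 1-\tfrac{2}{p}u$; the hypothesis $p\ge 2$ enters through Minkowski. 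You instead compute the full drift density $\mathcal D(x,\theta)$ of the space-time generator applied to $(f(x)^2+\theta)^{p/2}$, observe that $\mathcal D(x,0)\le 0$ is exactly the hypothesis, and propagate this to all $\theta\ge 0$ by showing that $\eta\mapsto\bigl((1+\eta\delta)^{p/2}-1\bigr)/\eta$ is non-decreasing --- a deterministic monotonicity statement resting on convexity of $u\mapsto u^{p/2}$, which is where $p\ge 2$ enters for you. The two arguments are cousins (both ultimately exploit the same convexity), but yours replaces the paper's somewhat informal manipulation of infinitesimals inside nonlinear functions by a rigorous pointwise inequality at the level of the generator, and you also make explicit the final localization step (a non-negative local supermartingale is a true supermartingale via Fatou), which the paper leaves implicit. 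The price is a longer computation; the gain is that every step is an honest inequality rather than a first-order expansion in $dt$.
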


\begin{proof}
Introducing the predictable decomposition $1=\id_{\{\tau_{A}<t\}}+
\id_{\{\tau_A\ge t\}}$, we get 
$\BbE(dZ_t |\cF_{t-})= \BbE(d(X^2_{t} +\frac{c_1}{p/2}  t) ^{p/2}|\cF_{t-})
\id_{\{\tau_A\ge t\}}$. 
Now, $(X_t)$ is a pure jump process, hence by applying It\^o formula, reading for any $g\in C^2$ and $(S_t)$ a semimartingale, $dg(S_t)= g'(S_{t-}) dS_t^c+ \Delta
g(S_t)$, where $(S^c_t)$ denotes the continuous component of $(S_t)$,  we get
\[d(X^2_{t} +\frac{c}{p/2} t) ^{p/2}=
 c(X^2_{t-} +\frac{c}{p/2} t) ^{p/2-1}dt + (X^2_{t} +\frac{c}{p/2} t) ^{p/2}
-(X^2_{t-} +\frac{c}{p/2} t) ^{p/2}.
\]
Writing the semimartingale decomposition for the process $(X_t^p)$, 
we remark that the hypothesis of the lemma implies that 
\[\BbE(dX_t^p|\cF_{t-})=
\Gamma f^p (\xi_{t-})  dt\leq -c X^{p-2}_{t-} dt \ \textrm{on the event } \{\tau_A\geq t\}.\]
Applying conditional Minkowski inequality and the supermartingale hypothesis, we get, on the set $\{\tau_A\geq t\}$,
\begin{eqnarray*}
\BbE((X^2_{t} +\frac{c}{p/2} t) ^{p/2}|\cF_{t-})
&\leq& [\BbE((X^p_{t} |\cF_{t-})^{2/p} +\frac{c}{p/2} t]^{p/2}\\
&=& [(X^p_{t-}+\BbE(dX^p_t |\cF_{t-})^{2/p} +\frac{c}{p/2} t]^{p/2}\\
\cut{&\leq& [X^p_{t-}-cX_{t-}^{p-2} dt)^{2/p} +\frac{c}{p/2} t]^{p/2}\\}
&\leq& ((X^2_{t-}(1-\frac{c}{ X_{t-}^2}dt)^{2/p}+\frac{c}{p/2} t ) ^{p/2}\\
\cut{&\leq& ((X^2_{t-}(1-\frac{c}{p/2}\frac{1}{ X_{t-}^2}dt)+\frac{c}{p/2} t ) ^{p/2}\\}
&\leq& (X^2_{t-}-\frac{c}{p/2}dt+\frac{c}{p/2} t ) ^{p/2}.
\end{eqnarray*}
Hence, on the event $\{\tau_A\geq t\}$, we have the estimate
\[\BbE(dZ_t |\cF_{t-})\leq c(X_{t-}^2 +\frac{c}{p/2} t)^{p/2-1}dt +
(X_{t-}^2 +\frac{c}{p/2} t -\frac{c}{p/2} dt)^{p/2}-
(X_{t-}^2 +\frac{c}{p/2} t )^{p/2}. \]
Simple expansion of the remaining differential forms (containing now only $\cF_{t-}$-measurable random variables)
yields $\BbE(dZ_t |\cF_{t-})\leq 0$.
\end{proof}

\begin{coro}
\label{cor:pgetwo}
Let $f\in\Dom_+(\Gamma)$   tending to infinity, $p\geq 2$,  and $a>0$.
Use the abbreviation
 $A:=\sublev{f}{a}$. 
 Denote $X_t=f(\xi_t)$ and assume further
that $f^p\in\Dom_+(\Gamma)$. 
If there exists $c>0$ such that
\[\Gamma f^p(x)\leq -cf^{p-2}(x), \forall x\not\in A, \]
then there exists $c'>0$ such that
\[ \BbE_x(\tau_A^q)\leq c' f(x)^{2q} \ \textrm{for all $q\leq p/2$ and all $x\in\BbX$.}\]
\end{coro}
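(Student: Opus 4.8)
The plan is to read the conclusion off directly from the supermartingale constructed in lemma \ref{lem:pgetwo}, combining the supermartingale inequality with a standard moment comparison; the lemma has already done the analytic work, so this corollary is essentially an extraction argument. Writing $A=\sublev{f}{a}$ and working under $\BbP_x$, the process $Z_t=(X^2_{\tau_A\wedge t}+\frac{c}{p/2}\,\tau_A\wedge t)^{p/2}$ is a non-negative supermartingale with deterministic initial value $Z_0=(f(x)^2)^{p/2}=f(x)^p$, so the supermartingale property gives $\BbE_x(Z_t)\leq f(x)^p$ for every $t\geq 0$. First I would discard the non-negative term $X^2_{\tau_A\wedge t}$ to obtain the lower bound $Z_t\geq(\frac{c}{p/2}\,\tau_A\wedge t)^{p/2}$, whence
\[\Big(\tfrac{c}{p/2}\Big)^{p/2}\BbE_x\big((\tau_A\wedge t)^{p/2}\big)\leq f(x)^p .\]

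Next I would let $t\to\infty$ and invoke monotone convergence, since $(\tau_A\wedge t)^{p/2}$ increases pointwise to $\tau_A^{p/2}$; this yields the top-moment estimate
\[\BbE_x\big(\tau_A^{p/2}\big)\leq\Big(\tfrac{p/2}{c}\Big)^{p/2}f(x)^p=\Big(\tfrac{p/2}{c}\Big)^{p/2}\big(f(x)^2\big)^{p/2},\]
which in particular forces $\tau_A<\infty$ $\BbP_x$-a.s.\ and settles the borderline case $q=p/2$. For a general $0<q\leq p/2$ I would then apply Lyapunov's inequality (the nesting of $L^s$-norms for the non-negative variable $\tau_A$), namely $\BbE_x(\tau_A^q)\leq\big(\BbE_x(\tau_A^{p/2})\big)^{2q/p}$, and substitute the top-moment bound; since $u\mapsto u^{2q/p}$ is increasing, this gives $\BbE_x(\tau_A^q)\leq\big(\frac{p/2}{c}\big)^{q}f(x)^{2q}$.

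Finally, to produce a single constant $c'$ independent of $q$ as the statement demands, I would set $c'=\max\big(1,(\frac{p/2}{c})^{p/2}\big)$: the map $q\mapsto(\frac{p/2}{c})^{q}$ is monotone, so its supremum over $0<q\leq p/2$ is controlled by the value at one of the endpoints, and $c'$ dominates it uniformly. The case $x\in A$ is trivial, since then $\tau_A=0$ and both sides vanish (or the inequality is $0\leq 0$).

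I do not expect any single inequality to be a genuine obstacle here, because lemma \ref{lem:pgetwo} already encapsulates the hard part (the It\^o expansion and the conditional Minkowski step that made $Z_t$ a supermartingale). The only points requiring care are bookkeeping ones: applying the moment-comparison step to $\tau_A$ rather than to the truncation $\tau_A\wedge t$, which is legitimate only \emph{after} the monotone-convergence passage $t\to\infty$, and checking that the final constant $c'$ is genuinely free of $q$. Both are handled by the remarks above.
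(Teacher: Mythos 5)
Your proof is correct and follows essentially the same route as the paper: both rest entirely on the supermartingale $Z_t$ supplied by lemma \ref{lem:pgetwo} and extract the moment bound from $\BbE_x(Z_t)\leq Z_0=f(x)^p$. The only cosmetic difference is that the paper applies Jensen at the process level (concluding that $Y_t^q$ is itself a supermartingale for every $q\leq p/2$ before letting $t\to\infty$), whereas you pass to the limit at the top exponent first and then apply the $L^q$--$L^{p/2}$ comparison to $\tau_A$ directly; your version also tracks the $q$-dependence of the constant $c'$ more carefully than the paper does.
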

\begin{proof}
Without loss of generality, we can assume that $x\in A^c$ since otherwise
the corollary holds trivially.
Denoting by $Y_t=X_{t\wedge\tau_A}^2 +\frac{c}{p/2} t\wedge\tau_A$, we observe that $Z_t=Y_t^{p/2}$ is a non-negative supermartingale by virtue of the lemma \ref{lem:pgetwo}. Since the function $\BbR_+\ni w
\mapsto w^{2q/p}\in \BbR_+$ is increasing and concave for $q\leq p/2$, it follows that $Y_t^q$ is also a supermartingale. Hence, 
\[\frac{c}{p/2} \BbE_x[(t\wedge\tau_A)^q]
\leq \BbE_x(Y_t^q)\leq \BbE_x (Y_0^q)= f(x)^{2q}.\]
We conclude by the dominated convergence theorem on identifying $c'=\frac{p}{2c}$.
\end{proof}

\begin{prop}
\label{pro:pletwo}
Let $f\in\Dom_+(\Gamma)$   tending to infinity, $0<p\leq 2$,  and $a>0$.
Use the abbreviation
$A:=\sublev{f}{a}$. Denote $X_t=f(\xi_t)$ and assume further
that $f^p\in\Dom_+(\Gamma)$. 
If there exists $c>0$ such that
\[\Gamma f^p(x)\leq -cf^{p-2}(x), \forall x\not\in A, \]
then the process, defined by 
$Z_t=X^p_{\tau_A\wedge t} +\frac{c}{q} (\tau_A\wedge t) ^{q}$,
 satisfies 
 \[\BbE_x(Z_t)\leq c'' f(x)^p, \ \textrm{for all}\ q\in]0, p/2].\]
\end{prop}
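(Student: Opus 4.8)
The plan is to exploit the semimartingale structure of $X^p_t=f^p(\xi_t)$ and to split the claim into two independent bounds: a supermartingale estimate controlling the term $X^p_{\tau_A\wedge t}$, and a quantitative tail estimate for $\tau_A$ controlling the term $(\tau_A\wedge t)^q$. Write $A=\sublev{f}{a}$ and $\tau=\tau_A$. Applying the semimartingale decomposition recalled at the beginning of this section to $f^p\in\Dom_+(\Gamma)$, the stopped process satisfies $X^p_{\tau\wedge t}=f(x)^p+M_{\tau\wedge t}+\int_0^{\tau\wedge t}\Gamma f^p(\xi_{s-})\,ds$; since $\xi_{s-}\notin A$ on $\{s\le\tau\}$, the hypothesis gives $\Gamma f^p(\xi_{s-})\le -cX_{s-}^{p-2}\le0$ there. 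After localising the martingale part (for instance by $J_N$ together with $\inf\{s:X_s\ge N\}$) and passing to the limit, I obtain that $(X^p_{\tau\wedge t})$ is a non-negative supermartingale, whence $\BbE_x(X^p_{\tau\wedge t})\le f(x)^p$, and, using $X^p\ge0$, the energy estimate $c\,\BbE_x\big(\int_0^{\tau\wedge t}X_{s-}^{p-2}\,ds\big)\le f(x)^p$. The first bound disposes of the $X^p$ part of $Z_t$; the whole difficulty is the time part $\frac cq\BbE_x((\tau\wedge t)^q)$. Unlike the case $p\ge2$ of lemma \ref{lem:pgetwo}, this cannot be absorbed into a genuine supermartingale: a direct computation gives $\BbE(dZ_t|\cF_{t-})\le(-cX_{t-}^{p-2}+ct^{q-1})\id_{\{\tau\ge t\}}\,dt$, whose sign is indefinite.

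The key step is an interpolation bound on the tail of $\tau$. By Tonelli, $\BbE_x\big(\int_0^{\tau\wedge t}X_{s-}^{p-2}\,ds\big)=\int_0^t m_1(s)\,ds$ with $m_1(s)=\BbE_x(X_{s-}^{p-2}\id_{\{\tau>s\}})$; set also $m_2(s)=\BbE_x(X_{s-}^{p}\id_{\{\tau>s\}})\le\BbE_x(X^p_{\tau\wedge s})\le f(x)^p$. Since $0=\frac p2(p-2)+(1-\frac p2)p$, Hölder's inequality (log-convexity of moments) applied to the sub-probability measure $\id_{\{\tau>s\}}\,d\BbP_x$ yields
\[ \BbP_x(\tau>s)=\BbE_x(X_{s-}^{0}\id_{\{\tau>s\}})\le m_1(s)^{p/2}\,m_2(s)^{1-p/2}\le m_1(s)^{p/2}\,f(x)^{\,p-p^2/2}, \]
that is, $m_1(s)\ge f(x)^{-(2-p)}\BbP_x(\tau>s)^{2/p}$. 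Feeding this into the energy estimate gives $c\,f(x)^{-(2-p)}\int_0^\infty\BbP_x(\tau>s)^{2/p}\,ds\le f(x)^p$, hence $\int_0^\infty\BbP_x(\tau>s)^{2/p}\,ds\le f(x)^2/c$. As $s\mapsto\BbP_x(\tau>s)$ is non-increasing, $t\,\BbP_x(\tau>t)^{2/p}\le\int_0^t\BbP_x(\tau>s)^{2/p}\,ds\le f(x)^2/c$, which is the desired tail bound $\BbP_x(\tau>t)\le\big(f(x)^2/(ct)\big)^{p/2}$.

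With this tail bound, for any $q<p/2$,
\[ \BbE_x(\tau^q)=q\int_0^\infty t^{q-1}\BbP_x(\tau>t)\,dt\le q\int_0^1 t^{q-1}\,dt+q\,c^{-p/2}f(x)^p\int_1^\infty t^{q-1-p/2}\,dt, \]
where the last integral converges precisely because $q-1-\frac p2<-1$. Since $f(x)>a$ on $A^c$, both terms are bounded by a constant multiple of $f(x)^p$, so $\BbE_x(\tau^q)\le c_1 f(x)^p$; a fortiori $\BbE_x((\tau\wedge t)^q)\le c_1 f(x)^p$, and adding the supermartingale bound gives $\BbE_x(Z_t)\le(1+cc_1/q)f(x)^p=:c''f(x)^p$.

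The main obstacle is twofold. First, the rigour: the decomposition and the energy estimate require localising the martingale part (using $f^p\in\Dom_+(\Gamma)$ and the finite horizon $\tau\wedge t$) before one may pass to expectations, and the interpolation must be applied to the genuinely finite measures $\id_{\{\tau>s\}}\,d\BbP_x$. Second, and more seriously, the computation breaks down exactly at the endpoint $q=p/2$, where $\int_1^\infty t^{q-1-p/2}\,dt=\int_1^\infty t^{-1}\,dt$ diverges logarithmically; the tail decay $O(t^{-p/2})$ is then just barely insufficient. The endpoint therefore has to be recovered separately --- either by applying the sharp ($L\log L$-type) maximal inequality to the supermartingale $(X^p_{\tau\wedge t})$ via the pathwise bound $\tau\wedge t\le\big(\sup_{s\le\tau\wedge t}X_{s-}\big)^{2-p}\int_0^{\tau\wedge t}X_{s-}^{p-2}\,ds$, or by a limiting argument --- and this boundary case is the genuinely delicate point of the proposition.
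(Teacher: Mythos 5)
Your argument is correct on $]0,p/2[$ and yields the stated bound there, but it follows a genuinely different route from the paper's. The paper works directly on the drift of $Z_t$: from $\BbE(dZ_t|\cF_{t-})\leq c\,dt\,\id_{\{\tau_A\geq t\}}(-X_{t-}^{p-2}+t^{q-1})$ it splits the event $\{\tau_A\geq t\}$ according to whether $X_{t-}\leq t^v$ or $X_{t-}>t^v$ for an interpolation exponent $v\in]\frac{q}{p},\frac{1-q}{2-p}[$; the first piece has non-positive drift, the second is controlled by Markov's inequality applied to the supermartingale $(X^p_{t\wedge\tau_A})$, and one then integrates $t^{q-1-vp}$ over $[a^{1/v},\infty[$. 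You instead extract from that same supermartingale an occupation-time (energy) estimate, interpolate by H\"older between $X^{p-2}$ and $X^{p}$ to convert it into the explicit tail bound $\BbP_x(\tau_A>t)\leq (f(x)^2/(ct))^{p/2}$, and integrate the tail. Your intermediate tail estimate is strictly more informative than anything the paper records and is reusable; the price is the endpoint $q=p/2$, where $\int_1^\infty t^{q-1-p/2}\,dt$ diverges. This is not, however, a defect relative to the paper: for $p<2$ and $q=p/2$ the paper's own admissible interval $]\frac{q}{p},\frac{1-q}{2-p}[$ degenerates to $]\frac12,\frac12[=\emptyset$, so its argument as written also covers only $q<p/2$, and only $q<p/2$ is used in statement 1 of theorem \ref{thm:moments}. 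Your localisation of the martingale part (via $J_N$ and level crossings, then monotone convergence for the compensator and Fatou for $X^p$) is the appropriate way to make the energy estimate rigorous and is at least as careful as the paper's treatment.
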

\begin{proof}
Since $d(X_t^p +\frac{c}{q} t^q)=dX_t^p+c t^{q-1}dt$, we have
\[\BbE(dZ_t|\cF_{t-})\leq \BbE(dZ_t|\cF_{t-})\id_{\{\tau_A\geq t\}}\leq
c dt \id_{\{\tau_A\geq t\}} (-X^{p-2}_{t-}+t^{q-1}).\]
Now, $\frac{q}{p}\leq \frac{1}{2}\leq\frac{1-q}{2-p}$. Choosing $v\in ] 
\frac{q}{p},\frac{1-q}{2-p}[$, we write
\begin{eqnarray*}
%\label{eq-diff-ineq}
\BbE(dZ_t|\cF_{t-})&\leq &c
dt \id_{\{\tau_A\geq t\}}(-X_{t-}^{p-2}+t^{q-1})\id_{\{X_{t-}\in]a, t^v]\}}\\
&&+ cdt \id_{\{\tau_A\geq t\}}(-X_{t-}^{p-2}+t^{q-1})\id_{\{X_{t-}\in]t^v,+\infty]\}}.
\end{eqnarray*}
For $X_{t-}\leq t^v$, the first term of the right hand side of the previous inequality is non-positive; as for the second, the condition $X_{t-}>t^v$ implies
that $-X_{t-}^{p-2}+t^{q-1}\leq t^{q-1}$. Combining the latter with Markov inequality guarantees 
that, on the set $\{X_{t-}>t^v\}$, we have
$\BbE_x(dZ_t)\leq c dt \id_{\{\tau_A\geq t\}} \frac{f(x)^p}{t^{vp}}t^{q-1}.$
Integrating this differential inequality  yields
$\BbE_x(Z_t)\leq
 c f^p(x)\int_{a^{1/v}}^{\infty} \frac{dt}{t^{vp+1-q}}$; the condition $v>q/p$
insures the finiteness of the last integral proving thus the 
lemma with $c''=c\int_{a^{1/v}}^{\infty} \frac{dt}{t^{vp+1-q}}$.
\end{proof}

\begin{coro}
\label{cor:pletwo}
Under the same conditions as in proposition \ref{pro:pletwo},
there exists $c'''>0$ such that 
$\BbE_x (\tau_A^q) \leq c''' f(x)^p, \forall q\in]0, p/2].$
\end{coro}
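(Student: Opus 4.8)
The plan is to read the corollary off directly from proposition \ref{pro:pletwo} by discarding the non-negative spatial term and passing to the limit in time. Recall that for each fixed $q\in]0,p/2]$ the proposition supplies a constant $c''>0$ with
\[
\BbE_x(Z_t)=\BbE_x\!\left(X^p_{\tau_A\wedge t}+\tfrac{c}{q}(\tau_A\wedge t)^q\right)\leq c''f(x)^p,
\]
and this bound is uniform in $t\geq 0$.

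First I would exploit the non-negativity of the process $X_t=f(\xi_t)$, which holds because $f\in\Dom_+(\Gamma)$ is $\BbR_+$-valued, so that $X^p_{\tau_A\wedge t}\geq 0$. Dropping this term from $Z_t$ leaves
\[
\tfrac{c}{q}\,\BbE_x\!\left((\tau_A\wedge t)^q\right)\leq \BbE_x(Z_t)\leq c''f(x)^p,
\]
whence $\BbE_x\!\big((\tau_A\wedge t)^q\big)\leq \tfrac{q}{c}\,c''f(x)^p$ for every $t\geq 0$. The only genuine step is then the passage $t\to\infty$: since $\tau_A\wedge t\uparrow\tau_A$ and $w\mapsto w^q$ is increasing on $\BbR_+$, the family $(\tau_A\wedge t)^q$ increases to $\tau_A^q$, so the monotone convergence theorem gives
\[
\BbE_x(\tau_A^q)=\lim_{t\to\infty}\BbE_x\!\big((\tau_A\wedge t)^q\big)\leq \tfrac{q}{c}\,c''f(x)^p.
\]

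I would finish by setting $c''':=\tfrac{q}{c}\,c''$; should a prefactor uniform in $q$ be desired, the inequality $q\leq p/2$ lets one replace it by $\tfrac{p}{2c}\,c''$. There is essentially no obstacle here — the argument is the exact analogue of the deduction of corollary \ref{cor:pgetwo} from lemma \ref{lem:pgetwo} — and the sole point meriting a word of justification is the legitimacy of the monotone convergence, which is immediate from the monotonicity of $t\mapsto\tau_A\wedge t$ together with that of $w\mapsto w^q$.
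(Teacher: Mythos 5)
Your argument is exactly the paper's: discard the non-negative term $X^p_{\tau_A\wedge t}$ from $Z_t$, apply proposition \ref{pro:pletwo} to bound $\BbE_x\bigl((\tau_A\wedge t)^q\bigr)$ uniformly in $t$, and pass to the limit (the paper invokes dominated convergence where your monotone convergence is the more apt — and correct — justification, since $(\tau_A\wedge t)^q\uparrow\tau_A^q$). The proof is correct and coincides with the paper's, down to the constant $c'''=\frac{c''q}{c}$.
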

\begin{proof}
Since $X_t$ is non-negative, $\frac{q}{c}Z_t\geq (t\wedge\tau_A)^q$.  
By the previous proposition, 
$\BbE_x[(t\wedge\tau_A)^q]\leq \frac{q}{c}\BbE_x(Z_t)\leq
c'' \frac{q}{c}  f(x)^p$. We conclude by the dominated convergence theorem on identifying $c'''= \frac{c''q}{c}$.
\end{proof}

\begin{rema}{}
All the propositions, lemmata, and corollaries shown so far allow to prove statement 1 of the theorem
\ref{thm:moments}.
The subsequent propositions are needed for statement 2 of this theorem.
Notice also that the following proposition \ref{pro:gener-foster} is very important and tricky. It provides us with a generalisation of the theorem 3.1 of Lamperti \cite{Lam2} and serves twice in this paper: one first time to establish conditions for some moments of passage time to be infinite
(statement 2 of theorem \ref{thm:moments}) and once more in a very different context, namely for finding conditions for the chain not to implode (statement 2 of theorem  \ref{thm:implo}).
\end{rema}
\begin{prop}
\label{pro:gener-foster}
Let  $(\Omega, \cG,(\cG_t)_t, \BbP)$ be a filtered probability space and 
$(Y_t)$ be a $(\cG_t)$-adapted process taking values in an unbounded subset of $\BbR_+$.
Let $a>0$ and $T_a=\inf\{t\geq 0: Y_t\leq a\}$.
Suppose that there exist constants $c_1>0$ and $c_2>0$ such that
\begin{enumerate}
\item
$\BbE(dY_t|\cG_{t-})\geq - c_1 dt$ on the event $\{T_a>t\}$, and 
\item there exists $r>1$ such that
$\BbE(dY^r_t|\cG_{t-})\leq  c_2 Y_{t-}^{r-1} dt$ on the event $\{T_a>t\}$.
\end{enumerate}
Then, for all $\alpha\in]0,1[$, there exist $\epsilon>0$ and $\delta>0$ such that 
\[\forall t>0: \BbP(T_a>t+\epsilon Y_{t\wedge T_a}| \cG_t)\geq 1-\alpha, \ \textrm{on the event}\ 
\{T_a>t; Y_t>a(1+\delta)\}.\]
\end{prop}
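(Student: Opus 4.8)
The plan is to prove the conditional statement by first reducing it, via the time-homogeneity of hypotheses 1 and 2 (which hold at \emph{every} time on $\{T_a>t\}$), to the case $t=0$: I work throughout with the conditional expectation $\BbE(\cdot\mid\cG_t)$ for the process restarted at time $t$ on $\{T_a>t\}$, where it takes the value $Y_t=:y>a(1+\delta)$. With $\tau:=\epsilon y$ I would first record the deterministic reformulation
\[
\{T_a\le \tau\}=\{Y_{\tau\wedge T_a}\le a\},
\]
since on $\{T_a>\tau\}$ one has $Y_{\tau\wedge T_a}=Y_\tau>a$ while on $\{T_a\le\tau\}$ one has $Y_{\tau\wedge T_a}=Y_{T_a}\le a$. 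Hence, writing $W:=Y_{\tau\wedge T_a}$, the whole proposition reduces to producing $\epsilon,\delta$ (depending only on $\alpha,c_1,c_2,r$) for which $\BbP(W>a)\ge 1-\alpha$; that is, I must show that, started high, the stopped process is overwhelmingly likely to remain large up to the deterministic horizon $\tau$ proportional to its starting height.

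The two hypotheses feed exactly the two estimates needed to control the law of $W$. From hypothesis 1, the process $N_t:=Y_{t\wedge T_a}+c_1(t\wedge T_a)$ is a submartingale, so optional stopping at the deterministic time $\tau$ gives the mean lower bound
\[
\BbE(W)\ge y-c_1\tau=y(1-c_1\epsilon).
\]
From hypothesis 2, the process $L_t:=Y_{t\wedge T_a}^r-c_2\int_0^{t\wedge T_a}Y_{s-}^{r-1}\,ds$ is a supermartingale, whence, with $\phi(t):=\BbE(Y_{t\wedge T_a}^r)$ and Jensen's inequality $\BbE(Y^{r-1})\le(\BbE Y^r)^{(r-1)/r}$ (valid since $0<(r-1)/r<1$), I obtain the Gronwall-type integral inequality $\phi(t)\le y^r+c_2\int_0^t\phi(s)^{(r-1)/r}\,ds$. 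Comparison with the solution of $\psi'=c_2\psi^{(r-1)/r}$, $\psi(0)=y^r$, yields $\phi(t)\le\bigl(y+\tfrac{c_2}{r}t\bigr)^r$, and in particular the $r$-th moment upper bound $(\BbE W^r)^{1/r}\le y\bigl(1+\tfrac{c_2\epsilon}{r}\bigr)$.

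To convert the two moment bounds into a probability bound I would use a Paley--Zygmund/H\"older argument rather than Chebyshev. Splitting $\BbE(W)=\BbE(W\id_{\{W\le a\}})+\BbE(W\id_{\{W>a\}})\le a+(\BbE W^r)^{1/r}\,\BbP(W>a)^{(r-1)/r}$ and rearranging gives
\[
\BbP(W>a)\ge\left(\frac{\BbE(W)-a}{(\BbE W^r)^{1/r}}\right)^{r/(r-1)}\ge\left(\frac{(1-c_1\epsilon)-\frac{1}{1+\delta}}{1+\frac{c_2\epsilon}{r}}\right)^{r/(r-1)},
\]
where I used $a<y/(1+\delta)$ on the event $\{Y_t>a(1+\delta)\}$ together with the two estimates above. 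The bracketed base tends to $1$ as $\epsilon\downarrow0$ and $\delta\uparrow\infty$, while the exponent $r/(r-1)$ is a fixed finite number; therefore, given $\alpha\in]0,1[$, I can first fix $\delta$ large and then $\epsilon$ small so that the right-hand side is $\ge1-\alpha$. Undoing the reduction turns this into the asserted conditional inequality, since $Y_{t\wedge T_a}=Y_t$ on $\{T_a>t\}$.

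The delicate points, on which I expect to spend the real effort, are two. First, the Gronwall step must be made rigorous at the level of c\`adl\`ag processes: the compensator integral involves $Y_{s-}$ and the indicator $\id_{\{T_a>s\}}$, so one has to justify passing from $\BbE(Y_{s-}^{r-1}\id_{\{T_a>s\}})$ to $\phi(s)^{(r-1)/r}$ through the appropriate left-limit and monotonicity arguments before invoking the comparison principle. Second, all the (super/sub)martingale manipulations must be carried out for the \emph{conditional} laws given $\cG_t$ and for the process stopped at $T_a$ and shifted to start at time $t$; the time-homogeneity of hypotheses 1 and 2 is precisely what guarantees that the shifted stopped process still satisfies them, so that the $t=0$ computation transfers verbatim. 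Everything else is the routine bookkeeping of constants.
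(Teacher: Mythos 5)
Your proposal is correct and its skeleton coincides with the paper's proof: the same reduction of the conditional probability to $\BbP(Y_{\tau\wedge T_a}>a)$ for the horizon $\tau=\epsilon Y_t$, the same reverse-H\"older splitting $\BbE(W)\le a+(\BbE W^r)^{1/r}\BbP(W>a)^{(r-1)/r}$, the same first-moment lower bound $\BbE(W)\ge y(1-c_1\epsilon)$ from hypothesis 1, and the same final choice of $\delta$ large then $\epsilon$ small. The one place where you genuinely diverge is the $r$-th moment upper bound, which is the technical heart of the argument: the paper constructs, for an arbitrary $\cG_t$-optional $S$, the functional $F_S(s)=\BbE[(Y_{t+s\wedge\sigma}+c_3S-c_3 s\wedge\sigma)^r\mid\cG_t]$ with $c_3=c_2/r$ and shows via It\^o plus conditional Minkowski that it is nonincreasing, yielding $(\BbE(Y^r_{t+\epsilon Y_t}\mid\cG_t))^{1/r}\le Y_t(1+c_3\epsilon)$ directly at the level of conditional laws; you instead pass to the scalar function $\phi(s)=\BbE(Y^r_{s\wedge T_a})$, use Jensen to close the integral inequality $\phi(t)\le y^r+c_2\int_0^t\phi(s)^{(r-1)/r}\,ds$, and invoke a Bihari--Gronwall comparison with $\psi'=c_2\psi^{(r-1)/r}$. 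Both deliver the identical bound $y(1+c_2\epsilon/r)$. Your route is more standard and arguably easier to check, but it requires exactly the care you flag: the comparison must be run for the conditional expectations given $\cG_t$ (not just unconditionally, since the final inequality is asserted $\cG_t$-measurably on the event $\{T_a>t;\,Y_t>a(1+\delta)\}$), and the passage from $\BbE(Y_{s-}^{r-1}\id_{\{T_a>s\}})$ to $\phi(s)^{(r-1)/r}$ needs the a.e.\ identification of $Y_{s-}$ with $Y_s$. The paper's supermartingale construction avoids the ODE comparison lemma but buys that at the price of the somewhat opaque manipulation of $F_S$. As a minor point, your final display has the denominator $1+c_2\epsilon/r$ where the paper's intermediate display writes $1-c_3\epsilon$; yours is the consistent form, and the paper's own concluding line reverts to $(1+c_3\epsilon)^{-r/(r-1)}$, so this is only a typo there.
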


\begin{rema}{}
The meaning of the previous proposition \ref{pro:gener-foster} is the following. If the process $(Y_t)$ has average increments bounded from below by a constant $-c_1$, it is intuitively appealing to suppose that the average time
of reaching $0$ is of the same order of magnitude as $Y_0$. However this intuition proves false if the increments are wildly unbounded since then $0$ can be reached in one step. The condition 2, by imposing some control on $r$-moments of the increments with $r>1$,  prevents this from occurring. 
It is in fact established that if $T_a>t$, the remaining time $T_a-t$ to reach $A_a:=[0,a]$ exceeds $\epsilon Y_t$ with probability $1-\alpha$; more precisely, for every $\alpha$ we can chose $\epsilon$ such that $\BbP(T_a-t>\epsilon Y_t|\cG_t)\geq 1-\alpha$.
\end{rema}

  \vskip3mm
\noindent\textit{Proof of  proposition \ref{pro:gener-foster}.}
Let $\sigma=(T_a-t)\id_{\{T_a\geq a\}}$; then for all $s>0$ we have
$\{\sigma<s\}=\{T_a\geq t\}\cap\{T_a<t+s\}\in \cG_{t+s-}$.
To prove the proposition, it is enough to establish 
$\BbP(\sigma >\epsilon Y_t|\cG_t) \geq 1-\alpha$ on the set 
$\{\sigma>0; Y_t>a(1+\delta)\}$.
On this latter set: $\BbP(\sigma>\epsilon Y_t|\cG_t)= 
\BbP(Y_{t+(\epsilon Y_t)\wedge \sigma}>a|\cG_t)$, because once 
the process $Y_{t+(\epsilon Y_t) \wedge \sigma}$ enters in $A_a$, it remains there for ever, due to the stopping by $\sigma$.
 On defining
$U:=Y_{(\epsilon Y_t)\wedge\sigma+t}$ one has 
\begin{eqnarray*}
\BbE(U|\cG_t)&=&\BbE(U\id_{\{U\leq a\}}|\cG_t)+\BbE(U\id_{U> a\}}|\cG_t)\\&\leq& a+(\BbE(U^r|\cG))^{1/r}(\BbP(U>a|\cG_t))^{1-1/r};
\end{eqnarray*}
 therefore
\[\BbP(U>a|\cG_t))\geq
\left[\frac{(\BbE(U|\cG_t)-a)_+}{(\BbE(U^r|\cG))^{1/r}}\right]^{r/(r-1)}.\]
To minorise the numerator, we observe that
\[\BbE(U|\cG_t)=\BbE(Y_{t+\epsilon Y_t} -Y_t|\cG_t) +Y_t=
\int_t^{t+\epsilon Y_t} \BbE(dY_s|\cG_t) +Y_t\geq -c_1 \epsilon Y_t +Y_t.\]
To majorise the denominator $\BbE(U^r|\cG)=
\BbE(Y^r_{t+(\epsilon Y_t)\wedge \sigma}|\cG_t)$, we must be able to majorise 
$\BbE(Y^r_{t+s\wedge\sigma}|\cG_t)$ for arbitrary $s>0$. 
Let $t>0$ be arbitrary and $S$ be a $\cG_t$-optional random variable, $S>0$.
For $c_3=c_2/r$ and 
any $s\in]0,S]$, define  
\[F_S(s)=\BbE[(Y_{t+s\wedge\sigma} + c_3S-c_3s\wedge\sigma)^r|\cG_t].\] 
We shall show that $F_S(s)\leq F_S(s-)$ for all $s\in]0,S]$.  
It is enough to show this inequality on $\{\sigma>s\}$ since otherwise
$F_S(s)=F_S(s-)$ and there is nothing to prove. 
To show that $F_S$ is decreasing in $]0,S]$, it is enough to show 
that $\BbE(d\Xi_s|\cG_{t+s-})\leq 0$ for all $s\in]0,S]$, where
$\Xi_s=(Y_{t+s\wedge\sigma} + c_3S-c_3s\wedge\sigma)^r$.
Now, on $\{\sigma>s\}$, by use of It\^o's formula, we get
\[d\Xi_s= -rc_3(Y_{t+s-}+c_3 S-c_3 s)^{r-1} ds
+(Y_{t+s} + c_3S-c_3s)^r
-(Y_{t+s-} + c_3S-c_3s)^r.\]
Moreover, using  Minkowski inequality, we get
\[\BbE[(Y_{t+s} + c_3S-c_3s)^r|\cG_{t+s-}]\leq
\left[\BbE(Y_{t+s}^r|\cG_{t+s-})^{1/r}
+c_3S-c_3s\right]^r,\]
and by use of the hypothesis
\[\BbE(Y^r_t|\cG_{t-})\leq Y^r_{t-} +c_2 Y^{r-1}_{t-}  \id_{\{\rho>t\}} dt=
 Y^r_{t-}\left(1+ \frac{c_2}{Y_{t-}}\id_{\{T_a>t\}}dt\right)\leq 
 \left(Y_{t-}+c_3\id_{\{T_a>t\}}dt\right)^r. \]
Therefore, $\BbE(d\Xi_s|\cG_{t+s-})\leq 0$ for all $s\in]0,S]$.
Subsequently, for all $S>0$; 
\[Y_{t+S}^r=F_S(S)\leq \lim_{s\rightarrow 0+} F_S(s)=(Y_t+c_3 S)^r.\] 
Since $S$ is an arbitrary $\cG_t$-optional random variable, on choosing $S=\epsilon Y_t$, we get finally
\[(\BbE(Y^r_{t+\epsilon Y_t}|\cG_t))^{1/r}\leq  Y_t+c_3\epsilon Y_t.\]
Substituting yields, that   for any $\alpha\in]0,1[$,  parameters 
$\epsilon>0$ and $\delta>0$ can be chosen so that the following inequality holds: 
\[\BbP(U>a|\cG_t)\geq \left[\frac{(1-c_1\epsilon) -\frac{a}{Y_t})_+}{1-c_3\epsilon}\right]^{\frac{r}{r-1}}\geq (1-c_1\epsilon -\frac{1}{1+\delta})_+^{\frac{r}{r-1}} (1+c_3\epsilon)^{-\frac{r}{r-1}} \geq 1-\alpha.\]
\eproof

\begin{lemm}
\label{lem:y-ltbz}
Let $(\Omega, \cG, (\cG_t), \BbP)$ be a filtered probability space,
$(Y_t)$ and $(Z_t)$ two $\BbR_+$-valued, $(\cG_t)$-adapted processes on it.
For $a>0$, we denote $S_a=\inf\{t\geq 0: Y_t\leq a\}$
and $T_a=\inf\{t\geq 0: Z_t\leq a\}$. Suppose that there exist positive constants
$a,b,p,K_1$ such that
\begin{enumerate}
\item $Y_t\leq b Z_t$ almost surely, for all $t$, and
\item
$\BbE(Z_{t\wedge T_a}^p)\leq K_1$.
\end{enumerate}
Then, there exists $K_2>0$ such that
$\BbE(Y_{t\wedge S_{ab}}^p)\leq K_2$.
\end{lemm}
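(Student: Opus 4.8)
The plan is to exploit a purely pathwise ordering between the two stopping times and then to dominate $Y_{t\wedge S_{ab}}$ by $b\,Z_{t\wedge T_a}$ on one event and by the constant $ab$ on the complementary event, so that the bound on the $p$-th moment of $Z_{t\wedge T_a}$ transfers to $Y_{t\wedge S_{ab}}$.

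First I would establish that $S_{ab}\leq T_a$ almost surely. Since $Y_t\leq bZ_t$ for every $t$, the inclusion $\{Z_t\leq a\}\subseteq\{Y_t\leq ab\}$ holds for all $t$. Using right-continuity of the sample paths (in the applications $Y_t=f(\xi_t)$ and the analogous $Z_t$ are c\`adl\`ag, so the first entry into a closed sublevel set attains a value in that set), we have $Z_{T_a}\leq a$ on $\{T_a<\infty\}$, whence $Y_{T_a}\leq bZ_{T_a}\leq ab$; thus $T_a\in\{t:Y_t\leq ab\}$ and therefore $S_{ab}\leq T_a$ (the inequality being trivial on $\{T_a=\infty\}$).

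Next I would split on $\{S_{ab}\geq t\}$ and $\{S_{ab}<t\}$. On $\{S_{ab}\geq t\}$ one has $t\leq S_{ab}\leq T_a$, so $t\wedge S_{ab}=t=t\wedge T_a$, and hypothesis 1 gives $Y_{t\wedge S_{ab}}=Y_t\leq bZ_t=bZ_{t\wedge T_a}$, i.e.\ $Y_{t\wedge S_{ab}}^p\leq b^pZ_{t\wedge T_a}^p$. On $\{S_{ab}<t\}$ one has $t\wedge S_{ab}=S_{ab}$, and the same right-continuity argument yields $Y_{S_{ab}}\leq ab$, hence $Y_{t\wedge S_{ab}}^p\leq (ab)^p$, a deterministic constant. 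Combining the two events produces the pathwise majorisation $Y_{t\wedge S_{ab}}^p\leq (ab)^p+b^pZ_{t\wedge T_a}^p$, valid on all of $\Omega$ because $Z\geq 0$.

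Finally I would take expectations and invoke hypothesis 2, obtaining $\BbE(Y_{t\wedge S_{ab}}^p)\leq (ab)^p+b^p\,\BbE(Z_{t\wedge T_a}^p)\leq (ab)^p+b^pK_1$, so the assertion holds with $K_2=(ab)^p+b^pK_1$, uniformly in $t$. It is worth stressing that no martingale, supermartingale, or optional-stopping property of $(Z_{t\wedge T_a}^p)$ is required: everything reduces to the stopping-time ordering $S_{ab}\leq T_a$ together with the elementary case distinction. Consequently the only delicate point, which I expect to be the sole (and minor) obstacle, is the attainment of the threshold values at the hitting times, namely $Z_{T_a}\leq a$ and $Y_{S_{ab}}\leq ab$; this is precisely where right-continuity of the paths is used, and it is automatic for the c\`adl\`ag processes to which the lemma is applied.
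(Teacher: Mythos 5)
Your proof is correct and follows essentially the same route as the paper's: establish $S_{ab}\leq T_a$ from the pathwise domination $Y_t\leq bZ_t$, split on $\{S_{ab}\geq t\}$ versus $\{S_{ab}<t\}$, and bound the two pieces by $b^p\BbE(Z^p_{t\wedge T_a})$ and $(ab)^p$ respectively, yielding the same constant $K_2=b^pK_1+(ab)^p$. Your extra care about right-continuity at the hitting times is a welcome clarification of a point the paper leaves implicit.
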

\begin{proof}
For arbitrary $s>0$, the condition $Z_s<a$ implies $X_s\leq bZ_s<ab$ almost surely. Hence, $\{S_{ab}\geq t\} \subseteq \{T_a\geq t\}$. Then,
\begin{eqnarray*}
\BbE(Y^p_{t\wedge S_{ab}})&\leq & \BbE\left[Y^p_{t\wedge S_{ab}}(\id_{\{S_{ab}\geq t\}}+\id_{\{S_{ab}> t\}})\right]\\
&\leq& \BbE(Y_t^p \id_{\{S_{ab}\geq t\}}) +(ab)^p\leq b^p K_1+(ab)^p:=K_2.
\end{eqnarray*}
\end{proof}

\begin{prop}
\label{pro:non-existence-moments}
Let $(\Omega, \cG, (\cG_t), \BbP)$ be a filtered probability space,
$(Y_t)$ and $(Z_t)$ two $\BbR_+$-valued, $(\cG_t)$-adapted processes on it.
For $a>0$, we denote $S_a=\inf\{t\geq 0: Y_t\leq a\}$
and $T_a=\inf\{t\geq 0: Z_t\leq a\}$. 
Suppose that 
\begin{enumerate}
\item there exist positive constants
$a,c_1, c_2, r$ such that
\begin{itemize}
\item $\BbE(dZ_t^2|\cG_{t-})\geq -c_1dt$ on the event $\{T_a\geq t\}$,
\item
$\BbE(dZ_t^{r}|\cG_{t-})\leq c_2 Z_{t-}^{r-1}dt$ on the event $\{T_a\geq t\}$,
\item $Z_0=z>a$.
\end{itemize}
\item $Y_0=y$ and there exists a constant $b>0$ such that
\begin{itemize}
\item $ab< y <bz$ and
\item $Y_t\leq b Z_t$ almost surely for all $t$.
\end{itemize}
\end{enumerate}
If for some $p>0$, the process $(Y^{p}_{t\wedge S_{ab}})$ is a submartingale, then $\BbE(T_a^q)=\infty$ for all $q>p$.
\end{prop}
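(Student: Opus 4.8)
The engine of the argument is Proposition \ref{pro:gener-foster}, which I would apply to the process $(Z_t^2)_t$ (whose hitting time of level $a^2$ is exactly $T_a$, since $Z\ge0$); its two hypotheses are supplied, after elementary manipulation, by the two displayed moment conditions on $Z^2$ and on $Z^r$. It delivers a lower bound on the residual passage time: for every $\alpha\in]0,1[$ there are $\epsilon>0$ and $\delta>0$ with
\[\BbP(T_a>t+\epsilon Z_{t\wedge T_a}^2\mid\cG_t)\geq 1-\alpha \quad\textrm{on } \{T_a>t;\ Z_t^2>a^2(1+\delta)\}.\]
The heuristic is then transparent: the hypothesis $Y_t\le bZ_t$ forces $Z_t\ge Y_t/b$, the $p$-submartingale prevents $Y_t$ (hence $Z_t$) from decaying, and a persistently large $Z_t$ imposes, through the displayed inequality, a long residual time — producing a tail of $T_a$ heavy enough to kill the $q$-th moment once $q>p$.

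I would argue by contradiction. Suppose $\BbE(T_a^q)<\infty$ for some $q>p$ and set $N_t:=Y^p_{t\wedge S_{ab}}$. The submartingale property gives $\BbE(N_t)\ge N_0=y^p$ for all $t$. On the other hand $\BbE(T_a^q)<\infty$ forces $T_a<\infty$, hence $S_{ab}\le T_a<\infty$ almost surely; since $S_{ab}$ is the first time $Y\le ab$ and $Y$ is c\`adl\`ag, $Y_{S_{ab}}\le ab$, so $N_t\to Y^p_{S_{ab}}\le (ab)^p$ almost surely. If $\{N_t\}_t$ were uniformly integrable one would conclude $y^p\le\lim_t\BbE(N_t)=\BbE(Y^p_{S_{ab}})\le (ab)^p$, impossible because $y>ab$. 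The whole matter thus reduces to establishing uniform integrability of $\{N_t\}_t$.

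For uniform integrability it suffices to bound a slightly higher moment, i.e.\ to exhibit $p'\in]p,q[$ with $\sup_t\BbE(Y^{p'}_{t\wedge S_{ab}})<\infty$. By Lemma \ref{lem:y-ltbz} applied with exponent $p'$ (using $Y\le bZ$ and $\{S_{ab}\ge t\}\subseteq\{T_a\ge t\}$) this follows from $\sup_t\BbE(Z^{p'}_{t\wedge T_a})<\infty$. I would split $\{T_a>t\}$ according to whether the residual-time inequality holds. On its \emph{good} part (conditional probability $\ge 1-\alpha$) one has $Z_t^2\le (T_a-t)/\epsilon\le T_a/\epsilon$, so $Z_t^{p'}\le (T_a/\epsilon)^{p'/2}$ and the contribution is dominated by $\epsilon^{-p'/2}\BbE(T_a^{p'/2})<\infty$, uniformly in $t$. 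The complementary \emph{bad} part, of small conditional probability, would be controlled using the second hypothesis: integrating $\BbE(dZ^r_t\mid\cG_{t-})\le c_2 Z_{t-}^{r-1}dt$ by a Gronwall/Bihari estimate (as in the proof of Proposition \ref{pro:gener-foster}) bounds $\BbE(Z^r_{t\wedge T_a})$, which combined with the tail bound $\BbP(T_a>t)\le t^{-q}\BbE(T_a^q)$ and H\"older's inequality controls $\BbE(Z_t^{p'}\id_{\mathrm{bad}})$.

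The main obstacle is precisely this control of the \emph{bad} event. The residual-time inequality holds only with conditional probability $1-\alpha$, and on its complement $Z_t$ may be atypically large; converting the high-probability estimate into a genuine uniform $L^{p'}$-bound forces a delicate balancing of the $r$-th moment growth of $Z$, the tail decay of $T_a$, and the free parameters $\alpha$ and $p'$ — and, for $q$ close to $p$, a bootstrap over the exponent $p'$ rather than a single H\"older step. Everything else (the submartingale expectation inequality, the almost-sure limit $Y_{S_{ab}}\le ab$, and the transfer Lemma \ref{lem:y-ltbz}) is routine.
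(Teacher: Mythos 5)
Your overall architecture --- argue by contradiction, invoke Proposition \ref{pro:gener-foster} for a residual-time estimate, transfer a uniform moment bound from $Z$ to $Y$ via Lemma \ref{lem:y-ltbz}, and then play the submartingale inequality $\BbE(Y^{p}_{t\wedge S_{ab}})\geq y^{p}$ against the almost-sure limit $Y_{S_{ab}}\leq ab$ --- is the paper's, and your insistence on uniform integrability of $(Y^{p}_{t\wedge S_{ab}})_t$, to be obtained from a uniformly bounded moment of some order $p'\in]p,q[$, is a legitimate (indeed slightly more careful) rendering of the paper's final step. The gap sits exactly where you flag the ``main obstacle'': the uniform bound $\sup_t\BbE(Z^{p'}_{t\wedge T_a})<\infty$. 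You attempt to prove it by splitting $\{T_a>t\}$ into a good and a bad part and upper-bounding $Z$ separately on each; the bad part then genuinely resists (your H\"older/Gronwall combination degenerates for $q$ close to $p$, whence the appeal to an unspecified bootstrap over $p'$). This difficulty is an artefact of running the estimate in the wrong direction, and as written the argument is not complete.

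The paper's proof shows there is no bad event to control: one uses the residual-time estimate to bound $\BbE(T_a^q)$ \emph{from below} by the $q$-th moment of $Z_{t\wedge T_a}$. Taking $\alpha=1/2$ in Proposition \ref{pro:gener-foster}, on the event $\{Z_{t\wedge T_a}>a(1+\delta)\}$ one has
\[
\BbE(T_a^q\mid\cG_t)\;\geq\;\bigl(t+\epsilon Z_{t\wedge T_a}\bigr)^q\,\BbP\bigl(T_a>t+\epsilon Z_{t\wedge T_a}\mid\cG_t\bigr)\;\geq\;\tfrac12\,\epsilon^q\,Z^q_{t\wedge T_a},
\]
whence
\[
\BbE(T_a^q)\;\geq\;\BbE\bigl(T_a^q\,\id_{\{Z_{t\wedge T_a}>a(1+\delta)\}}\bigr)\;\geq\;\tfrac{\epsilon^q}{2}\,\BbE\bigl(Z^q_{t\wedge T_a}\bigr)-\tfrac{\epsilon^q}{2}\,a^q(1+\delta)^q .
\]
The complement of the good event contributes nonnegatively to the left-hand side and is simply discarded, so the assumed finiteness of $\BbE(T_a^q)$ yields $\sup_t\BbE(Z^{q}_{t\wedge T_a})\leq K_1$ in one line --- no Gronwall estimate, no tail bound on $T_a$, no H\"older step, no bootstrap. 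Lemma \ref{lem:y-ltbz} then gives $\sup_t\BbE(Y^{q}_{t\wedge S_{ab}})\leq K_2$, which is precisely the higher-moment bound you wanted in order to get uniform integrability of $(Y^{p'}_{t\wedge S_{ab}})_t$ for any $p'\in]p,q[$, and your concluding contradiction $y^{p'}\leq (ab)^{p'}$ goes through. (Two minor discrepancies that do not affect the verdict: the paper applies Proposition \ref{pro:gener-foster} to $Z$ itself rather than to $Z^2$, and it works with the exponent $q$ throughout rather than an intermediate $p'$.)
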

\begin{proof}
We can without loss of generality examine solely the case 
$\BbP(S_{ab}<\infty)=1$, since otherwise $\BbE(T_a^q)=\infty$ holds trivially for all $q>0$.
Assume further that for some $q>p$, it happens $\BbE(T_a^q)<\infty$.
Hypothesis 1 allows applying proposition \ref{pro:gener-foster};  for $\alpha=1/2$, we can thus determine positive constants $\epsilon$ and $\delta$ such that 
$\BbP(T_a>t+\epsilon Z_{t\wedge T_a}|\cG_t) \geq 1/2$ on the event 
$\{Z_{t\wedge T_a}>a(1+\delta)\}$. Hence
\begin{eqnarray*}
\BbE(T_a^q) &\geq & \BbE(T_a^q \id_{\{Z_{t\wedge T_a} >a (1+\delta)\}})\\
&\geq & \frac{1}{2} \BbE\left[ (t+Z_{t\wedge T_a})^q  \id_{\{Z_{t\wedge T_a} >a (1+\delta)\}}\right]\\
&\geq & \frac{\epsilon^q}{2} \BbE(Z^{q}_{t\wedge T_a})- 
 \frac{\epsilon^q}{2} a^{q} (1+\delta)^{q}.
\end{eqnarray*}
Now, finiteness of the $q$ moment of $T_a$ implies the existence
of a constant $K_1>0$ such that $\BbE(Z^{q}_{t\wedge T_a})\leq K_1$.
From the previous lemma \ref{lem:y-ltbz} follows that there exists some $K_2>0$ such that $\BbE(Y^{q}_{t\wedge S_{ab}})\leq K_2$. 
Since the time $S_{ab}$ is assumed almost surely finite,
$\lim_{t\rightarrow\infty}  \BbE(Y^{q}_{t\wedge S_{ab}})=
\BbE(Y^{q}_{S_{ab}})\leq (ab)^{q}$. On the other hand, 
$(Y^{p}_{t\wedge S_{ab}})$ is a submartingale, so is thus a fortiori
$(Y^{q}_{t\wedge S_{ab}})$. Hence, $\BbE(Y^{q}_{t\wedge S_{ab}})\geq
\BbE(Y^{q}_0)=y^{q}$, leading to a contradiction if we chose $y>ab$.
\end{proof}

\begin{coro}
\label{cor:non-existence}
Let $(\Omega, \cG, (\cG_t), \BbP)$ be a filtered probability space,
$(X_t)$ a $\BbR_+$-valued, $(\cG_t)$-adapted processes on it.
For $a>0$, we denote  $T_a=\inf\{t\geq 0: X_t\leq a\}$. 
Suppose that there exist positive constants
$a,c_1, c_2, p, r$ such that
\begin{itemize}
\item $X_0=x>a$,
\item $\BbE(dX_t|\cG_{t-})\geq -c_1dt$ on the event $\{T_a\geq t\}$,
\item
$\BbE(dX_t^{r}|\cG_{t-})\leq c_2 X_{t-}^{r-1}dt$ on the event $\{T_a\geq t\}$,
\item
$(X^p_{t\wedge T_a})$ is a submartingale.
\end{itemize}
Then $\BbE(T_a^q)=\infty$ for all $q>p$.
\end{coro}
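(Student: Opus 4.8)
The plan is to read Corollary~\ref{cor:non-existence} as the single-process specialisation of Proposition~\ref{pro:non-existence-moments}, taking $Y_t=Z_t=X_t$ and $b=1$. Under this choice $S_{ab}=S_a=T_a=\inf\{t\ge0:X_t\le a\}$, the domination $Y_t\le bZ_t$ holds with equality so that Lemma~\ref{lem:y-ltbz} is vacuous, and the whole argument collapses to the endgame of that proposition's proof applied directly to $X$. The one point worth isolating is the lower-bound hypothesis: Proposition~\ref{pro:non-existence-moments} is stated with a bound on $\BbE(dZ_t^2|\cG_{t-})$, a form tailored to its two-process use, whereas here we are handed the first-moment bound $\BbE(dX_t|\cG_{t-})\ge-c_1\,dt$. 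This is exactly the hypothesis Proposition~\ref{pro:gener-foster} asks for, so I would feed the corollary's two drift conditions straight into \ref{pro:gener-foster} applied to $X$, with no passage through $X^2$.

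First I would discard the trivial case: if $\BbP(T_a<\infty)<1$ then $\BbE(T_a^q)=\infty$ for every $q>0$ and there is nothing to prove, so assume $T_a<\infty$ almost surely and argue by contradiction. Suppose $\BbE(T_a^{q_0})<\infty$ for some $q_0>p$; then $\BbE(T_a^q)<\infty$ for every $q\in(p,q_0]$ as well. Applying Proposition~\ref{pro:gener-foster} to $X$ with $\alpha=\tfrac12$ yields $\epsilon>0$ and $\delta>0$ with
\[\BbP\bigl(T_a>t+\epsilon X_{t\wedge T_a}\mid\cG_t\bigr)\ge\tfrac12\qquad\text{on }\{X_{t\wedge T_a}>a(1+\delta)\}.\]
On that event $T_a^q\ge\epsilon^q X_{t\wedge T_a}^q$ holds with conditional probability at least $\tfrac12$, and splitting $\BbE(X_{t\wedge T_a}^q)$ over this event and its complement gives the uniform bound $\BbE(X_{t\wedge T_a}^q)\le\frac{2}{\epsilon^q}\BbE(T_a^q)+a^q(1+\delta)^q<\infty$, valid for each $q\in(p,q_0]$ and all $t$.

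To close the argument I would fix $q\in(p,q_0)$ and combine three facts. Since $w\mapsto w^{q/p}$ is convex and increasing for $q>p>0$, conditional Jensen promotes the submartingale $(X^p_{t\wedge T_a})$ to the submartingale $(X^q_{t\wedge T_a})$, whence $\BbE(X_{t\wedge T_a}^q)\ge\BbE(X_0^q)=x^q$ for every $t$. The uniform bound at exponent $q_0$ makes the family $\{X_{t\wedge T_a}^q\}_t$ bounded in $L^{q_0/q}$ with $q_0/q>1$, hence uniformly integrable; as $T_a<\infty$ almost surely we have $X_{t\wedge T_a}^q\to X_{T_a}^q$ almost surely, so $\BbE(X_{t\wedge T_a}^q)\to\BbE(X_{T_a}^q)$ and therefore $\BbE(X_{T_a}^q)\ge x^q$. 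Finally, right-continuity of $X$ at the hitting time of the closed set $[0,a]$ gives $X_{T_a}\le a$, so $\BbE(X_{T_a}^q)\le a^q$. Together these force $x^q\le a^q$, contradicting $x>a$; hence $\BbE(T_a^q)=\infty$ for all $q>p$.

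I expect the only genuine obstacle to be the interchange of limit and expectation at the stopping time: a uniform $L^1$ bound on $X_{t\wedge T_a}^q$ alone does not justify $\BbE(X_{t\wedge T_a}^q)\to\BbE(X_{T_a}^q)$, which is why I would work at an exponent $q$ strictly below the assumed-finite exponent $q_0$, securing uniform integrability from the $L^{q_0/q}$ bound. The remaining care is bookkeeping: matching the corollary's first-moment lower bound to Proposition~\ref{pro:gener-foster} rather than misreading it against the $\BbE(dZ_t^2)$ phrasing of Proposition~\ref{pro:non-existence-moments}.
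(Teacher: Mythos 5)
Your proposal is correct and follows essentially the same route as the paper, which states Corollary~\ref{cor:non-existence} without separate proof as the single-process specialisation ($Y=Z=X$, $b=1$) of Proposition~\ref{pro:non-existence-moments}, whose endgame you reproduce: the trivial case $\BbP(T_a<\infty)<1$, the contradiction via Proposition~\ref{pro:gener-foster} with $\alpha=\tfrac12$, the resulting uniform bound on $\BbE(X^q_{t\wedge T_a})$, and the clash between the submartingale lower bound $x^q$ and the hitting-time upper bound $a^q$. Your uniform-integrability argument (working at $q<q_0$ to secure an $L^{q_0/q}$ bound) is a welcome patch for the limit interchange $\BbE(X^q_{t\wedge T_a})\to\BbE(X^q_{T_a})$, which the paper's proof of the proposition asserts without justification.
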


After all this preparatory work, the proof of the theorem  \ref{thm:moments}
is now immediate. 

  \vskip3mm
\noindent\textit{Proof of the theorem \ref{thm:moments}.} Write $X_t=f(\xi_t)$ and use the abbreviation $A:=\sublev{f}{a}$.
Notice  moreover that $\tau_{A}=T_a$. 
\begin{enumerate}
\item
Since $f\rightarrow \infty$ the set $A$ is finite.
The integrability of the passage time follows from corollaries
\ref{cor:pgetwo} and \ref{cor:pletwo}.
\item On identifying  $Z_t$ and $Y_t$ in proposition \ref{pro:non-existence-moments} with  $g(\xi_t)$ and $f(\xi_t)$ respectively, we see that the conditions of the theorem imply the hypotheses of the proposition. The non-existence of moments immediately follows.
\end{enumerate}
\eproof

%%%%%%%%%%%%%%%%

\subsection{Proof of  theorems on explosion and implosion}
\label{ssec:explo}
As stated in the introduction, the result concerning integrability of explosion time is reminiscent to Foster's criterion for positive recurrence! The reason lies in the  lemma \ref{lem:expec-time}  and
the fact that theorem \ref{thm:explo}, establishing explosion, is equivalent to the proposition \ref{pro:foster} below.

Since we wish to treat explosion, we work with a Markov chain evolving on the augmented state space
$\hat{\BbX}=\BbX\cup\{\partial\}$ and with augmented functions $\hat{f}$ as explained in the introduction. Note that if $f>0$ then $\hat{f}\rt{\BbX}=f>0$ while $\hat{f}(\partial)=0$. We extend also naturally $X_t=f(\xi_t)$ (and \textit{stricto sensu} also the
generator $\Gamma$). We use the ``hat'' notation  $\hat{\cdot}$ to denote quantities referring to the extended chain. 

\begin{prop}
\label{pro:foster}
Let $\hat{f}:\hat{\BbX}\rightarrow [0,\infty[$ belong to $\Dom_+(\hat{\Gamma})$
and verify $\hat{f}(x)>0$ for all $x\in\BbX$ (it vanishes only at $\partial$).
We denote $f$ the restriction of $\hat{f}$ on $\BbX$.
The following are equivalent:
\begin{enumerate}
\item There exists $\epsilon >0$ such that 
$\Gamma f(x) \leq -\epsilon$ for all $x\in\BbX$.
\item $\hat{\BbE}_x(\hat{\tau}_\partial)<\infty$ for all $x\in\BbX$.
\end{enumerate}
\end{prop}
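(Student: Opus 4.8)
The plan is to reduce both implications to the single-function Lemma \ref{lem:expec-time} together with a first-step (regeneration) analysis of the embedded chain. The crucial observation is that, since $\hat{f}>0$ on $\BbX$ and $\hat{f}(\partial)=0$, the process $X_t=\hat{f}(\xi_t)$ on the augmented space satisfies $\hat{\tau}_\partial=\inf\{t\geq 0:X_t=0\}=\zeta$; that is, hitting the cemetery is exactly explosion. This identification is what lets the integrability of $\hat{\tau}_\partial$ be read off a drift estimate for $X_t$.

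For the implication $[1\Rightarrow 2]$ I would take the $f$ provided by hypothesis, set $X_t=\hat{f}(\xi_t)$, and work with the stopped process $X_{t\wedge\hat{\tau}_\partial}$. On the event $\{\hat{\tau}_\partial\geq t\}$ one has $\xi_{t-}\in\BbX$, so the semimartingale decomposition recalled in \S\ref{sec:proof} gives $\BbE(dX_t\mid\cF_{t-})=\Gamma f(\xi_{t-})\,dt\leq -\epsilon\,dt$. This is precisely the drift hypothesis of Lemma \ref{lem:expec-time} with $c=0$ and $Y_t=X_t$, whence $\hat{\BbE}_x(\hat{\tau}_\partial)\leq f(x)/\epsilon<\infty$ for every $x\in\BbX$, which is statement 2.

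For $[2\Rightarrow 1]$ the point is that statement 2 lets me \emph{construct} the Lyapunov function: I would set $\hat{f}(x):=\hat{\BbE}_x(\hat{\tau}_\partial)$ for $x\in\BbX$ and $\hat{f}(\partial):=0$. Since $\hat{\tau}_\partial=\zeta\geq\sigma_1$ and $\gamma_x<\infty$, we have $\hat{f}(x)\geq\BbE_x\sigma_1=1/\gamma_x>0$, so $\hat{f}$ is strictly positive on $\BbX$. Decomposing at the first jump time $J_1=\sigma_1$ and using the strong Markov property yields $\hat{f}(x)=\frac{1}{\gamma_x}+\sum_y P_{xy}\hat{f}(y)$. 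Hence $\sum_y P_{xy}\hat{f}(y)=\hat{f}(x)-1/\gamma_x<\infty$, which simultaneously shows $\hat{f}\in\Dom_+(\hat{\Gamma})$ and gives $m_f(x)=-1/\gamma_x$, so that $\Gamma f(x)=\gamma_x m_f(x)=-1$ for every $x$; statement 1 then holds with $\epsilon=1$.

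The delicate points I expect are structural rather than computational. First, one must legitimately run the semimartingale calculus on the augmented chain up to the absorption time: the identity $\BbE(dX_t\mid\cF_{t-})=\Gamma f(\xi_{t-})\,dt$ is needed only on $\{\hat{\tau}_\partial\geq t\}$, where $\xi_{t-}\in\BbX$ and no explosion has yet occurred, so the decomposition of \S\ref{sec:proof} applies; I would check that adjoining $\partial$ leaves the compensator unchanged before time $\zeta$ precisely because $\hat{f}(\partial)=0$. Second, in $[2\Rightarrow 1]$ the first-step decomposition and the interchange of expectation with the regeneration at $J_1$ both rest on $\hat{\tau}_\partial<\infty$ almost surely, which is guaranteed by $\hat{\BbE}_x(\hat{\tau}_\partial)<\infty$. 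Finally, I note that statement 1 is really an existential assertion, matching Theorem \ref{thm:explo}: in $[2\Rightarrow 1]$ we exhibit one admissible $f$, and no claim is made that an arbitrary positive $f$ has negative drift.
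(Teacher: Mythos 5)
Your proof is correct and follows essentially the same route as the paper: the direction $[1\Rightarrow 2]$ is Lemma \ref{lem:expec-time} applied to $X_t=\hat f(\xi_t)$ with the identification $\hat\tau_\partial=\inf\{t:X_t=0\}$, and $[2\Rightarrow 1]$ constructs the Lyapunov function $\hat f(x)=\hat{\BbE}_x(\hat\tau_\partial)$ and computes $m_f(x)=-1/\gamma_x$ by a first-step decomposition, exactly as in the paper (which merely multiplies by $\epsilon$ instead of taking $\epsilon=1$). Your added remarks on membership in $\Dom_+(\hat\Gamma)$ and on the compensator up to $\zeta$ are sound and slightly more careful than the original.
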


\begin{proof}
Let $Y_t=\hat{f}(\hat{\xi}_t)$, for $t\geq 0$. Then $(Y_t)$ is a 
$[0,\infty[$-valued, $(\hat{\cF}_t)$-adapted process. Note also
that $\hat{\tau}_\partial=T_0:=\inf\{t\geq 0: Y_t=0\}$
\begin{description}
\item{[$1\Rightarrow 2$:]} 
The condition $\Gamma f(x) \leq -\epsilon$ for all $x\in\BbX$
is equivalent to the condition $\BbE(dY_{t\wedge T_0}|\cG_{t-})
\leq -\epsilon \id _{T_0\geq t} dt$. The conclusion follows from direct
application of the lemma \ref{lem:expec-time}.
\item{[$2\Rightarrow 1$:]} 
Define $\hat{f}(x)=\epsilon\BbE_x (\hat{\tau}_\partial)$. Obviously
$\hat{f}(\partial)=0$ while $0<\hat{f}(x)<\infty$ for all $x\in\BbX$.
Conditioning on the first move of the embedded Markov chain $\tilde{\xi}$ we get:
\begin{eqnarray*}
m_f(x)&=&\BbE(f(\tilde{\xi}_{1})-f(x)|\tilde{\xi}_0=x)=\BbE(f(\xi_{J_1})-f(\xi_0)|\xi_0=x)\\
&=&\epsilon \sum_{y\in\BbX\setminus\{x\}} P_{xy} \BbE_y(\hat{\tau}_\partial)-\epsilon\BbE_x(\hat{\tau}_\partial)
=\epsilon  (\BbE_x(\hat{\tau}_\partial)-\BbE_x(\sigma_1))-\epsilon\BbE_x(\hat{\tau}_\partial)\\
&=&-\epsilon \BbE_x(\sigma_1)
=-\frac{\epsilon}{\gamma_x}.
\end{eqnarray*}
Hence $\Gamma f(x) \leq -\epsilon$ for all $x\in\BbX$. \eproof
\end{description}

\vskip3mm
\noindent\textit{Proof of the theorem \ref{thm:explo}.}
Evident by proposition \ref{pro:foster}, since $\zeta=\hat{\tau}_\partial$.
\end{proof}

\vskip3mm
\noindent\textit{Proof of the proposition \ref{pro:explo}.}
Let  $G(z)=\int_0^z \frac{dy}{g(y)}$. Then $G$ is differentiable, with $G'(z)=\frac{1}{g(z)}>0$ hence an increasing function of $z\in[0,b]$. Since $g$ is increasing, $G'$ is decreasing and hence $G$ is concave satisfying $\lim_{z\rightarrow 0}G(z)=0$ and  $\lim_{z\rightarrow \infty} G(z)<\infty$. Additionally, boundedness of $G$ imply that $G\circ f\in \ell^1_+(\Gamma)$.
Due to differentiability and concavity of $G$, we have:
\begin{eqnarray*}
\Gamma G\circ f(x)&= &\gamma_x\BbE\left[G(f(\tilde{\xi}_n)+\Delta_{n+1}^f)-G(f(\tilde{\xi}_n))\vert\tilde{\xi}_n=x\right]\\
&\leq& 
\cut{
\gamma_x G'(f(x))\BbE\left[\Delta_{n+1}^f\vert\tilde{\xi}_n=x\right]\\
&=&}
 \gamma_x \frac{1}{g(f(x))} m_f(x)=\frac{\Gamma f(x)}{g(f(x))}  \leq -c;
\end{eqnarray*}
we conclude by theorem \ref{thm:explo} because $G\circ f$ is strictly positive and bounded.
\eproof

\vskip3mm
\noindent\textit{Proof of the theorem \ref{thm:cond-explo}.}
The conditions of the theorem imply transience of the chain. In fact, let $Y_t=f(\xi_{t\wedge\tau_A})$. The condition $\Gamma f(x)\leq -\epsilon$ on $A^c$ implies that $(Y_t)$ is  a non-negative supermartingale, converging almost surely towards $Y_\infty$. For every $x\in A^c$, Fatou's lemma implies
$\BbE_x(Y_\infty)\leq \lim_{t\rightarrow \infty} \BbE_x(Y_t)\leq f(x)$. Suppose now that the chain is not transient, hence $\BbP_x(\tau_A<\infty)=1$. In that case
$\lim_{t\rightarrow \infty}Y_t =Y_{\tau_A}$ almost surely. We get then by monotone convergence theorem $\inf_{y\in A} f(y) \leq \BbE_x(f(\xi_{\tau_A}))\leq f(x)$, in contradiction with the hypothesis that there exists $x\in A^c$ such that $f(x)<\inf_{y\in A} f(y)$. Hence, $\BbP_x(\tau_A=\infty)>0$.

Let now $T:=\tau_A\wedge \zeta$;  on defining $Z_t=\hat{f}(\xi_{t\wedge T})$, we obtain  then that $\BbE(dZ_t|\cF_{t-}) \leq -\epsilon \id_{\{T>t\}} dt$. We conclude by proposition \ref{pro:foster} and the fact that the event $\{\tau_A=\infty\}$ is not negligible. 
\eproof

\vskip3mm
\noindent\textit{Proof of the theorem \ref{thm:non-explo}.}
Let $G(z)=\int_0^z \frac{1}{g(y)} dy$; this function is differentiable with $G'(z)=\frac{1}{g(z)}>0$, hence increasing. Since $g$ is increasing, $G'$ is decreasing, hence $G$ is concave. Non integrability of the infinite tail means that  $G$  is unboundedly increasing towards $\infty$ as $z\rightarrow \infty$, 
hence $G\circ f \rightarrow \infty$. 
Concavity and differentiability of $G$ imply that $G(f(x)+\Delta)-G(f(x))\leq \Delta G'(f(x))$;   integrability of $f$ guarantees then $0\leq \BbE(G(f(\tilde{\xi}_n)+\Delta_{n+1}^f)|\tilde{\xi}_n=x)\leq G(f(x))+ \frac{m_f(x)}{g(f(x))}<\infty$ so that 
 $F:=G\circ f\in\ell^1_+(\Gamma)$ as well. Now
\begin{eqnarray*}
\Gamma G\circ f(x)&= &\gamma_x\BbE\left[G(f(\tilde{\xi}_n)+\Delta_{n+1}^f)-G(f(\tilde{\xi}_n))\vert\tilde{\xi}_n=x\right]\\
&=&\gamma_x\BbE\left[\int_{f(x)}^{f(x)+\Delta_{n+1}^f} \frac{1}{g(y)} dy|\tilde{\xi}_n=x\right]\\
&\leq& 
\cut{
\gamma_x\BbE\left[\frac{1}{g(f(x))} \Delta_{n+1}^f \id_{ \Delta_{n+1}^f \geq 0}-
\frac{1}{g(f(x))} |\Delta_{n+1}^f| \id_{ \Delta_{n+1}^f < 0}|\tilde{\xi}_n=x\right]\\
&=&
}
\gamma_x\frac{1}{g(f(x))} m_f(x) \leq c.
\end{eqnarray*}
Let $X_t=F(\xi_t)$ be the process obtained form the Markov chain after transformation by $F$. Using the semimartingale decomposition of $X_t=X_0+M_t+\int_{]0,t]} \Gamma F(\xi_{s-}) ds$, it becomes obvious then that $\BbE_x X_t\leq 
F(x) +ct$, showing that for every finite $t$, $\BbP_x(X_t=\infty)=0$. But since $F\rightarrow \infty$ the process itself $\xi_t$ cannot explode.

\eproof

%\subsection{Implosion}
%\label{ssec:implo}

\begin{prop}
\label{pro:implosion-irreducibility}
Suppose the chain is recurrent and there exists a finite set $A\in\cX$ and a constant $C>0$ such that, for all $x\in\BbX$, the uniform bound  $\BbE_x\tau_A\leq C$ holds. Then the chain implodes towards any state $z\in\BbX$.
\end{prop}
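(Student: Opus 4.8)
The plan is to reduce everything to the uniform hypothesis $\BbE_x\tau_A\leq C$ together with a renewal argument on the finite set $A$. First I would observe that since $\tau_{A\cup\{z\}}\leq \tau_A$, the hypothesis immediately gives $\BbE_x\tau_{A\cup\{z\}}\leq C$ for every $x\notin A\cup\{z\}$; as $A\cup\{z\}$ is again finite, I may replace $A$ by $A\cup\{z\}$ and assume without loss of generality that $z\in A$. It is then convenient to pass to the embedded chain: conditioning on $\tilde\xi$ and using $\BbE(\sigma_k|\tilde\xi)=1/\gamma_{\tilde\xi_{k-1}}$ yields, for any $B\subset\BbX$, the identity $\BbE_x\tau_B=\BbE_x\sum_{k=0}^{\tilde\tau_B-1}\frac{1}{\gamma_{\tilde\xi_k}}$, where $\tilde\tau_B=\inf\{n\geq0:\tilde\xi_n\in B\}$, so that the continuous passage time becomes a weighted occupation time of the jump chain.

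Next I would control the cost of a single excursion away from $A$. For $w\in A$ let $\tilde R=\inf\{n\geq1:\tilde\xi_n\in A\}$ denote the first return of the embedded chain to $A$. Splitting on the first jump and applying the Markov property at time $1$, the expected weighted cost of the excursion satisfies
\[\BbE_w\sum_{k=0}^{\tilde R-1}\frac{1}{\gamma_{\tilde\xi_k}}\leq \frac{1}{\gamma_w}+\sum_{y\notin A}P_{wy}\,\BbE_y\tau_A\leq \max_{a\in A}\frac{1}{\gamma_a}+C=:C_1<\infty,\]
the decisive step being the use of $\BbE_y\tau_A\leq C$ for the point $y$ reached just after leaving $A$, while finiteness of $A$ (hence $\min_{a\in A}\gamma_a>0$) takes care of the first term. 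Recurrence guarantees $\tilde R<\infty$ almost surely, so the positions of the chain at its successive returns to $A$ form a genuine Markov chain $(w_j)_{j\geq0}$ on the finite set $A$; it inherits irreducibility from the embedded chain and is therefore positive recurrent.

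With these two ingredients I would conclude as follows. Let $\nu=\inf\{j\geq0:w_j=z\}$ be the number of excursions before $z$ is hit (recall $z\in A$); positive recurrence of the finite induced chain provides a uniform bound $\max_{w\in A}\BbE_w\nu=:N_0<\infty$. Writing the weighted time to reach $z$ from $w\in A$ as $\sum_{j=0}^{\nu-1}\mathcal E_j$, where $\mathcal E_j$ is the cost of the $j$-th excursion, and using that $\{\nu>j\}$ is determined by $w_0,\dots,w_j$ whereas $\BbE(\mathcal E_j\,|\,w_j)\leq C_1$, a Wald-type summation gives
\[\BbE_w\tau_z=\sum_{j\geq0}\BbE\big(\mathcal E_j\,\id_{\{\nu>j\}}\big)\leq C_1\sum_{j\geq0}\BbP(\nu>j)=C_1\,\BbE_w\nu\leq C_1N_0\]
for every $w\in A$. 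Finally, for arbitrary $x\notin A$ I would condition at $\tau_A$ and invoke the strong Markov property together with $\tau_A\leq\tau_z$ (valid since $z\in A$), obtaining $\BbE_x\tau_z=\BbE_x\tau_A+\BbE_x\big[\BbE_{\xi_{\tau_A}}\tau_z\big]\leq C+C_1N_0=:K_z$, the desired uniform bound.

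The step I expect to be the main obstacle is the rigorous justification of the Wald-type interchange: one must check that each excursion cost $\mathcal E_j$ has conditional expectation at most $C_1$ irrespective of the current state $w_j\in A$ and of the whole past, and that $\{\nu>j\}$ is measurable with respect to the induced chain up to step $j$, so that the strong Markov property genuinely decouples the indicator $\id_{\{\nu>j\}}$ from the cost of the $j$-th excursion. The remaining points — well-definedness of the induced chain via recurrence, its irreducibility and hence positive recurrence on the finite set $A$, and the translation between the continuous time $\tau$ and the weighted embedded occupation time — become routine once this uniform per-excursion estimate is secured.
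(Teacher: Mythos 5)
Your proof is correct, and it takes a genuinely different route from the one in the paper. You reduce the continuous-time passage time to a weighted occupation time of the embedded chain, decompose the trajectory into excursions between successive visits to $A$, bound each excursion cost by $\max_{a\in A}\frac{1}{\gamma_a}+C$ using the hypothesis at the first point reached outside $A$, and then sum via a Wald-type identity governed by the induced (trace) chain on the finite set $A$, whose positive recurrence yields the uniform bound $N_0$ on the number of excursions needed to land on $z$. The paper instead works directly in continuous time: for each $a\in A$ it fixes a finite path from $a$ to $z$ of positive probability $\delta_a$, applies Markov's inequality to the sum of the corresponding holding times to obtain a uniform minorization $\BbP_a(\tau_z\leq K)\geq\delta>0$, and then closes a self-referential inequality for $r=\sup_{a\in A}\BbE_a\tau_z$ of the form $r\leq K+\delta(2C+r)$, i.e.\ a geometric-trials argument. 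Your route buys cleaner bookkeeping and an explicit constant $C+C_1N_0$, the only probabilistic inputs being the strong Markov property at return times and positive recurrence of a finite irreducible chain; the decoupling you flag as the delicate step does go through, since $\{\nu>j\}$ is determined by $w_0,\dots,w_j$ while $\BbE(\mathcal{E}_j\mid\cF_{T_j})=\BbE_{w_j}(\mathcal{E}_0)\leq C_1$. The paper's route avoids introducing the trace chain but needs the uniform-in-$a$ lower bound on $\BbP_a(\tau_z\leq K)$ and the fixed-point manipulation. One point worth making explicit in your write-up: irreducibility of the trace chain on $A$ holds because any positive-probability path of the embedded chain from $w$ to $w'$ induces a positive-probability trajectory of the trace chain through its successive intersections with $A$; combined with recurrence, this makes the finite induced chain positive recurrent, which is exactly what your bound $N_0<\infty$ requires.
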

\begin{proof} 
First remark that obviously $\sigma_0\leq\tau_A$, where $\sigma_0$ is the holding time at the initial state, i.e.\
 $\frac{1}{\gamma_x}= \BbE_x(\sigma_0)\leq \BbE_x\tau_A\leq C$. Hence $\underline{\gamma}:=\inf_{x\in\BbX}{\gamma_x} >0$.
Let $a$ be an arbitrary point in $A$ and $z$ an arbitrary state in $\BbX$. Irreducibility means that there exists 
a path of finite length, say $k$, satisfying $a\equiv x_0, \ldots, x_k\equiv z$ and $P_{x_0,x_1} \cdots P_{x_{k-1}, x_k}=\delta_a >0$.  Then, for $K'> \frac{1}{2\underline{\gamma}}\sup_{a\in A} k_a$, we estimate 
\[\BbP_a(\tau_z\leq K') \geq  \BbP_a 
(\sum_{i=0}^{k_a-1} \sigma_i \leq K' |
\tilde{\xi}_1=x_1,\ldots, \tilde{\xi}_k= x_k=z) 
\BbP_a   (\tilde{\xi}_1=x_1,\ldots, \tilde{\xi}_k= x_k=z).\]
Now, Markov's inequality yields
$\BbP_a(\tau_z\leq K') \geq (1-\frac{k_a}{K'\underline{\gamma}})\delta_a\leq \frac{1}{2} \inf_a\delta_a:=\delta'>0$.
Using exactly the same arguments, we show that $\BbP_a(\tau_{A^c} \leq K'')\geq \delta''>0$ for some parameters $K''<\infty$ and $\delta''>0$. 

Denote $K=\max(K', K'')<\infty$,  $\delta=\min(\delta', \delta'')>0$, and 
$r=\sup_{a\in A} \BbE_a(\tau_z)$.
The hypothesis of the proposition  implies
\begin{eqnarray*}
\BbE_x\tau_z &=& \BbE_x(\tau_z|\tau_z\leq \tau_A)\BbP_a(\tau_z\leq \tau_A)+\BbE_x(\tau_z|\tau_z> \tau_A)\BbP_a(\tau_z> \tau_A)\\
&\leq & C +\BbE_x(\tau_z|\tau_z> \tau_A)\BbP_a(\tau_z> \tau_A).
\end{eqnarray*}
Now $\BbE_x(\tau_z|\tau_z>\tau_A) \leq \frac{\BbE_x \tau_A}{\BbP_x(\tau_z>\tau_z)} +\BbE_x (\BbE(\tau_z-\tau_A|\cF_{\tau_A})| \tau_z<\tau_A)$.
We get finally $\BbE_x \tau_z \leq 2C + r$.

On the other hand, 
\begin{eqnarray*}
\BbE_a(\tau_z) &=& \BbE_x(\tau_z|\tau_z\leq K)\BbP_a(\tau_z\leq K)+\BbE_x(\tau_z|\tau_z> K)\BbP_a(\tau_z> K)\\
&=& K + \BbE_a (\BbE_{X_K} (\tau_z)).
\end{eqnarray*}
Now, on the set $\{\tau_z>K\}$,  $X_K$ can be any $a'\in A$ or any $x'\in A^c\setminus \{z\}$. Hence, $\BbE_a(\tau_z) \leq K + \delta \max\left(\sum_{x'\in A^c\setminus\{z\}}\BbE_{x'}\tau_z \BbP_a(X_K=x'), 
 \sum_{a'\in A}\BbE_{a'}\tau_z \BbP_a(X_K=a')\right)$.
 Using the previously obtained majorisations, we get
 $\BbE_a \tau_z \leq K + \delta (2C + r)$ and taking the $\max_a$ of the l.h.s.\ we obtain finally $r(1-\delta) \leq K +2C \delta$ proving thus that
 $\sup_x \BbE_x\tau_z \leq 2C + \frac {K+2C}{1-\delta}$ that guarantees implosion towards any $z\in\BbX$.
 \end{proof}

\vskip3mm
\noindent\textit{Proof of the theorem \ref{thm:implo}.}
\begin{enumerate}
\item 
We first show implosion. Since $f$ is bounded, the condition $\Gamma f(x)=\gamma_x m_f(x) \leq -\epsilon$  guarantees that $\underline{\gamma}=\inf_x \gamma_x>0$.
\begin{description}
\item{[$\Rightarrow$:]}
The condition $\Gamma f(x)\leq -\epsilon$ for $x\not\in \sublev{f}{a}$ guarantees --- by virtue of the 
  proposition \ref{pro:foster} --- that $\BbE_x \tau_{\sublev{f}{a}}\leq \frac{f(x)}{\epsilon}\leq \sup_{z\not\in \sublev{f}{a}}\frac{ f(z)}{\epsilon}$.  Abbreviate $B:=\sublev{f}{a}$ and remark that $B$ is finite by hypothesis. Now, due to recurrence and irreducibility,  if there exists a constant $C$ such that $x\not\in B \Rightarrow \BbE_x\tau_B<C$, then, for every $z\in\BbX$, there exists a constant $C'$ such that $x\ne z\Rightarrow \BbE_x \tau_{\{z\}}<C'$, by virtue of the previous proposition \ref{pro:implosion-irreducibility}.
  \item{[$ \Leftarrow$:]} Suppose now that for a finite $A\in\cX$, there exists a constant $C$ such that $\BbE_x \tau_A\leq C$. Consequenlty $\frac{1}{\gamma_x} \leq \BbE_x \tau_A \leq C$, leading necessarily to the lower bound $\underline{\gamma}>0$. Define 
  \[f(x)=\left\{\begin{array}{ll} 0 & \textrm{if }\ x\in A\\
  \BbE_x \tau_A & \textrm{if }\ x\not\in A.
  \end{array} \right.\]
  Then it is immediate to show that $A=\sublev{f}{1}$ and that 
  for $x\not \in A$, we have $\Gamma f(x) \leq -1$. 
  \end{description}
 \item
 To show non-implosion, use 
 lemma \ref{lem:expec-time} to guarantee that if at some time $t$ the process $\xi_t$ is at some point  $x_0$ sufficiently large, then the time needed for the  process $X_t=f(\xi_t)$ to reach $\sublev{f}{a}$ exceeds $\epsilon f(x_0)$ with some substantially large probability. More precisely,  there exists $\alpha \in ]0,1[$ such that $\BbP_{x_0}(\tau_{\sublev{f}{a}}-t >\epsilon f(x_0))\geq 1-\alpha$. Therefore
$\BbE_{x_0} (\tau_{\sublev{f}{a}})\geq (1-\alpha)\epsilon f(x_0)$ and since $f\rightarrow \infty$ then this expectation cannot be bounded uniformly in $x_0$.
\end{enumerate}
 \eproof

\vskip3mm
\noindent\textit{Proof of the proposition \ref{pro:implosion}.}
Let $G(z)=\int_0^z \frac{dy}{g(y)}$. Since $G'(z)=\frac{1}{g(z)}>0$ the function $G$ is increasing with $G(0)=0$ and $G(b)=B$. Since $g$ is increasing $G'=\frac{1}{g}$ is decreasing, hence the function $G$ is concave. Then concavity leads to the majorisation 
\[m_{G\circ f} (x) \leq G'(f(x)) m_f(x)= \frac{m_f(x)}{g(f(x))}.\]
The condition imposed on the statement of the proposition implies that
$\Gamma G\circ f (x) \leq -1$.
We conclude from lemma \ref{lem:expec-time}.
\eproof

%%%%%%%%%%%%%%%%%%%%%%%%%%%%%%%

\section{Application to some critical models}
\label{sec:exam}

This section intends to treat some problems that even with without
the explosion phenomenon have  critical behaviour illustrating thus how our methods can be applied and showing their power.   

We need first some technical conditions.
Let $f\in\ell^{2+\delta_0}$ for some $\delta_0>0$. For every $g:\BbR_+\rightarrow \BbR_+$ a $C^3$ function, we get
\[m_{g\circ f}(x)=\BbE(g(f(\tilde{\xi}_{n+1}))-g(f(\tilde{\xi}_{n}))|\tilde{\xi}_{n}=x)=
g'(f(x))m_f(x)+\frac{1}{2}g''(f(x))v_f(x)+R_g(x),\]
where $R_g(x)$ is the conditional expectation of the remainder occurring in the
Taylor expansion\footnote{The most convenient form of the remainder is the Roche-Schlömlich one.}.
\begin{defi}{}
\label{def:R}
Let $F=(F_n)_{n\in\BbN}$ be an exhaustive nested sequence
of  sets $F_n\uparrow \BbX$, $f\in \ell_+^{2+\delta}(\Gamma)$ and $g\in C^3(\BbR_+;\BbR_+)$.  
We say that the chain (or its jumps) satisfies the \textit{remainder condition} (or shortly \textit{condition~R})
for $F,f$, and $g$, if
 \[\lim_{n\rightarrow\infty} \sup_{x\in F_n^c} R_g(x)/[g'(f(x))m_f(x)+\frac{1}{2}g''(f(x))v_f(x)]=0.\]
\end{defi}
The quantity $D_g(f,x):=g'(f(x))m_f(x)+\frac{1}{2}g''(f(x))v_f(x)$ in the expression above is the \textit{effective average drift}  at the scale defined by the function $g$. If the function $f$ is non-trivial, there exists a natural exhaustive nested sequence determined by the sub-level sets of $f$. When we omit to specify the nested sequence, we shall always consider the natural  one.

Introduce the notation $\ln_{(0)}s=s$ and recursively, for
all integers $k\geq 1$, $\ln_{(k)} s=\ln(\ln_{(k-1)} s)$ and denote by 
$L_k(s)=\prod_{i=0}^k \ln_{(i)} s$, for $k\geq 0$, and $L_k(s):=1$ for $k<0$.
Equivalently, we define $\exp_{(k)}$ as the inverse function of $\ln_{(k)}$.
In most occasions we shall use (Lyapunov) functions 
$g(s):=\ln_{(l)}^\eta s$
with some integer $l\geq 0$ and some real $\eta\ne0$, defined for $s$ sufficiently large, $s\geq s_0:= \exp_{(l)}(2)$ say. It is cumbersome but straightforward to show then that for 
$f\in\ell^{2+\delta_0}(\Gamma)$ with some $\delta_0>0$, the condition R is satisfied.
It will be shown in this section  that condition R and Lyapunov functions of the aforementioned form play a crucial role in the study of models where the effective average drift $D_g(f,x)$ tends to 0 in some controlled way with $n$ when $x\in F_n^c$, where $(F_n)$ is an exhaustive nested sequence;  models of this type lie in the deeply critical regime between recurrence and transience.

%%%%%%%% LAMPERTI %%%%%%%%%%%%%%%%
\subsection{Critical models on denumerably infinite and unbounded subsets of $\BbR_+$}
\label{ssec:lamperti}
Consider a discrete time irreducible Markov chain $(\tilde{\xi}_n)$  on 
a denumerably infinite and unbounded subset $\BbX$ of $\BbR_+$. Since now $\BbX$ inherits the composition properties stemming from the field structure of $\BbR$, we can define directly $m(x):=m_{\textrm{id}}(x)$ and 
$v(x):=v_{\textrm{id}}(x)$, where $\textrm{id}$ is the identity function on $\BbX$.
The model is termed \textit{critical} when the drift $m$ tends to 0, in some
precise manner, when $x\rightarrow\infty$. 

In \cite{Lam1} Markov chains on $\BbX=\{0,1,2,\ldots\}$ were considered; it has been established that
the chain is recurrent if $m(x)\leq v(x)/2x$ while is transient if
$m(x)> \theta v(x)/2x$ for some $\theta >1$. In particular, if $m(x)=\cO(\frac{1}{x})$ and $v(x)=\cO(1)$ the model is in the  critical regime.

The case with arbitrary degree of criticality
%%%%%%%%%%%%%%%%%%%% 
 \[m(x)=\sum_{i=0}^k \frac{\alpha_i} {L_i(x)}+o\left(\frac{1} {L_k(x)}\right)
 \
 \textrm{and}\ 
v(x)=\sum_{i=0}^k \frac{x\beta_i} {L_i(x)}+o\left(\frac{x} {L_k(x)}\right),\] with $\alpha_i, \beta_i$ constants, has been settled in \cite{MenAsyIas} by using  Lyapunov functions.
Under the additional conditions 
\[\limsup_{n\rightarrow\infty} \tilde{\xi}_n=\infty\ \textrm{and}\ \liminf_{x\in\BbX} v(x)>0\] and some technical moment conditions --- guaranteeing the condition R for this model --- that can straightforwardly be shown to hold if $\textrm{id}\in\ell^{2+\delta_0}(\Gamma)$, for some $\delta_0>2$,  it has been shown in  \cite{MenAsyIas} (theorem 4) that 
\begin{itemize}
\item if $2\alpha_0<\beta_0$ the chain is recurrent while if
$2\alpha_0>\beta_0$ the chain is transient;
\item if $2\alpha_0=\beta_0$ and $2\alpha_i-\beta_i-\beta_0=0$ for all $i: 0\leq i<k$ and there exists $i: 0\leq i<k$ such that $\beta_i>0$ then
\begin{itemize}
\item if $2\alpha_k-\beta_k-\beta_0\leq 0$ the chain is recurrent,
\item if $2\alpha_k-\beta_k-\beta_0>0$ the chain is transient.
\end{itemize}
\end{itemize} 

We assume that the  moment conditions  $\textrm{id}\in\ell^{2+\delta_0}(\Gamma)$ --- guaranteeing the condition~R for this model --- are satisfied through out this section.

Let $(\tilde{\xi}_n)$ be a Markov chain on $\BbX=\{0,1,\ldots\}$ satisfying for some $k\geq 0$
\[m(x)=\sum_{i=0}^{k-1} \frac{\beta_0} {2L_i(x)}
+\frac{\alpha_k} {L_k(x)} +o\left(\frac{1} {L_k(x)}\right)
\
\textrm{and}\
v(x)=\beta_0+o(1),\]
with $2a_k>\beta_0$. The aforementioned result  guarantees the transience of the chain; we term such a chain $k$-\textit{critically transient}.
When $2\alpha_k<\beta_0$ the previous result guarantees the recurrence of the chain; we term such a chain $k$-\textit{critically recurrent}.
 
In spite of its seemingly idiosyncratic character, this model proves universal as
Lyapunov functions used in the study of many general models in critical regimes  map those models to some $k$-critical  models on denumerably infinite unbounded subsets of $\BbR_+$.

\subsubsection{Moments of passage times}

\begin{prop}
\label{prop:k-critical-moments}
Let $(\xi_t)$ be a continuous-time Markov chain on $\BbX$ and $A$ be the finite set $A:=[0,x_0]\cap\BbX$ for some sufficiently large $x_0$. 
Suppose that for some integer $k\geq 0$, its embedded chain is $k$-critically 
recurrent, i.e.\
\[m(x)=\sum_{i=0}^{k-1} \frac{\beta_0} {2L_i(x)}
+\frac{\alpha_k} {L_k(x)} +o\left(\frac{1} {L_k(x)}\right)
\
\textrm{and}\
v(x)=\beta_0+o(1),\]
with $2a_k<\beta_0$. Denote $C=\alpha_k/\beta_0$ (hence $C<1/2$) and assume there exists  a constant $\kappa>0$ such that
\footnote{
i.e.\ there exists a constant $c_3>0$ such that $\frac{1}{c_3} \leq \frac{\gamma_x \ln_{(k)}^\kappa (x)}{L_k^2(x)}\leq  c_3$ for $x\geq x_0$.
}
 $\gamma_x=\cO(\frac{L_k^2(x)}{\ln_{(k)}^\kappa (x)})$ for large $x$.
 Define $p_0:=p_0(C,\kappa)= (1-2C)/\kappa$. 
\begin{enumerate}
\item  Assume that $v^{(\rho)}(x)=\cO(1)$ with $\rho=\max(2,1-2C)+\delta_0$. If $q<p_0$ then $\BbE_x\tau_A^q <\infty$.
\item Assume that $v^{(\rho)}(x)=\cO(1)$ with $\rho=\max(2,p_0)+\delta_0$. If $q\geq
p_0$ then $\BbE_x\tau_A^q =\infty$.
\end{enumerate}
\end{prop}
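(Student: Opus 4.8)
The strategy is to reduce the statement to the general machinery already built, namely Theorem \ref{thm:moments}, by exhibiting the correct Lyapunov function and verifying the generator inequalities it demands. The guiding principle is that a $k$-critically recurrent chain behaves, after the right logarithmic change of scale, like a process whose effective drift decays in a controlled way; the holding-rate assumption $\gamma_x = \cO(L_k^2(x)/\ln_{(k)}^\kappa(x))$ then dictates, through the relation $\Gamma f = \gamma_x m_f$, how the continuous-time generator acts. First I would seek a Lyapunov function of the form $f = \exp_{(k)}(\cdot)$ composed with a suitable power or logarithm, or more directly a function $\phi$ built out of the iterated logarithms $L_i$ and $\ln_{(k)}$, chosen so that the discrete effective average drift $D_\phi(\mathrm{id},x)$ computed via the condition~R expansion combines the $k$-critical coefficients $\beta_0$ and $\alpha_k$ in exactly the combination $1-2C = 1-2\alpha_k/\beta_0$. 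The arithmetic here is the heart of the matter: using $m(x)=\sum_{i=0}^{k-1}\tfrac{\beta_0}{2L_i(x)}+\tfrac{\alpha_k}{L_k(x)}+o(\tfrac1{L_k(x)})$ and $v(x)=\beta_0+o(1)$, one expands $m_{g\circ f}$ through Definition \ref{def:R}, and the telescoping of the $\beta_0/2L_i$ terms against the second-derivative term $\tfrac12 g'' v$ is what produces a clean leading behaviour at scale $\ln_{(k)}$.

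Concretely, for statement 1 I would exhibit a function $f\to\infty$ and an exponent $p$ with $p/2$ approaching $p_0 = (1-2C)/\kappa$ from below, such that $\Gamma f^p(x)\le -c\,f^{p-2}(x)$ holds outside a finite sublevel set. The factor $\gamma_x$ is precisely what converts the discrete drift $m_{f^p}(x)$, which decays like $L_k^{-2}$ up to the $\ln_{(k)}^{\cdots}$ correction coming from the power $p$ and the logarithmic Lyapunov function, into a generator bound matching the target $-c f^{p-2}$. Once this inequality is in place, statement~1 follows verbatim from part~1 of Theorem \ref{thm:moments}, giving $\BbE_x\tau_A^q<\infty$ for all $q<p/2$, hence for all $q<p_0$ after optimising over $p$. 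The moment assumption $v^{(\rho)}(x)=\cO(1)$ with $\rho=\max(2,1-2C)+\delta_0$ is exactly the integrability guaranteeing $f^p\in\Dom_+(\Gamma)$ and validating the condition~R Taylor expansion, so I would verify that the chosen $f$ lies in $\ell^{2+\delta_0}$ and carry the remainder terms along as lower order.

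For statement 2 I would apply part~2 of Theorem \ref{thm:moments}, most naturally in the simplified form $g=f$ described in the remark following that theorem, reducing the task to checking the two inequalities $\Gamma f^r(x)\le c_2 f^{r-1}(x)$ for some $r>1$ and $\Gamma f^p(x)\ge 0$ for the critical exponent $p=p_0$, outside the appropriate sublevel sets. The first is an upper bound of the same type already computed for statement~1 (with a different exponent), while the second — the sign reversal $\Gamma f^{p_0}\ge 0$ — is where the threshold value $p_0$ is pinned down: at $p=p_0$ the leading $-c f^{p-2}$ contribution degenerates and the effective drift becomes non-negative, reflecting that $p_0$ is exactly the borderline moment. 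The main obstacle throughout is the bookkeeping of the iterated-logarithm expansion of $\Gamma f^p$: one must track how the power $p$, the holding rate exponent $\kappa$, and the criticality coefficient $C$ interact so that the sign of the leading coefficient flips precisely as $p$ crosses $p_0$, and one must confirm the condition~R remainder is negligible under the stated $v^{(\rho)}$ hypothesis with $\rho=\max(2,p_0)+\delta_0$. This delicate leading-order computation, rather than any soft argument, is where the proof's real work lies.
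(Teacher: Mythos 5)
Your proposal is correct and follows essentially the same route as the paper: the authors take the Lyapunov function $f(x)=\ln_{(k)}^{\eta}x$, compute $m_f(x)=\tfrac12\beta_0\eta(2C+\eta-1+o(1))\ln_{(k)}^{\eta}(x)/L_k^2(x)$, and feed the resulting bounds on $\Gamma f^p$ (using $\Gamma f^p=\gamma_x m_{f^p}$ and the hypothesis on $\gamma_x$) into statements 1 and 2 of Theorem \ref{thm:moments} with $g=f$, optimising over the admissible $\eta$ and $p$ exactly as you describe to pin down $p_0=(1-2C)/\kappa$. The only detail you leave implicit that the paper makes explicit is the case split $1-2C<\kappa$ versus $\kappa\le 1-2C$ in the choice of $\eta$ for statement 2, but this is bookkeeping within the plan you outline.
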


\begin{rema}{} 
We remark that when $\kappa\downarrow 0$ (for fixed $k$ and $C$) then $p_0\uparrow \infty$ implying that more and more moments exist.
\end{rema}

  \vskip3mm
\noindent\textit{Proof of the proposition \ref{prop:k-critical-moments}.}
For the function $f(x)=\ln_{(k)}^\eta x$ we determine
\[m_f(x)=\frac{1}{2}\beta_0\eta(2C+\eta-1+o(1))\frac{\ln_{(k)}^\eta(x)}{L_k^2(x)}.\]
\begin{enumerate}
\item 
For a $p>0$, we remark that
\[[0<p\eta< 1-2C \ \ \textrm{and}\ \  \eta\geq \frac{\kappa}{2}] 
 \Rightarrow 
\Gamma f^p(x) \leq -c f^{p-2}(x),\]
for some constant\footnote{The constant $c$ can be chosen $c\geq\frac{1}{2}\beta_0\eta (1-2C)c_3$.}  $c>0$.
Hence $p< 2(1-2C)/\kappa$ and statement 1 of the theorem \ref{thm:moments} implies
 for any $q\in]0,(1-2C)/\kappa[$, the $q$-th moment of the passage time exists. Optimising over the accessible values of $q$ we get  that $\BbE_x(\tau^q)<\infty$ for all $q<p_0$.
\item We distinguish two cases:
\begin{description}
\item{[$1-2C < \kappa$:]} We verify that, choosing $\eta\in]1-2C, \kappa]$ and 
$p>\frac{1-2C}{\kappa}$, the three conditions conditions of statement 2
of the theorem \ref{thm:moments}  (with $f=g$), namely
$\Gamma f \geq -c_1$,  $\Gamma f^r \leq c_2 f^{r-1}$, for some $r>1$, and $\Gamma f^p\geq 0$, outside some finite set $A$.
\item{[$ \kappa \leq 1-2C$:]} In this situation,  choosing the parameters  $\eta\in]0,\kappa]$ and $p>\frac{1-2C}{\kappa}$ implies simultaneous verification of the three conditions of statement 2 of the theorem \ref{thm:moments}.
\end{description}
In both situations, we conclude that  for all $q>\frac{1-2C}{\kappa}$, the corresponding moment does not exist. 
Optimising over the accessible values of $q$, we get  that $\BbE_x(\tau^q)=\infty$ for all $q>p_0$.
\end{enumerate}
\eproof

\subsubsection{Explosion and implosion}

\begin{prop}
\label{prop:k-crit-explo} Let $(\xi_t)$ be a continuous time Markov chain.
Suppose that for some integer $k\geq 0$, its embedded chain is $k$-critically transient.
 \begin{enumerate}
\item
If there exist a constant $d_1>0$, an integer $l>k$, and a  real $\kappa>0$ (arbitrarily small)  such that
\[\gamma_x\geq d_1L_k(x)L_l(x)  (\ln_{(l)}x)^{\kappa}, x\geq x_0,\]
then $\BbP_y(\zeta<\infty)=1$ for all $y\in\BbX$.
\item
If there exist a constant $d_2>0$ and an integer $l> k$ such that 
\[\gamma_x\leq d_2 L_k(x)L_l(x), x\geq x_0,\]
then the continuous-time chain is conservative.
\end{enumerate}
\end{prop}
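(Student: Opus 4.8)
The plan is to handle both statements with one family of Lyapunov functions, the iterated logarithms $f(x)=\ln_{(l)}^{\eta}x$ (defined for $x\geq x_0$), taking $\eta<0$ for the explosion part and $\eta>0$ for the conservativeness part, and to feed the image process $X_t=f(\xi_t)$ into the criteria already proven: theorem \ref{thm:explo} (equivalently proposition \ref{pro:explo}), or the conditional version theorem \ref{thm:cond-explo}, for statement~1, and theorem \ref{thm:non-explo} for statement~2. Since $\Gamma f(x)=\gamma_x m_f(x)$, everything reduces to estimating $m_f(x)$ with enough precision; the moment assumption $\mathrm{id}\in\ell^{2+\delta_0}(\Gamma)$ of the section guarantees that $f$ and the powers we need lie in the relevant $\Dom(\Gamma)$ and that condition~R of definition \ref{def:R} holds, so the Taylor remainder may be discarded.

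First I would compute $m_f$ from the expansion $m_f(x)=f'(x)m(x)+\tfrac12 f''(x)v(x)+R_f(x)$. Using $\frac{d}{dx}\ln_{(l)}x=1/L_{l-1}(x)$ one gets $f'(x)=\eta\ln_{(l)}^{\eta-1}x\,/L_{l-1}(x)$, and, differentiating once more with $L_{l-1}'(x)=L_{l-1}(x)\sum_{i=0}^{l-1}1/L_i(x)$, a second derivative whose dominant part is $-\eta\ln_{(l)}^{\eta-1}x\,L_{l-1}^{-1}(x)\sum_{i=0}^{l-1}1/L_i(x)$. The decisive point --- and the reason the model is called $k$-critical --- is a cancellation: plugging in $m(x)=\sum_{i=0}^{k-1}\frac{\beta_0}{2L_i(x)}+\frac{\alpha_k}{L_k(x)}+o(1/L_k(x))$ and $v(x)=\beta_0+o(1)$, the contributions of orders $1/L_0,\dots,1/L_{k-1}$ produced by $f'(x)m(x)$ are matched and annihilated termwise by those produced by $\tfrac12 f''(x)v(x)$; the first surviving term is of order $1/L_k$, with coefficient $\alpha_k-\beta_0/2$, which is strictly positive precisely because the chain is $k$-critically transient. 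The outcome I expect is
\[m_f(x)\sim \eta\Bigl(\alpha_k-\tfrac{\beta_0}{2}\Bigr)\frac{\ln_{(l)}^{\eta-1}x}{L_k(x)L_{l-1}(x)},\qquad \Gamma f(x)\sim \eta\Bigl(\alpha_k-\tfrac{\beta_0}{2}\Bigr)\frac{\gamma_x}{L_k(x)L_l(x)}\,f(x),\]
where I used $\ln_{(l)}^{\eta-1}x/L_{l-1}(x)=f(x)/L_l(x)$.

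For statement~1 I would then take $\eta=-\kappa<0$, so that $f$ is strictly positive and bounded; the hypothesis $\gamma_x\geq d_1L_k(x)L_l(x)(\ln_{(l)}x)^{\kappa}$ yields $\gamma_x f(x)/(L_k(x)L_l(x))\geq d_1(\ln_{(l)}x)^{\kappa}\ln_{(l)}^{-\kappa}x=d_1$, whence $\Gamma f(x)\leq-\kappa(\alpha_k-\beta_0/2)d_1=:-\epsilon$ for $x$ large, after absorbing the $o(1)$ and the remainder by enlarging $x_0$. Applying theorem \ref{thm:cond-explo} with $A:=[0,x_0]\cap\BbX$ (on whose complement the drift bound holds) gives $\BbE_y(\zeta\mid\tau_A=\infty)<\infty$; I would upgrade this to the unconditional $\BbP_y(\zeta<\infty)=1$ using transience, since the chain visits the finite set $A$ only finitely often, so on its final (never-returning) excursion the conditional estimate forces explosion in finite time, while the finitely many earlier pieces have a.s.\ finite duration. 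For statement~2 I would take any $\eta>0$, so $f\rightarrow\infty$; the hypothesis $\gamma_x\leq d_2 L_k(x)L_l(x)$ now gives $\Gamma f(x)\leq C'f(x)$ with $C'=\eta(\alpha_k-\beta_0/2)d_2$, and choosing $g(s)=C's+c_0$ with $c_0>0$ --- which keeps $\int_0^z dy/g(y)$ finite at $0$ while leaving the tail non-integrable, and simultaneously absorbs the finitely many states where the asymptotics are not yet valid --- places us in the hypotheses of theorem \ref{thm:non-explo}, so $\BbP_y(\zeta=\infty)=1$.

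The hard part will be the drift computation of the second paragraph: extracting the surviving $1/L_k$ term from the exact mutual cancellation of all lower orders, and checking that condition~R controls the remainder uniformly at the scale of this surviving term, rather than merely at the scale of the (cancelled) leading terms. A secondary technical point, present only in statement~1, is that theorem \ref{thm:explo} requires the drift bound everywhere; I sidestep the global extension of the bounded $f$ across the finite set by invoking the conditional theorem \ref{thm:cond-explo} together with the transience upgrade just described, which is why the conclusion is stated as almost sure finiteness of $\zeta$ rather than its integrability.
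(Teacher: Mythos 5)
Your proposal is correct and rests on the same core mechanism as the paper's proof: iterated-logarithm Lyapunov functions, the termwise cancellation of the orders $1/L_0,\dots,1/L_{k-1}$ between $f'(x)m(x)$ and $\tfrac12 f''(x)v(x)$ leaving a surviving $1/L_k$ term of sign $\mathrm{sign}(\eta)(\alpha_k-\beta_0/2)$, and the general explosion/non-explosion criteria; your drift asymptotics agree with the paper's up to an immaterial multiplicative constant. The two genuine differences are these. For statement~1 the paper takes $f(x)=\ln_{(l)}^{-\eta}x$ and simply ``concludes by theorem \ref{thm:explo}'', even though that theorem demands $\Gamma f\leq-\epsilon$ on \emph{all} of $\BbX$ while the drift estimate is only valid for $x\geq x_0$; you instead invoke the conditional theorem \ref{thm:cond-explo} on $A^c$ with $A=[0,x_0]\cap\BbX$ and then remove the conditioning via transience (the finite set $A$ is visited finitely often, and the standard iteration $\BbP_y(\zeta=\infty)\leq M\,\BbP_y(A\text{ visited at least }n\text{ times})\to0$ makes your ``final excursion'' heuristic rigorous). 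This is a more careful route, it explains why the proposition asserts only $\BbP_y(\zeta<\infty)=1$ rather than $\BbE_y\zeta<\infty$, and it is the natural repair of the gap in the paper's one-line conclusion. For statement~2 the paper uses the Lyapunov function $\ln_{(l+1)}$ with $g=L_l$, whereas you use $\ln_{(l)}^{\eta}$ with an affine $g(s)=C's+c_0$; these are equivalent in substance, and your additive constant $c_0$ is a clean way to absorb the finitely many states where the asymptotics have not yet set in, a point the paper again leaves implicit. Your closing caveat about condition~R is well placed: as defined in definition \ref{def:R} the remainder is compared to the \emph{effective} drift $D_g(f,x)$, i.e.\ to the post-cancellation quantity, which is exactly what the standing assumption $\mathrm{id}\in\ell^{2+\delta_0}(\Gamma)$ of \S\ref{ssec:lamperti} is invoked to guarantee.
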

\begin{proof}

\begin{enumerate}
\item
For a $k$-critically transient chain, chose a function $f$ behaving at large $x$ as $f(x)=\frac{1}{\ln_{(l)}^\eta(x)}$.  Denote $C=\alpha_k/\beta_0$ (hence $C>1/2$ for the chain to be transient).
We estimate then 
\[m_{f}(x)= -2\beta_0\eta (2C-1)\frac{1}{L_k(x)L_l(x)\ln_{(l)}^\eta x}+ o(\frac{1}{L_k(x)L_l(x)\ln_{(l)}^\eta x}).\]
We conclude by theorem \ref{thm:explo}.
\item Choosing as Lyapunov function the identity function $f(x)=x$ and  estimate
\[m_{\ln_{(l+1)}\circ f}(x)= 2\beta_0(2C-1)\frac{\ln_{(l+1)}x}{L_k(x)L_{l+1}(x)}=
2\beta_0(2C-1)\frac{1}{L_k(x)L_{l}(x)}.\]
We conclude by  theorem \ref{thm:non-explo} by choosing $g(s) =L_l(s)$.
\end{enumerate}
\end{proof}

\begin{prop}
\label{prop:k-crit-implo}
Let $(\xi_t)$ be a continuous time Markov chain.
Suppose that for some integer $k\geq 0$, its embedded chain is $k$-critically 
recurrent. Denote $C=\alpha_k/\beta_0$ (hence $C<1/2$ for the chain to be recurrent). Let  $A$ be the finite set $A:=[0,x_0]\cap\BbX$ for some sufficiently large $x_0$.
\begin{enumerate}
\item
If there exist a constant $d_1>$, an integer $l>k$, and an arbitrarily small  real $\kappa>0$  such that
\[\gamma_x\geq d_1 L_k(x)L_l(x)  (\ln_{(l)}x)^{\kappa}, x\geq x_0,\]
then there exists a constant $B$ such that  $\BbE_y\tau_A \leq B$, uniformly in $y\in A^c$, i.e.\ the chain implodes.
\item
If there exist a constant $d_2>0$ and an integer $l> k$ such that 
\[\gamma_x\leq d_2 L_k(x)L_l(x), x\geq x_0,\]
then the continuous time chain does not implode.
\end{enumerate}
\end{prop}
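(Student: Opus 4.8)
The plan is to deduce both statements directly from the general implosion criteria of theorem \ref{thm:implo}, exactly paralleling the treatment of explosion in proposition \ref{prop:k-crit-explo}. The only structural change is that the embedded chain is now $k$-critically \emph{recurrent}, so $C=\alpha_k/\beta_0<1/2$ and the factor $2C-1$ that governs every effective drift computation is now \emph{negative}. Throughout, the standing moment hypothesis $\textrm{id}\in\ell^{2+\delta_0}(\Gamma)$ guarantees condition R, so that for each slowly-varying Lyapunov function used below the effective average drift controls $m_{g\circ f}$ up to negligible Taylor remainders.

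For statement 1, I would exhibit a \emph{bounded} Lyapunov function satisfying the uniform drift inequality required by the first part of theorem \ref{thm:implo}. Mirroring the explosion proof, set $f(x)=b-\ln_{(l)}^{-\eta}(x)$ for $x\geq x_0$ (assigning any non-negative values on the finite set $A$), with $\eta>0$ to be chosen; this $f$ is bounded, hence automatically in $\Dom_+(\Gamma)$, increases to $b$, and has finite sublevel sets, so $\sublev{f}{a}$ can be arranged to coincide with $A$. Writing $f=b-h$ with $h=\ln_{(l)}^{-\eta}$, the computation underlying proposition \ref{prop:k-crit-explo} gives $m_f(x)=-m_h(x)=2\beta_0\eta(2C-1)L_k(x)^{-1}L_l(x)^{-1}\ln_{(l)}^{-\eta}(x)(1+o(1))$, which is negative since $2C-1<0$. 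Feeding in the lower bound $\gamma_x\geq d_1 L_k(x)L_l(x)(\ln_{(l)}x)^{\kappa}$ yields
\[
\Gamma f(x)=\gamma_x m_f(x)\leq -2d_1\beta_0\eta(1-2C)\,(\ln_{(l)}x)^{\kappa-\eta}\,(1+o(1)).
\]
Choosing $0<\eta\leq\kappa$ makes the exponent $\kappa-\eta$ non-negative, so $\Gamma f(x)\leq-\epsilon$ outside a finite set for some $\epsilon>0$. Recurrence of the embedded chain is assumed, so the first statement of theorem \ref{thm:implo} applies and delivers implosion towards $A$.

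For statement 2, I would instead produce a function tending to infinity satisfying the two inequalities of the second part of theorem \ref{thm:implo}, which forces non-implosion. Take $f(x)=\ln_{(l+1)}(x)$, so $f\rightarrow\infty$. The effective-drift computation of the non-explosion part of proposition \ref{prop:k-crit-explo} gives $m_f(x)=2\beta_0(2C-1)L_k(x)^{-1}L_l(x)^{-1}(1+o(1))<0$; hence, using $\gamma_x\leq d_2 L_k(x)L_l(x)$, we obtain $\Gamma f(x)\geq-2\beta_0 d_2(1-2C)(1+o(1))\geq-\epsilon$, which is the first required inequality. For the second, I compute $m_{f^r}$ through the expansion $m_{f^r}=rf^{r-1}m_f+\tfrac12 r(r-1)f^{r-2}v_f+R$; since $v_f(x)=\beta_0 L_l(x)^{-2}(1+o(1))$, the $m_f$-term of order $\ln_{(l+1)}^{r-1}/(L_kL_l)$ dominates the variance term of order $\ln_{(l+1)}^{r-2}/L_l^2$, giving
\[
m_{f^r}(x)=2r\beta_0(2C-1)\,\ln_{(l+1)}^{r-1}(x)\,L_k(x)^{-1}L_l(x)^{-1}(1+o(1))<0.
\]
Consequently $\Gamma f^r(x)=\gamma_x m_{f^r}(x)<0\leq c\,f^{r-1}(x)$ for any $c>0$, so the second inequality holds trivially. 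Thus theorem \ref{thm:implo} yields non-implosion towards $\sublev{f}{a}$; since that sublevel set is finite and the chain is recurrent and irreducible, proposition \ref{pro:implosion-irreducibility} makes implosion towards any finite set equivalent, so the chain does not implode at all.

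The main obstacle is the delicate Lamperti cancellation pervading this whole section: in each drift computation above the naive leading terms cancel, and the surviving term of order $(2C-1)/(L_kL_l)$ emerges only after carefully combining the $m$- and $v$-contributions and discarding the remainder. This is precisely where condition R (guaranteed by $\textrm{id}\in\ell^{2+\delta_0}$) is essential, and where one must also verify that the chosen $f$ and $f^r$ genuinely lie in $\Dom_+(\Gamma)$. Once these asymptotics are secured with the correct sign of $2C-1$, both statements reduce to a direct citation of theorem \ref{thm:implo}, supplemented by proposition \ref{pro:implosion-irreducibility} for statement 2.
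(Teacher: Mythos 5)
Your proposal is correct and follows essentially the same route as the paper: statement 1 via the bounded Lyapunov function $f(x)=1-\ln_{(l)}^{-\eta}(x)$ (with $\eta\leq\kappa$ so that $\Gamma f\leq-\epsilon$) fed into the first part of theorem \ref{thm:implo}, and statement 2 via the drift estimates for $\ln_{(l+1)}^{\eta}$ at $\eta=1$ and $\eta=r$ fed into the second part. Your added details --- the explicit choice of $\eta$, the expansion of $m_{f^r}$ showing it is negative so that $\Gamma f^r\leq cf^{r-1}$ holds trivially, and the appeal to proposition \ref{pro:implosion-irreducibility} to pass between finite target sets --- merely flesh out steps the paper leaves implicit.
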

\begin{proof}
\begin{enumerate}
\item
Use the function $f$ defined for sufficiently large $x$ by the formula
$f(x)=1-\frac{1}{\ln_{(l)}^\eta x}$, for some $l> k$ and $\eta>0$.
We estimate then 
\[m_f(x) = 2\beta_0 \eta (2C-1)  \frac{1}{L_k(x)L_l(x)\ln_{(l)}^\eta x}+ o(\frac{1}{L_k(x)L_l(x)\ln_{(l)}^\eta x}).\]
We conclude by statement 1 of theorem \ref{thm:implo}.
\item
Using the function $f$ defined for sufficiently large $x$ by the formula
$f(x)=\ln_{(l+1)}^\eta x$, for some $l\geq k$.
We estimate then 
\[m_f(x) = 2\beta_0 \eta (2C-1)  
\frac{\ln_{(l+1)}^\eta x}{L_k(x)L_{l+1}(x)}+ o(\frac{\ln_{(l+1)}^\eta x}
{L_k(x)L_{l+1}(x)}).\]
If $\gamma_x \leq d_2 L_k(x) L_l(x)$ for large $x$, then,
using the above estimate for the case $\eta=1$ and the case $\eta=r$ for some small $r>1$,
we observe that the conditions $\Gamma f  \geq -\epsilon$ and
$\Gamma f^r \leq f^{r-1}$ are simultaneously verified. We conclude by the statement 2 of the theorem \ref{thm:implo}.
\end{enumerate}
\end{proof}

%%%%% RANDOM WALK %%%%%%%%%%%%%%
 
\subsection{Simple random walk on $\BbZ^d$ for $d=2$ and $d\geq 3$}
\label{ssec:srw}
Here the state space $\BbX=\BbZ^d$ and the embedded chain is a simple random walk on $\BbX$. Since in dimension 2 the simple random walk is null recurrent while in dimension $d\geq 3$ is transient, a different treatment is imposed.

\subsubsection{Dimension $d\geq 3$}
For the Lyapunov function $f$ defined by 
$\BbZ^d\ni x \mapsto f(x):=\|x\|$, we can show that there exist constants 
$\alpha_0>0$ and $\beta_0>0$ such that $\lim_{\|x\|\rightarrow \infty} \|x\| m_f(x)=\alpha_0$ and
 $\lim_{\|x\|\rightarrow \infty}  v_f(x)=\beta_0$ such that $C=\alpha_0/\beta_0> 1/2$. Therefore the one dimensional process $X_t=f(\xi_t)$ has $0$-critically transient Lamperti behaviour.
 
We get therefore that  $(\xi_t)_{t\in\BbR_+}$ is a 
(quite unsurprisingly) transient process and that if there exist a constant $a>0$ and 
\begin{itemize}
\item a constant $d_1>0$, an integer $l>0$, and a  real $\kappa>0$ (arbitrarily small)  such that
\[\gamma_x\geq d_1\|x\|L_l(\|x\|)  (\ln_{(l)}\|x\|)^{\kappa}, \|x\|\geq a,\]
then $\BbP_y(\zeta<\infty)=1$ for all $y\in\BbX$;
\item
a constant $d_2>0$ and an integer $l> 0$ such that 
\[\gamma_x\leq d_2 \|x\|L_l(\|x\|), \|x\|\geq a,\]
then the continuous time chain is conservative.
\end{itemize}

\subsubsection{Dimension 2}

Using again the Lyapunov function $f(x)=\|x\|$, we show that the one dimensional process $X_t=f(\xi_t)$ is of the  $1$-critically recurrent Lamperti type. Hence, again using the results obtained in \S \ref{ssec:lamperti}, 
we get that if there exist a constant $a>0$ and 
\begin{itemize}
\item
a constant $d_1>0$, an integer $l>1$, and an arbitrarily small  real $\kappa>0$  such that
\[\gamma_x\geq d_1 L_1(\|x\|)L_l(\|x\|)  (\ln_{(l)}\|x\|)^{\kappa}, \|x\|\geq a,\]
then there exists a constant $C$ such that  $\BbE_y\tau_A \leq C$, uniformly in $y$ for $y: \|y\|\geq a $, i.e.\ the chain implodes;
\item
a constant $d_2>0$ and an integer $l> 1$ such that 
\[\gamma_x\leq d_2 L_1(\|x\|)L_l(\|x\|), \|x\|\geq a,\]
then the continuous time chain does not implode.
\end{itemize}

%%%%%%%% WEDGE %%%%%%%%%%%%%%%%

\subsection{Random walk on $\BbZ_+^2$ with reflecting boundaries}
\label{ssec:quadrant}
\subsubsection{The model in discrete time}
Here $\BbX=\BbZ_+^2$. We denote by $\oc{\BbX}=\{x\in\BbZ_+^2:
x_1>0, x_2>0\}$ the \textit{interior} of the wedge 
and by $\partial_1 \BbX=\{x\in\BbZ_+^2:
x_2=0\}$ (and similarly for $\partial_2 \BbX$) its \textit{boundaries}. Since $\BbX$ is a subset of a vector space, we can
define  directly the increment vector $D:= \tilde{\xi}_{n+1}-\tilde{\xi}_{n}$ and the
 average conditional  drift $m(x):=m_{\textrm{id}}(x)=\BbE(D|\tilde{\xi}_{n}=x)\in\BbR^2$. We assume that
for  all $x\in\oc{\BbX}$, $m(x)=0$ so that we are in a critical regime. For $x\in\partial_\flat \BbX$; with $\flat=1,2$, the drift $m^\flat(x)$ does not vanish but is a constant vector $m^\flat$ that forms angles $\phi^\flat$ with respect to the normal to $\partial_\flat\BbX$. For $x\in\oc{\BbX}$, the conditional covariance matrix $C(x):=(C(x)_{ij})$,
with
$C(x)_{ij}(=  \BbE[D_i D_j|\tilde{\xi}_{n}=x]$, is the constant $2\times2$ matrix $C$, reading 
\[C:=\Cov(D,D)=\begin{pmatrix} s_1^2 & \lambda\\ \lambda & s_2^2 \end{pmatrix}.\]
There exists an isomorphism $\Phi$ on $\BbR^2$  such that $\Cov(\Phi D, \Phi D) = \Phi C \Phi^t = I$; it is elementary to show that 
\[\Phi=\begin{pmatrix} \frac{s_2}{d} & -\frac{\lambda}{s_2 d}\\ 0 & \frac{1}{s_2} \end {pmatrix},\] where $d=\sqrt{\det C}$, is a solution to the aforementioned isomorphism equation. This isomorphism maps the quadrant $\BbR^2_+$ into a squeezed wedge $\Phi(\BbR^2_+)$ having an angle $\psi$ at its summit reading $\psi=\arctan (-d^2/\lambda)$. Obviously $\psi=\pi/2$ if $\lambda=0$, while $\psi\in ]0, \pi/2[$ if $\lambda<0$ and
$\psi\in ]\pi/2, \pi[$ if $\lambda >0$. We denote $\BbY=\Phi(\BbX)$ the squeezed image of the lattice. The isomorphism $\Phi$ transforms the average drifts at the boundaries into  $n^\flat =\Phi m^\flat$ forming new  angles, $\psi_\flat$, with the normal to the boundaries of $\Phi(\BbR_+^2)$.

The discrete time model has been exhaustively treated in \cite{AspIasMen} and its extension to the case of excitable boundaries carrying internal states
in \cite{MenPet-wedge}. Here we recall the main results of \cite{AspIasMen}
under some simplifying assumptions that allow us to present them here without redefining completely the model or considering all the technicalities. 
The assumptions we need are that the jumps
\begin{itemize}
\item are bounded from below, i.e.\
there exists a constant $K>0$ such that $D_1\geq -K$ and $D_2\geq -K$,
\item satisfy a sufficient integrability condition, for instance
$\BbE(\|D\|_{2+\delta_0}^ {2+\delta_0} )<\infty$ for some $\delta_0>0$, 
\item are such that their covariance matrix is non degenerate.
\end{itemize}
Under these assumptions 
we can state the following simplified version of the results in \cite{AspIasMen}.

Denote by $\chi=(\psi_1+\psi_2)/\psi$ and $A=\{x\in\BbX: \|x\|\leq a\}$. 
\begin{enumerate}
\item If $\chi\geq 0$ the chain is recurrent.
\item If $\chi< 0$ the chain is transient.
\item If $0<\chi<2+\delta_0$, then for every $p<\chi/2$ and every $x\not\in A$,
$\BbE_x \tau_A^p <\infty$.
\item If $0<\chi<2+\delta_0$, then for every $p>\chi/2$ and every $x\not\in A$,
$\BbE_x \tau_A^p =\infty$.
\end{enumerate}

Let $f:\BbR^2_+\rightarrow \BbR_+$ be a $C^2$ function. Define $\tilde{f}(y)=f(\Phi^{-1} y)$. Although the Hessian operator does not in general transform as a tensor, the linearity of $\Phi$ allows
however to write 
$\Hess_f(x) = \Phi^t \Hess_{\tilde{f}}(\Phi x) \Phi$. 
For every $x\in\oc{\BbX}$ we establish then\footnote{Since we have used the symbol $\Delta$ to denote the jumps of the process, we introduce the symbol $\Lap$ to denote the Laplacian.} the identity:
\cut{
\begin{eqnarray*}
\BbE(\bra D, \Hess_f(x) D\ket| \tilde{\xi}_n=x)&=&
\sum_{i,j} \Cov ((\Phi D)_i, (\Phi D)_j)|  \tilde{\xi}_n=x) (\Hess_{\tilde{f}}(\Phi x))_{ij}\\
&=& \tr \Hess_{\tilde{f}}(\Phi x)=  \Lap_{f\circ \Phi^{-1}}\circ\Phi(x).
\end{eqnarray*}
}
\[\BbE(\bra D, \Hess_{f\circ \Phi} (x) D\ket| \tilde{\xi}_n=x)= \Lap_{f\circ\Phi}(x).\]

We denote by $h_{\beta, \beta_1}(x)=\|x\|^\beta \cos(\beta \arctan (\frac{x_2}{x_1})-\beta_1)$. Then this function is harmonic, i.e.\ $\Lap_{h_{\beta, \beta_1}}=0$.
We are interested in harmonic functions that are positive on $\Phi(\BbR^2_+)$; positivity and geometry impose then conditions on $\beta$ and $\beta_1$.
In fact, $\sign(\beta)\beta_1$ is the angle of 
$\nabla h_{\beta, \beta_1}(x)$ at $x\in\partial_1\BbX$,  with the normal to 
$\partial_1\BbX$. Similarly, if $\beta_2=\beta \psi-\beta_1$, then $\sign(\beta)\beta_2$ is the angle of the gradient with the normal to $\partial_2\BbX$. Now, it becomes evident that
$\beta_i$, $i=1,2$, must lie in the interval $]-\pi/2,\pi/2[$ and subsequently $\beta=\frac{\beta_1+\beta_2}{\psi}$. Notice also that the datum of two admissible angles  $\beta_1$ and $\beta_2$ uniquely determines the harmonic function whose gradient at the boundaries forms angles as above. 
Hence, $\bra \nabla h_{\beta,\beta_1} (y), n^\flat\ket=
\|y\|^\beta\beta\sin(\psi_\flat-\beta_\flat)$, for $y\in \partial_\flat\BbY$ and 
$\flat=1,2$. 

Let now $g:\BbR_+\rightarrow \BbR_+$ be a $C^3$ function and $h=h_{\beta,\beta_1}$ an harmonic function that remains positive in $\Phi(\BbR^2_+)$.  On denoting $y=\Phi x$,  abbreviating $ \Xi:=
g(h(\Phi \tilde{\xi}_{n+1}))-g(h(\Phi \tilde{\xi}_{n}))$, 
and using the fact that $h$ is harmonic, we get
\begin{eqnarray*}
\BbE(\Xi|\tilde{\xi}_{n}=x)&=&
g'(h(y)) \BbE(\bra \nabla h(y), \Phi D\ket|  \tilde{\xi}_{n}=x) \\
&&+\frac{g''(h(y))}{2}
 \BbE(\bra \nabla h(y), \Phi D\ket ^2|  \tilde{\xi}_{n}=x)\\
 &&+
 \frac{[g'(h(y))]^2}{2}
\BbE(\bra \Phi D, \Hess_{h} (y) \Phi D\ket| \tilde{\xi}_n=x)+ R_3\\
&=& 
g'(h(y)) \bra \nabla h(y), n(y)\ket + \frac{g''(h(y))}{2} \|\nabla h(y)\|^2+R_3(y),
\end{eqnarray*}
where $R_3$ is the remainder of the Taylor expansion.
The value of the  conditional increment depends on the position of $x$. If $x\in\partial_\flat \BbX$ the dominant term of the right hand side is 
$g'(h(y)) \bra \nabla h(y), n^\flat\ket$, while in the interior of the space, that term strictly vanishes because there $n(y)=0$; hence the dominant term becomes the term
$\frac{g''(h(y))}{2} \|\nabla h(y)\|^2$. 

\subsubsection{The model in continuous-time}

\begin{prop}
\label{prop-quad-moments}
Let $0<\chi=(\psi_1+\psi_2)/\psi$ (hence the chain is recurrent) and $A:=A_a=\{x\in\BbX: \|x\|\leq a\}$ for some $a>0$, and $\gamma_x=\cO(\|x\|^{2-\kappa})$; denote $p_0=\chi/\kappa$. Suppose further that $\textrm{id}\in\ell^{\rho}(\Gamma)$ for some $\rho>2$.
\begin{enumerate}
\item If $q<p_0$, then $\BbE_x(\tau_A^q)<\infty$.
\item If $q>p_0$, then $\BbE_x(\tau_A^q)=\infty$.
\end{enumerate}
\end{prop}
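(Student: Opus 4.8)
The plan is to transplant the discrete-time estimates of \cite{AspIasMen} to continuous time by feeding suitable powers of the harmonic Lyapunov function into theorem \ref{thm:moments}, the only genuinely new ingredient being the modulation by the clock rate $\gamma_x=\cO(\|x\|^{2-\kappa})$. First I would fix the harmonic function $h:=h_{\chi,\psi_1}$ with the distinguished reflection angles $\beta_\flat=\psi_\flat$, so that $\beta=(\beta_1+\beta_2)/\psi=\chi$ and the leading boundary push $\bra\nabla h,n^\flat\ket\propto\sin(\psi_\flat-\beta_\flat)$ cancels on each $\partial_\flat\BbX$. For a parameter $s>0$ I would then work with $f(x):=h(\Phi x)^{s}$, which satisfies $f\asymp\|x\|^{\mu}$ with $\mu:=\chi s$, hence $f\rightarrow\infty$. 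Using the drift expansion recorded before the proposition together with $\Lap h=0$ and condition R (guaranteed here by $\textrm{id}\in\ell^\rho(\Gamma)$ with $\rho>2$, which also secures the needed memberships $f^{p}\in\Dom_+(\Gamma)$), the mean drift is governed by the curvature term and, uniformly in $x$,
\[
m_{f^{p}}(x)\ \asymp\ \frac{sp(sp-1)}{2}\,h(\Phi x)^{sp-2}\|\nabla h(\Phi x)\|^{2}\ \asymp\ \frac{sp(sp-1)}{2}\,\|x\|^{\mu p-2},
\]
whence, multiplying by $\gamma_x$,
\[
\Gamma f^{p}(x)=\gamma_x\,m_{f^{p}}(x)\ \asymp\ \frac{sp(sp-1)}{2}\,\|x\|^{\mu p-\kappa}.
\]
The sign of this quantity is that of $sp-1$, and everything below reduces to choosing $\mu$ so that the exponent $\mu p-\kappa$ lines up with the target exponents of theorem \ref{thm:moments}.

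For statement 1 I would take $\mu=\kappa/2$ (i.e.\ $s=\kappa/(2\chi)$) and any $p$ with $0<sp<1$, i.e.\ $p<2\chi/\kappa$. Then $\Gamma f^{p}<0$, and since $2\mu=\kappa$ the exponent $\mu p-\kappa$ equals $\mu(p-2)$, the exponent of $f^{p-2}\asymp\|x\|^{\mu(p-2)}$; hence $\Gamma f^{p}(x)\leq-c\,f^{p-2}(x)$ outside a sufficiently large sublevel set, for a suitable $c>0$. Statement 1 of theorem \ref{thm:moments} then gives $\BbE_x(\tau_A^{q})<\infty$ for every $q<p/2$, and letting $p\uparrow 2\chi/\kappa$ yields all $q<\chi/\kappa=p_0$.

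For statement 2 I would instead take $\mu=\kappa$ (i.e.\ $s=\kappa/\chi$) and apply statement 2 of theorem \ref{thm:moments} with $g=f$ and $b=1$. With this choice $\Gamma f\asymp\|x\|^{\mu-\kappa}=\cO(1)$ is bounded below, so $\Gamma f\geq-c_1$; for any fixed $r>1$ the exponent of $\Gamma f^{r}$ is $\mu r-\kappa=\kappa(r-1)$, matching that of $f^{r-1}\asymp\|x\|^{\mu(r-1)}$, so $\Gamma f^{r}\leq c_2 f^{r-1}$; and $\Gamma f^{p}\geq0$ outside a finite set precisely when $sp>1$, i.e.\ $p>\chi/\kappa=p_0$. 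The four hypotheses being met, I obtain $\BbE_x(\tau_A^{q})=\infty$ for all $q>p$, and letting $p\downarrow p_0$ covers every $q>p_0$.

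Two points then finish the argument. The first is routine: theorem \ref{thm:moments} controls the passage time to a sublevel set $\sublev{f}{\alpha}$, which is only comparable to, not equal to, the ball $A=\{\|x\|\leq a\}$; since both are finite and the embedded chain is irreducible and recurrent, the existence and non-existence of $\BbE_x(\tau^{q})$ transfer between them by the strong Markov property. The second is the real difficulty and the crux of the proof: one must certify that the drift asymptotics above hold \emph{uniformly up to the boundary}. With $\beta_\flat=\psi_\flat$ the leading boundary term cancels, but the subleading boundary corrections are \emph{a priori} of the very same order $\|x\|^{\mu-2}$ as the interior curvature term, so controlling their sign and magnitude --- exactly as in the discrete-time treatment of \cite{AspIasMen}, through the reflection angles encoded in $\chi$ and the remainder condition R --- is the step that carries the weight of the whole proposition.
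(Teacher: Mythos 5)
Your overall strategy (harmonic Lyapunov functions of the squeezed wedge fed into theorem \ref{thm:moments}) is the paper's, but as written the proof does not close, and you have in fact put your finger on the gap in your last paragraph without filling it. By fixing the critical angles $\beta_\flat=\psi_\flat$ you make the first-order boundary term vanish identically, and on $\partial_\flat\BbX$ the drift of $h^{sp}$ is then a sum of second-order contributions --- among them $\tfrac12 g'(h)\,\BbE\bra\Phi D,\Hess_h \Phi D\ket$, which on the boundary is \emph{not} $\Lap h=0$ because the jump law there has a different covariance --- all of the \emph{same} order $\|x\|^{\chi sp-2}$ as the interior curvature term and of uncontrolled sign. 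Both inequalities you need are tight at exactly that order ($\Gamma f^p\le -cf^{p-2}$ in part~1, $\Gamma f^p\ge 0$ in part~2), so your displayed asymptotics for $m_{f^p}$ are simply not valid ``uniformly in $x$'', and no remainder estimate will make them so. The paper's proof circumvents this by working with off-critical functions $h_{\beta,\beta_1}$ with $\beta_\flat\ne\psi_\flat$: the surviving first-order boundary term is then of the strictly larger order $\|x\|^{\beta\eta p-1}$, has a definite sign governed by $\sin(\psi_\flat-\beta_\flat)$, and dominates every correction; the sharp exponent $p_0=\chi/\kappa$ is recovered only in the limit $\beta\to\chi$ (``optimising on the accessible values of $q$''), not by a single critical choice.

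For statement 2 there is a second, structural problem: the choice $g=f$ cannot work whichever angles you pick, which is exactly why the paper warns after theorem \ref{thm:moments} that the quadrant requires the full two-function version. If $f$ is critical, the hypothesis $\Gamma f^p\ge 0$ is the unverifiable boundary inequality above. If instead $f=h_{\beta,\beta_1}^{\eta}$ is off-critical so that the boundary push makes $f^p$ a genuine submartingale, then at a boundary point $\Gamma f^r\asymp\gamma_x\|x\|^{\beta\eta r-1}\asymp\|x\|^{\beta\eta r+1-\kappa}$, which overwhelms $f^{r-1}\asymp\|x\|^{\beta\eta(r-1)}$ as soon as $\kappa<1+\beta\eta$, so the hypothesis $\Gamma g^r\le c_2 g^{r-1}$ fails (or, at best, forces a non-sharp exponent). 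The two requirements are incompatible for one function: the paper therefore takes $g=h_{\chi,\psi_1}^{\eta}$ critical --- its conditions being one-sided bounds with slack, they tolerate the sign-indeterminate boundary corrections and yield the binding constraint $\chi\eta\le\kappa$, whence $p>1/\eta\ge\chi/\kappa$ --- together with an off-critical $f$ dominated by $g$ for the submartingale hypothesis. The peripheral parts of your argument (condition R, the passage from sublevel sets of $f$ to the ball $A$, and the limits $p\uparrow 2\chi/\kappa$ and $p\downarrow\chi/\kappa$) are fine.
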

\begin{proof} 
Consider the Lyapunov function $f(x) = h_{\beta,\beta_1}(x)^\eta$.
\begin{enumerate}
\item If $0<p\eta<1$ then $m_{f^p}(x)<0$. The condition 
$\Gamma f^p \leq -cf^{p-2}$ reads then 
$\gamma_x \geq C \frac{h_\beta^{p\eta-2\eta} (x)}
{h_\beta^{p\eta-2}\|\nabla h_\beta\|^2}=
C'\frac{\|x\|^{2\beta-2\beta\eta}}{\|x\|^{2\beta-2}}=C' \|x\|^{2-2\beta\eta}$
from which follows that $2\beta\eta>\kappa$. This inequality, combined with
$0<p\eta<1$, yields that  for all $q<\frac{\beta}{\kappa}<\frac{\chi}{\kappa}$, $\BbE_x(\tau_A^q)<\infty$.
Hence, on optimising on the accessible values of $q$ we obtain the value of $p_0$.
\item
We proceed similarly; we need however to use the full-fledged version of the statement 2 of theorem \ref{thm:moments}, with both the function $f$ and $g=h_{\chi,\psi_1}^\eta$. Then  $f(x)\leq C g(x)$ and 
\[[\eta\beta <\kappa \ \textrm{and}\ \ \eta p>1]
\Rightarrow [ \Gamma g\geq -\epsilon \ \textrm{and} \ \Gamma g^r \leq cg^{r-1} \textrm{for}\ r>1, \ \textrm{and} \ \Gamma f^p \geq 0].\]
Simultaneous verification of these inequalities yields $p_0=\chi/\kappa$.
\end{enumerate}
\end{proof}

Using again Lyapunov functions of the form $f=h_\beta^\eta$ we can show that the drift of the chain in the transient case can be controlled by two $0$-critically transient Lamperti processes in the variable $\|x\|$ that are uniformly comparable. We can thus show, using methods developed in \S \ref{ssec:lamperti} the following 

\begin{prop}
\label{prop-quad-explo}
Let  $\chi< 0$ (hence the chain is transient).
\begin{enumerate}
\item If there exist a constant $d_1>0$  and an arbitary integer $l>0$ such that 
$\gamma_x \geq d_1\|x\|L_l(\|x\|) \ln_{(l)}^{\kappa}\|x\|$, for some arbitrarily small $\kappa>0$, then the chain explodes.
\item If there exist a constant $d_2>0$ and an arbitary integer $l>0$ such that 
$\gamma_x \leq d_2\|x\|L_l(\|x\|)$, then the chain does not explode.
\end{enumerate}
\end{prop}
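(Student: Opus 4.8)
The plan is to reduce this two-dimensional problem to the one-dimensional critical Lamperti setting of \S\ref{ssec:lamperti} and then to transcribe, almost verbatim, the two arguments in the proof of Proposition \ref{prop:k-crit-explo} with the scalar variable $x$ replaced everywhere by the Euclidean norm $\|x\|$. The only genuinely planar work lies in establishing that reduction uniformly in the angular coordinate; once it is in place, the bounds on $\gamma_x$ feed directly into Theorems \ref{thm:explo} and \ref{thm:non-explo}.

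First I would fix a pair of admissible boundary angles $\beta_1,\beta_2\in{}]-\pi/2,\pi/2[$ and the associated exponent $\beta=(\beta_1+\beta_2)/\psi$, so that the harmonic function $h:=h_{\beta,\beta_1}$ is strictly positive on the squeezed wedge $\Phi(\BbR_+^2)$. Using the Taylor expansion displayed at the end of \S\ref{ssec:quadrant} together with $\Lap_{h}=0$, the interior contribution to the increment of $g\circ h$ is governed solely by the curvature term $\frac{1}{2}g''(h)\|\nabla h\|^2$ (the first-order term drops out because $m(x)=0$ in the interior), while on each boundary $\partial_\flat\BbX$ it is governed by $g'(h)\,\|y\|^{\beta}\beta\sin(\psi_\flat-\beta_\flat)$. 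The moment hypothesis $\textrm{id}\in\ell^\rho(\Gamma)$ with $\rho>2$ makes the Taylor remainder $R_3$ negligible (condition~R). Combining the interior and boundary contributions and using $h(x)\asymp\|x\|^{\beta}$ with angular prefactors bounded away from $0$ and $\infty$, I would show that the radial process $\|\xi_t\|$ is squeezed between two $0$-critically transient Lamperti processes, i.e.\ that $\|x\|\,m_{\|\cdot\|}(x)\to\alpha_0$ and $v_{\|\cdot\|}(x)\to\beta_0$ with $C=\alpha_0/\beta_0>1/2$, the inequality $C>1/2$ being precisely the transience condition $\chi<0$. Crucially these estimates are uniform in the angular variable, so the bounds on $\gamma_x$, stated uniformly in $\|x\|$, transfer to the sandwiching one-dimensional processes.

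With this reduction secured, statement 1 follows by mimicking statement 1 of Proposition \ref{prop:k-crit-explo}: take the bounded, strictly positive Lyapunov function $F(x)=1/\ln_{(l)}^{\eta}(\|x\|)$ (modified over the finite set $\{\|x\|<x_0\}$, where $\gamma_x$ is bounded, so as to keep $\Gamma F\leq-\epsilon$ there as well), for which $m_F(x)\asymp-\big(\|x\|\,L_l(\|x\|)\ln_{(l)}^{\eta}\|x\|\big)^{-1}$ since $L_0(\|x\|)=\|x\|$. Choosing $\eta=\kappa$ and invoking $\gamma_x\geq d_1\|x\|L_l(\|x\|)\ln_{(l)}^{\kappa}\|x\|$ gives $\Gamma F(x)=\gamma_x m_F(x)\leq-\epsilon$ for all large $\|x\|$, whence explosion by Theorem \ref{thm:explo}. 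Symmetrically, statement 2 follows by mimicking statement 2 of that proposition: take $F(x)=\|x\|$, for which $m_F(x)=\alpha_0/\|x\|+o(1/\|x\|)$, so that $\gamma_x\leq d_2\|x\|L_l(\|x\|)$ yields $\Gamma F(x)\leq c\,L_l(\|x\|)=g(F(x))$ with $g=c\,L_l$; since $L_l$ is locally integrable while $\int^{\infty}dy/L_l(y)=\infty$ (its primitive being $\ln_{(l+1)}$), Theorem \ref{thm:non-explo} delivers conservativeness.

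The main obstacle is the uniform reduction itself, and specifically the control of the boundary contributions. Because the walk is reflected, the drift does not vanish on $\partial_\flat\BbX$, and one must check that the angle condition $\chi<0$ forces the \emph{net} radial behaviour to be $0$-critically transient \emph{uniformly in the angle}; this is exactly where the harmonic function, engineered to have a prescribed inward gradient at each boundary, and the moment hypothesis $\textrm{id}\in\ell^\rho(\Gamma)$, which suppresses the Taylor remainder through condition~R, do the essential work. Everything downstream of this uniform sandwiching is a routine transcription of the one-dimensional estimates of \S\ref{ssec:lamperti}.
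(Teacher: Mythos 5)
Your proposal is correct and follows exactly the route the paper takes: the paper's entire argument for Proposition \ref{prop-quad-explo} is the one-sentence remark that Lyapunov functions of the form $h_{\beta}^{\eta}$ let one sandwich the radial behaviour between two uniformly comparable $0$-critically transient Lamperti processes in $\|x\|$, after which the two bounds on $\gamma_x$ are fed into Proposition \ref{prop:k-crit-explo} (with $k=0$, so $L_0(\|x\|)=\|x\|$) via Theorems \ref{thm:explo} and \ref{thm:non-explo}. Your choices of Lyapunov function ($1/\ln_{(l)}^{\eta}\|x\|$ for explosion, $\|x\|$ with $g=cL_l$ for conservativeness) and your identification of the uniform angular control of the boundary terms as the only genuinely two-dimensional step are precisely what the paper intends.
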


With the help of similar arguments we can show the following 
\begin{prop}
\label{prop-quad-implo}
Let  $0<\chi$ (hence the chain is recurrent).
\begin{enumerate}
\item If there exist a constant $d_1>0$  and an arbitrary integer $l>0$ such that 
$\gamma_x \geq d_1\|x\|L_l(\|x\|) \ln_{(l)}^{\kappa}\|x\|$, for some arbitrarily small $\kappa>0$, then the chain implodes.
\item If there exist a constant $d_2>0$ and an arbitrary integer $l>0$ such that 
$\gamma_x \leq d_2\|x\|L_l(\|x\|)$, then the chain does not implode.
\end{enumerate}
\end{prop}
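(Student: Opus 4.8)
The plan is to transplant the proof of Proposition~\ref{prop:k-crit-implo} (the Lamperti case with $k=0$) onto the squeezed wedge $\BbY=\Phi(\BbX)$, using the critical harmonic function $h:=h_{\chi,\psi_1}$ as the basic Lyapunov building block. With the critical choice $\beta=\chi$, $\beta_1=\psi_1$, $\beta_2=\psi_2$ one has $\sin(\psi_\flat-\beta_\flat)=0$, so the boundary contribution $g'(h(y))\bra\nabla h(y),n^\flat\ket$ in the drift expansion of $F:=g\circ h$ vanishes identically. Consequently, for any $C^3$ function $g$ the effective average drift of $F$ is governed, outside a large sublevel set, by the interior curvature term $\frac12 g''(h(y))\|\nabla h(y)\|^2$ up to the Taylor remainder $R_3$. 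Using $h(y)\sim\|x\|^\chi$ and $\|\nabla h(y)\|^2\sim\|x\|^{2\chi-2}$, this exhibits $\|\xi_t\|$ as a $0$-critically recurrent Lamperti process in the variable $\|x\|$ (recall $\chi>0$ makes the embedded chain recurrent, as required by Theorem~\ref{thm:implo}), which is exactly why the growth rates $\gamma_x$ appearing here are the images under $x\mapsto\|x\|$ of those of Proposition~\ref{prop:k-crit-implo} with $k=0$, so that $L_0(\|x\|)=\|x\|$.

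For the first statement I would take the bounded Lyapunov function $F(x)=1-\ln_{(l)}^{-\eta}(h(x))$, which lies in $\Dom_+(\Gamma)$, tends to $1$, and has finite sublevel sets $\sublev{F}{a}$. Computing $g',g''$ for $g(u)=1-\ln_{(l)}^{-\eta}(u)$, feeding this into the expansion above, and using the iterated-logarithm asymptotics $L_l(h)\sim c\,\|x\|^{\chi-1}L_l(\|x\|)$ and $\ln_{(l)}(h)\sim\ln_{(l)}(\|x\|)$, one finds $m_F(x)\sim -\frac{\ln_{(l)}^{-\eta}(\|x\|)}{\|x\|\,L_l(\|x\|)}$. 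Hence $\Gamma F(x)=\gamma_x m_F(x)\leq -c\,\ln_{(l)}^{\kappa-\eta}(\|x\|)$ once $\gamma_x\geq d_1\|x\|L_l(\|x\|)\ln_{(l)}^{\kappa}(\|x\|)$; choosing $\eta\leq\kappa$ makes this $\leq-\epsilon$ outside a finite set, and the first statement of Theorem~\ref{thm:implo} delivers implosion towards every finite set.

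For the second statement I would instead use the unbounded function $F(x)=\ln_{(l+1)}^{\eta}(h(x))$, which tends to infinity. The same expansion with $\eta=1$ gives $m_F(x)\sim -\frac{1}{\|x\|L_l(\|x\|)}$, so that $\gamma_x\leq d_2\|x\|L_l(\|x\|)$ forces $\Gamma F(x)\geq-\epsilon$ outside a finite set; this is the binding requirement. For the complementary hypothesis one checks, with $g(u)=\ln_{(l+1)}^{r}(u)$ and $r>1$ small, that the concavity of $g$ yields $m_{F^r}(x)<0$ for large $\|x\|$, whence $\Gamma F^r(x)=\gamma_x m_{F^r}(x)<0\leq c F^{r-1}(x)$. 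Both requirements of the second statement of Theorem~\ref{thm:implo} being met, the chain does not implode towards $\sublev{F}{a}$, and by irreducibility (Proposition~\ref{pro:implosion-irreducibility}) not towards any finite set.

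The main obstacle is that $\|\xi_t\|$ is not literally a one-dimensional Markov chain, so the reduction to the $0$-critical Lamperti picture must be carried out at the level of Lyapunov functions on $\BbY$. Concretely, one must verify that the interior curvature term $\frac12 g''(h)\|\nabla h\|^2$ genuinely dominates, uniformly on $\{\|x\|\geq x_0\}$, both the vanishing boundary term and the remainder $R_3$; this is precisely where the remainder condition~R of Definition~\ref{def:R} enters, and it is guaranteed here by the standing moment hypothesis $\textrm{id}\in\ell^{\rho}(\Gamma)$ with $\rho>2$ (equivalently $\BbE(\|D\|^{2+\delta_0})<\infty$). The remaining bookkeeping, namely the asymptotics relating $L_l(h)$ to $L_l(\|x\|)$ and the elementary monotonicity in $\eta$, is routine, exactly as in the proof of Proposition~\ref{prop:k-crit-implo}.
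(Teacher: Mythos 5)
Your overall strategy --- compose iterated-logarithm scale functions with a harmonic function of the squeezed wedge so as to reduce to the $0$-critical Lamperti picture in the variable $\|x\|$, and then invoke the two halves of Theorem \ref{thm:implo} --- is the one the paper intends (its own ``proof'' is a single sentence pointing back to \S\ref{ssec:lamperti}), and your interior asymptotics for $m_F$ are correct. There is, however, a genuine gap in your first statement: by taking exactly the critical harmonic function $h=h_{\chi,\psi_1}$ you make the first-order boundary term $g'(h(y))\bra\nabla h(y),n^\flat\ket$ vanish identically, but the surviving second-order boundary contributions neither vanish nor have a controlled sign. On $\partial_\flat\BbX$ the term $\frac{g'(h)}{2}\BbE(\bra\Phi D,\Hess_h(y)\,\Phi D\ket)$ is a pairing of $\Hess_h(y)$ with the boundary second-moment matrix $\Sigma^\flat$; since $h$ is harmonic only for the \emph{interior} covariance (the identity after $\Phi$), this pairing need not vanish, and $\Hess_h$ being traceless has eigenvalues of both signs, so the term has uncontrolled sign. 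It is of order $\|x\|^{\chi-2}g'(h)$, i.e.\ exactly the same order as the interior curvature term $\frac{1}{2}g''(h)\|\nabla h\|^2$ you rely on (recall $|g''(h)|\sim g'(h)/h$). Hence on the infinitely many boundary sites $m_F$ may be positive, and after multiplication by the large $\gamma_x$ the required pointwise bound $\Gamma F\leq -\epsilon$ can fail there, so Theorem \ref{thm:implo} cannot be applied as you state; your claim that the curvature term ``genuinely dominates, uniformly on $\{\|x\|\geq x_0\}$'' is precisely what breaks down on the boundary rays.

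The standard repair --- implicit in the paper's keeping $\beta,\beta_1$ free in Proposition \ref{prop-quad-moments} and explicit in \cite{AspIasMen} --- is to take $h=h_{\beta,\beta_1}$ with $\beta_\flat=\psi_\flat+\delta$ for a small $\delta>0$, so that $\bra\nabla h(y),n^\flat\ket\propto-\beta\sin\delta<0$: the first-order boundary term is then strictly negative and larger by a factor $\|x\|$ than every second-order boundary term, while the interior estimate $m_F\sim -c\,\ln_{(l)}^{-\eta}(\|x\|)/(\|x\|L_l(\|x\|))$ is unchanged up to constants since $\beta$ enters it only through multiplicative constants. With this modification your verification ($\eta\leq\kappa$, then Theorem \ref{thm:implo} and Proposition \ref{pro:implosion-irreducibility}) goes through. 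Your second statement essentially survives the boundary issue, because both inequalities required there are one-sided and the uncontrolled boundary contributions, once multiplied by $\gamma_x\leq d_2\|x\|L_l(\|x\|)$, are $\cO(1/\ln_{(l+1)}\|x\|)$ for $\Gamma F$ and $\cO(\ln_{(l+1)}^{r-2}\|x\|)\leq cF^{r-1}$ for $\Gamma F^r$; but this should be checked explicitly rather than inferred from a domination that does not hold pointwise.
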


\subsection{Collection of one-dimensional complexes}
\label{ssec:fingers}
We introduce some  simple models to illustrate two phenomena:
\begin{itemize}
\item it is possible to have $0<\BbP_x(\zeta<\infty)<1$,
\item it is possible to have $\BbP_x(\zeta=\infty)=0$ and $\BbE_x\zeta=\infty$.
\end{itemize}

The simplest situation corresponds to a continuous-time Markov chain whose embedded chain is a simple  transient random walk on $\BbX=\BbZ$ with non trivial tail boundary. For instance, choose some $p\in]1/2,1[$ and  transition matrix
\[P_{xy}= \left\{\begin{array}{ll} 
1/2 & \textrm{if}\ x=0, y=x\pm1;\\
p & \textrm{if}\ x>0, y=x+1\ \textrm{or}\ x<0, y=x-1;\\
1-p & \textrm{if}\ x>0, y=x-1\ \textrm{or}\ x<0, y=x+1;\\
0 & \textrm{otherwise.}
\end{array}\right.\]
Then, for every $x\ne 0$, $0<\BbP_x(\tau_0 = \infty)<1$.
Suppose now that $\gamma_x=c$ for $x<0$ while there exists a sufficiently large integer $l\geq 0$ and an arbitrarily small $\delta >0$ such that
$\gamma_x \geq c L_l(x) \ln_{(l)}^\delta x$ for $x\geq x_0$. Then, using theorem \ref{thm:cond-explo}, we establish that
$\BbE_x(\zeta| \tau_{\BbZ_-}>\zeta) <\infty$ for all $x>0$ while 
 $\BbP_x(\zeta=\infty| \tau_{\BbZ_+}=\infty) =1$ for all $x<0$. This result combined with irreducibility of the chain leads to the conclusion:
 $0<\BbP_x(\zeta<\infty)<1$ for all $x\in\BbX.$ 

It is worth noting that bending the axis $\BbZ$ at 0 allows considering the state space  as the gluing of two one-dimensional complexes $\BbX_2=\{0\}\cup\BbN\times\{-,+\}$; every point $x\in\BbZ\setminus\{0\}$ is now represented as $x=(|x|, \sgn(x))$. This construction can be generalised by  gluing  a denumerably infinite family of one-dimensional complexes through a particular point $o$ and introducing the state space $\BbX_\infty=\{o\}\cup\BbN\times\BbN$; every point $x\in\BbX_\infty\setminus\{o\}$ can be represented as 
$x=(x_1,x_2)$ with $x_1,x_2\in\BbN$.  

Let $(\xi_t)_{t\in[0,\infty[}$ be a continuous time Markov chain evolving on the state space $\BbX_\infty$. Its embedded (at the moments of jumps) chain $(\tilde{\xi}_n)_{n\in\BbN}$ has 
transition matrix given by
\[P_{xy}= \left\{\begin{array}{ll} 
\pi_{y_2} & \textrm{if}\ x=o, y=(1,y_2), y_2\in\BbN,\\
p & \textrm{if}\ x=(x_1,x_2), y=(x_1+1, x_2), x_1\geq 1, x_2\in\BbN,\\
1-p & \textrm{if}\  x=(x_1,x_2), y=(x_1-1, x_2), x_1>1, x_2\in\BbN,\\
1-p & \textrm{if}\  x=(1,x_2), n\in\BbN, y=o,\\
0 & \textrm{otherwise,}
\end{array}\right.\]
where $1/2<p<1$ and $\pi=(\pi_n)_{n\in\BbN}$ is a probability vector on $\BbN$, satisfying $\pi_n>0$ for all $n$. The chain is obviously irreducible and transient. 

The space $\BbX_\infty$ must be thought as a ``mock-tree'' since, for transient Markov chains,  it has a sufficiently rich boundary structure  without any of the complications of the homogeneous tree (the study on full-fledged trees is postponed in a subsequent publication). Suppose that for every $n\in\BbN$ there exist an integer $l_n\geq 0$,  a real $\delta_n>0$, and a $K_n>0$ such that 
for $x=(x_1,x_2)\in\BbN\times\BbN$ for $x_1$ large enough, $\gamma_{x}$ satisfies  $\gamma_{(x_1,x_2)}=K_{x_2}\cO( L_{l_{x_2}}(x_1) \ln^{\delta_{x_2}}_{(l_{x_2})} x_1)$.

By applying theorem  \ref{thm:cond-explo}, we establish that
$\mathcal{Z}_{x_2}:=\BbE_{(x_1,x_2)}(\zeta| \tau_{o}=\infty) <\infty$ for all $x_1>0$ and all $x_2\in\BbN$, hence $\BbP_{(x_1,x_2)}(\zeta=\infty| \tau_{o}=\infty) =0$. Irreducibility 
implies then that $\BbP_{o}(\zeta=\infty) =0$.
However, 
\[\BbE_o (\zeta) \geq \sum_{x_2\in\BbN} \pi_{x_2} \BbE_{(1,x_2)}(\zeta|  \tau_{o}=\infty)\BbP_{(1,x_2)}(\tau_{o}=\infty)=
\frac{2p-1}{p} \sum_{x_2\in\BbN} \pi_{x_2} \mathcal{Z}_{x_2}.\] 
Since the  sequences 
$(l_n)_n$, $(\delta_n)_n$, and $(K_n)_n$ are totally arbitrary, while the
positive sequence $(\pi_n)_n$ must solely satisfy the probability constraint
$\sum_{n\in\BbN} \pi_n=1$,  all possible behaviour for $\BbE_o\zeta$ can occur. In particular, we can choose, for all $n\in\BbN$,  $l_n=0$ and $\delta_n=1$; this choice  gives  $\gamma_{(x_1, n)} = K_n\cO(\frac{1}{x_1^2})$ for every $n$ and for large $x_1$, leading
to the estimate $\mathcal{Z}_n\geq CK_n$, for all $n$. 

Choosing now, for instance, $\pi_n=\cO(1/n^2)$ and $K_n=\cO(n)$ for large $n$, we get 
\[\BbP_{o}(\zeta=\infty) =0\  \textrm{and}\ \BbE_o\zeta=\infty.\] 
 
This remark leads naturally to the  question whether for transient exploding chains with non-trivial tail boundary, there exists some critical $q>0$ such that
$\BbE (\zeta^p)<\infty$ for $p<q$ while $\BbE (\zeta^p)=\infty$ for $p>q$. Such models include continuous time random walks on the homogeneous tree and more generally on non-amenable groups.
These questions are currently under investigation and are postponed to a subsequent publication.

\cut{
\subsection{Random walk on a family of infinitely decorated lattices }
\label{ssec:truncated}
We intend now to introduce a model where implosion occurs in the sense of definition \ref{def:implo} but where  the expectation $\BbE_x(\tau_A)$ is finite but not uniformly bounded. 
The model will be constructed by decorating every point of $\BbN$ by a truncated version of the $\BbX_\infty$ space introduced in the \S \ref{ssec:fingers}.

For any $M\in\BbN$, consider $\BbY_M=\{o\}\cup\{1,\ldots,M\}\times \BbN$
(the set $\BbY_M$ is obtained from $\BbX_\infty$ by ``haircutting'' all one-dimensional complexes at size $M$). We denote $\partial\BbY_M=
\{M\}\times \BbN\simeq \BbN$, the set of endpoints of $\BbY_M$.
Since in \S \ref{ssec:fingers} we showed that $\BbE_x(\zeta|\tau_o=\infty)\leq \BbE_o(\zeta|\tau_o=\infty)<\infty$, it follows that if we keep the same transition probabilities and $\gamma$ inside $\BbY_M$, we shall have $\BbE(\tau_{\partial \BbY_M}|\tau_o=\infty)<\infty$. 

We construct now the state space of the Markov chain as
$\BbX=\{\star\}\cup \oplus_{k\in\BbN} \BbY_k$. On every point $k\in\BbN$, we have placed a decoration $\BbY_k$, therefore, the gluing points of $\BbY_k$ are now distinguished by attaching them a label $o_k$, $k\in\BbN$.
In the interior of every  decoration $\BbY_k$ we keep the same transition matrix as in \S \ref{ssec:fingers}; when the chain reaches a point of $\partial \BbY_k$, it jumps with probability 1 to $\star$. 
More precisely, 
\[
P(x,y)= \left\{\begin{array}{ll}
1 & \textrm{if}\ x=\star, y=o_1\\
1/2-\epsilon/2 & \textrm{if}\ x=o_k, y=o_{k\pm 1}, k\in\BbN\\
\epsilon \pi_l & \textrm{if}\ x=o_k, y=(k;1, y_2), k\in\BbN, y_2\in\BbN\\
p & \textrm{if}\ x=(k; x_1,x_2), y=(k;x_1+1, x_2), 1\leq x_1<k, x_2\in\BbN,\\
1-p & \textrm{if}\  x=(k; x_1,_2)\, y=(k; x_1-1, x_2), 1<x_1<k, x_2\in\BbN,\\
1-p & \textrm{if}\  x=(k;1,x_2), x_2\in\BbN, y=o_k,\\
1 & \textrm{if}\ x=(k,k,x_2), x_2\in\BbN, y=\star,\\
0& \textrm{otherwise,}
\end{array}\right.
\]
with $p$ and $(\pi_n)$ as in \S \ref{ssec:fingers} and $0<\epsilon<1$.
Now, it becomes evident that for every $\delta>0$ there exists $K>0$ such that for all $x\in\BbX$, we have $\BbP_x(\tau_\star>K)\leq \delta$ because the time needed to reach $\star$ from any $x$ is necessarily less than the time needed for the chain of the previous subsection (in $\BbX_\infty$) to explode, hence the uniformity of the bound. On the other hand, for $x=(k;l,n)$ with $1\leq l \leq k$, we have
$\BbE_x(\tau_\star|\tau_{o_k}=\infty)<\infty$, hence $\BbE_x(\tau_\star)<\infty$.

The set $\BbX$ plays again the role of a ``mock-graph'' reminiscent of the situation on a homogeneous tree with points of given levels wired to the neutral element. 
}

\bibliographystyle{plain}
%\frenchspacing
\scriptsize{
\bibliography{rwre,matrix,petritis}
}

\end{document}